\newcommand*\patchAmsMathEnvironmentForLineno[1]{%
  \expandafter\let\csname old#1\expandafter\endcsname\csname #1\endcsname
  \expandafter\let\csname oldend#1\expandafter\endcsname\csname end#1\endcsname
  \renewenvironment{#1}%
     {\linenomath\csname old#1\endcsname}%
     {\csname oldend#1\endcsname\endlinenomath}}%
\newcommand*\patchBothAmsMathEnvironmentsForLineno[1]{%
  \patchAmsMathEnvironmentForLineno{#1}%
  \patchAmsMathEnvironmentForLineno{#1*}}%
\newtheorem{thm}{Theorem}[section] 
\newtheorem{defn}[thm]{Definition} 
\newtheorem{prop}[thm]{Proposition}
\newtheorem{cor}[thm]{Corollary}
\newtheorem{lem}[thm]{Lemma}
\newtheorem{rem}[thm]{Remark}
\def\Gal{{\rm Gal}}
\def\Hom{{\rm Hom}}
\def\Spec{{\rm Spec\,}}
\def\Sp{{\rm Spec}\,}
\def\A{{\mathbb A}}
\def\C{{\mathbb C}}
\def\F{{\mathbb F}}
\def\N{{\mathbb N}}
\def\P{{\mathbb P}}
\def\Q{{\mathbb Q}}
\def\R{{\mathbb R}}
\def\Z{{\mathbb Z}}
\def\cA{{\mathcal A}}
\def\cB{{\mathcal B}}
\def\cC{{\mathcal C}}
\def\cD{{\mathcal D}}
\def\cL{{\mathcal L}}
\def\cM{{\mathcal M}}
\def\cO{{\mathcal O}}
\def\cT{{\mathcal T}}
\def\cU{{\mathcal U}}
\def\cV{{\mathcal V}}
\def\qqq{\,,\quad~\forall}
\newcommand{\ie}{{\it i.e.\/}\ }
\newcommand{\eg}{{\it e.g.\/}\ }
\newcommand{\cf}{{\it cf.\/}\ }
\newcommand{\opcit}{{\it op.cit.\/}\ }
\newcommand{\resp}{{\it resp.\/}\ }
\newcommand{\har}{HC^{\rm ar}}
\def\dim{{\mbox{dim}}}
\def\Hom {{\mbox{Hom}}}
\newcommand{\aaa}{{\rm alg}}
\newcommand{\ann}{{\rm an}}
\newcommand{\sss}{{\rm sm}}
\newcommand{\rrr}{{{\rm real},\lambda}}
\def\ffp{\mathfrak{p}}
\def\ffq{\mathfrak{q}}
\def\ffm{\mathfrak{m}}
\newcommand{\lam}{\Lambda}
\newcommand{\aff}{{\rm aff}}
\renewcommand{\projlim}{\varprojlim}
\newcommand{\tot}{{\rm Tot}}
\begin{document}

\title{Cyclic homology, Serre's local factors and the $\lambda$-operations}
\author{Alain Connes}
\email{alain@connes.org}
\address{Coll\`ege de France,
3 rue d'Ulm, Paris F-75005 France\newline
I.H.E.S. and Ohio State University.}
\author{Caterina Consani}
\email{kc@math.jhu.edu}
\address{Department of Mathematics, The Johns Hopkins
University\newline Baltimore, MD 21218 USA.}
%
\classification{16E40, 11G40, 13D3, 11G09, 14F25.}
\keywords{Cyclic homology, Deligne cohomology, Gamma factors, complex L-function, motives.}
\thanks{The second author is partially supported by the NSF grant DMS 1069218
and would like to thank the Coll\`ege de France for some financial support. The first author thanks A. Abbes for discussions on the Appendix.
}

\begin{abstract}

 We show that for a smooth, projective variety $X$ defined over a number field $K$, cyclic homology with coefficients in the ring $\A_\infty=\prod_{\nu|\infty} K_\nu$,
provides the right theory to obtain, using the $\lambda$-operations,  Serre's
archimedean local factors of the complex L-function of $X$ as regularized determinants.
\end{abstract}

\maketitle

\tableofcontents  

\section{Introduction}
Cyclic cohomology was introduced and widely publicized by the first author of this paper in 1981 (\cf\cite{ac}) as an essential tool in noncommutative differential geometry (\cf\cite{ncg}, \cite{NCG}). In the context of algebraic geometry the dual theory, cyclic homology, which had been anticipated in the commutative case in \cite{rein}, was subsequently developed in a purely algebraic framework  by J.L. Loday--D. Quillen (\cf \cite{LQ}, \cite{Loday}),  B. Feigin--B. Tsygan (\cf \cite{Tsygan}, \cite{FT}), M. Karoubi (\cf\cite{Karoubi}), C.Hood--J. Jones (\cf\cite{HoodJones}) and in the more recent work of C. Weibel (\cf\cite{weibel}, \cite{Weibelcris}) to which we refer as a basic reference throughout this paper.\newline
In this article we show that for a smooth, projective variety $X$ defined over a number field $K$, cyclic homology with coefficients in the ring $\A_\infty=\prod_{\nu|\infty} K_\nu$,
provides the right theory to obtain, using the $\lambda$-operations,  Serre's
archimedean local factors of the complex L-function of $X$ as regularized determinants. In \cite{Deninger91}, C. Deninger made an attempt at constructing a cohomological theory that achieves a similar goal by introducing an archimedean (yet simplified) analogue of J.M. Fontaine's $p$-ring ${\bf B}_{{\rm dR}}$. It is evident that due to the infinite number of poles of the Gamma functions which enter in Serre's formula of  the archimedean local factors
one is required to look for an infinite dimensional cohomological theory (\cf\cite{Katiathesis}). Moreover, since the multiplicity of these poles is provided by the dimension (in a precise range) of the real Deligne cohomology of the complex \resp real variety $X_\nu=X\times_K K_\nu$ over $K_\nu$,  one regards this latter as a cohomological theory intimately related to the sought for ``archimedean cohomology''. As pointed out in \cite{Deninger91} there is however a mismatch happening at a {\em real} place $\nu$ and for odd Hodge weights $w$, between Deninger's proposed definition of the archimedean cohomology  and the real Deligne cohomology of $X_\nu=X_{/\R}$.
 In this paper we show that for $\nu\vert\infty$ the cyclic homology of $X_\nu$ provides a conceptual general construction of the sought for archimedean cohomology of $X$ in terms of what we define (\cf Definition \ref{realhar}) as {\em archimedean cyclic homology} $\har_*(X_\nu)$.\newline
 Cyclic homology is naturally  infinite dimensional, it solves the above mismatch with the real Deligne cohomology and at the same time also unveils the subtle nature of the operator $\Theta$, as generator of the $\lambda$-operations, whose regularized determinant yields the archimedean local factors.
  The endomorphism $\Theta$ has two constituents: the natural grading in cyclic homology and the action of the multiplicative semigroup
$\N^\times$ on cyclic homology of {\em commutative} algebras given by the $\lambda$-operations $\Lambda(k)$, $k\in \N^\times$. More precisely, the action $u^\Theta$ of the multiplicative group $\R_+^\times$ generated by $\Theta$ on cyclic homology, is uniquely determined by its restriction to the dense subgroup $\Q_+^\times\subset \R_+^\times$ where it is  given by the formula
\begin{equation}\label{actiontheta}
    k^\Theta|_{HC_n(X_\nu)}=\Lambda(k)\,k^{-n} \qqq n\geq 0, \,  \ k\in \N^\times\subset \R_+^\times.
\end{equation}
Our main result is the following
 \begin{thm}\label{MT} Let $X$ be a smooth, projective variety of dimension $d$ over an algebraic number field $K$ and let $\nu\vert\infty$ be an archimedean place of $K$.
Then, the action of the operator $\Theta$ on the archimedean cyclic homology of $X_\nu$ satisfies the following formula
\begin{equation}\label{dettheta0}
    \prod_{0\leq w \leq 2d} L_\nu(H^w(X),s)^{(-1)^{w+1}}=\frac{det_\infty(\frac{1}{2\pi}(s-\Theta))|_{\har_{\rm even}(X_\nu)}}{
    det_\infty(\frac{1}{2\pi}(s-\Theta))|_{\har_{\rm odd}(X_\nu)}},\qquad s\in\R.
\end{equation}
The left-hand side of \eqref{dettheta0} is the product of Serre's archimedean local factors of the complex $L$-function of $X$ (\cf\cite{Se3}). On the right-hand side, $det_\infty$ denotes the regularized determinant (\cf\eg\cite{RS},\cite{Deninger91}) and one sets $\har_{\rm even}(X_\nu)=\bigoplus_{n=2k\ge 0} \har_{n}(X_\nu)$, $\har_{\rm odd}(X_\nu)=\bigoplus_{n=2k+1\ge 1} \har_{n}(X_\nu)$.
\end{thm}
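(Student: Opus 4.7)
The plan is to reduce the regularized determinant identity \eqref{dettheta0} to Serre's explicit formula for the archimedean local factors by computing the spectrum of $\Theta$ on the Hodge pieces of the cyclic homology of $X_\nu$ and then converting regularized products into Gamma factors.

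\textbf{Step 1: Spectrum of $\Theta$.} For the smooth scheme $X_\nu$, the Hochschild--Kostant--Rosenberg theorem together with Weibel's computation of cyclic homology yields a canonical $\lambda$-decomposition
\[
HC_n(X_\nu) \;=\; \bigoplus_{i \ge 0} HC_n^{(i)}(X_\nu), \qquad HC_n^{(i)}(X_\nu) \;\cong\; \mathbb{H}^{2i-n}\bigl(X_\nu,\, \Omega^{<i}_{X_\nu/K_\nu}\bigr),
\]
on which $\Lambda(k)$ acts by $k^i$. By the defining formula \eqref{actiontheta}, $\Theta$ therefore acts on $HC_n^{(i)}(X_\nu)$ as the scalar $i - n$. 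Transporting this description through Definition \ref{realhar}, the first task is to identify the eigenspaces and multiplicities of $\Theta$ on $\har_*(X_\nu)$ in terms of the Hodge bigraded pieces $H^{p,q}(X_\nu)$ of de Rham cohomology, equipped at a real place with the action of the real Frobenius.

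\textbf{Step 2: Gamma factors from regularized determinants.} Apply the classical regularized-product identity
\[
\det\nolimits_\infty\!\left(\tfrac{s+n}{2\pi}\right)_{n \ge 0} \;=\; \Gamma_\C(s)^{-1}, \qquad \Gamma_\C(s) \;=\; (2\pi)^{-s}\Gamma(s),
\]
together with its $\Gamma_\R(s) = \pi^{-s/2}\Gamma(s/2)$ analogue. On a subspace of $\har_*(X_\nu)$ where $\Theta$ has ladder spectrum $\{-p, -p-1, -p-2, \dots\}$ with total multiplicity $m$, this produces the factor $\Gamma_\C(s-p)^{\pm m}$ (or its $\Gamma_\R$ counterpart at a real place), with sign governed by whether the ladder sits in $\har_{\rm even}$ or $\har_{\rm odd}$. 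Each of the two determinants in \eqref{dettheta0} thus decomposes as a finite product of Gamma factors indexed by weights $w$ and Hodge bidegrees $(p,q)$ with $p+q = w$.

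\textbf{Step 3: Matching Serre's formula.} Collecting the Gamma factors produced in Step 2, one should recover at a complex place Serre's formula
$$L_\nu(H^w(X),s) \;=\; \prod_{p+q=w} \Gamma_\C(s-\min(p,q))^{h^{p,q}},$$
and at a real place the corresponding expression involving the $\pm 1$ eigenspaces of the real Frobenius on the diagonal $H^{p,p}$. The alternating sign $(-1)^{w+1}$ in \eqref{dettheta0} comes out of the parity of the cyclic homological degree $n$ in which the $(p,q)$-piece of weight $w = p+q$ lives, via the splitting $\har_{\rm even} \oplus \har_{\rm odd}$.

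\textbf{Main obstacle.} The delicate step is the real place at odd Hodge weight, precisely the case where Deninger's earlier cohomological candidate exhibited the mismatch noted in the introduction. The crux is to show that the construction of $\har_*(X_\nu)$ pairs the $\lambda$-decomposition with the real structure (complex conjugation together with real Frobenius) so that the $+$ and $-$ eigenspaces on each $H^{p,p}$-piece contribute respectively $\Gamma_\R(s-p)$ and $\Gamma_\R(s-p+1)$ with the correct integer multiplicities. This compatibility is what selects $\har$ among possible archimedean cohomologies; once it is in place, \eqref{dettheta0} follows by assembling the universal regularized-product identities for $\Gamma_\C$ and $\Gamma_\R$.
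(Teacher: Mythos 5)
Your plan is sound in outline, but it stops short of proving the one step that carries the actual content of the theorem. Steps 2 and 3 are standard: the Lerch-type identities converting regularized determinants of ladder spectra into $\Gamma_\C$ and $\Gamma_\R$, and Serre's explicit formula, are off-the-shelf. The theorem lives entirely in the spectral identification of $\Theta$ on $\har_*(X_\nu)$ --- that is, in showing that the quasi-pullback complex of Definition \ref{realhar} (and, at a real place, its $\bar F_\infty$-fixed part from Definition \ref{realvar}) has finite-dimensional $\Theta_0$-eigenspaces whose dimensions reproduce exactly the exponents in Serre's formula, including the $\pm$-eigenspace bookkeeping on $H^{p,p}$ with its $(-1)^p$ twist. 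You explicitly defer this: in Step 1 the ``first task is to identify the eigenspaces and multiplicities,'' and your final paragraph names the real-place/odd-weight case as ``the crux \dots to show'' without an argument. Since this is precisely the point where Deninger's earlier candidate exhibited the mismatch recalled in the introduction, it cannot be treated as routine; as written, the proposal contains no proof of it, so there is a genuine gap. (There is also a small but consequential indexing slip in Step 1: $HC_n^{(i)}(X_\nu)\cong \mathbb H^{2i-n}(X_\nu,\Omega^{\le i})$, i.e.\ truncation $\Omega^{<i+1}$, not $\Omega^{<i}$; with your truncation the multiplicities would come out shifted.)

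For comparison, the paper closes this gap before Section \ref{sectmain}: Corollaries \ref{mainlemC} and \ref{mainlem} identify $\har_n(X_\nu)^{\Theta_0=j}$ with the real Deligne cohomology $H^{2j+1-n}_{\cD}(X_\nu,\R(j+1))$ for $(n,j)\in E_d$ and prove vanishing outside $E_d$, the real case being handled by the cyclic interpretation of $\bar F_\infty$. Granting that, the paper's proof of Theorem \ref{MT} never expands either side into explicit Gamma factors: it invokes Deninger's Proposition 2.1 to reduce \eqref{dettheta0} to equality of divisors, and then matches divisors using Beilinson's formula \eqref{ordpole} for the pole orders of $L_\nu(H^w(X),s)$ in terms of real Deligne cohomology, together with the change of variables of Lemma \ref{trivchange} and the parity identity $w\equiv n\ (\mathrm{mod}\ 2)$ which produces the sign $(-1)^{w+1}$. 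If you are willing to quote Corollaries \ref{mainlemC}/\ref{mainlem}, your route can be completed, but you would still have to either re-derive the Deligne-cohomology dimensions in terms of $h^{p,q}$ and $F_\infty$-eigenvalues to match Serre's formula directly, or switch to the divisor-matching argument; either way, the missing spectral identification must be supplied rather than asserted.
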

 By taking into account the fact that cyclic homology of a finite product of algebras (or disjoint union of schemes) is the direct sum of their cyclic homologies, \eqref{dettheta0} determines the required formula for the product of all archimedean local factors in terms of cyclic homology with coefficients in the ring $\A_\infty=\prod_{\nu|\infty} K_\nu$.

Let us now describe in some details the archimedean cyclic homology groups $\har_*(X_\nu)$ appearing on the right-side of \eqref{dettheta0}.
The nuance between the archimedean cyclic homology $\har_*(X_\nu)$  and ordinary cyclic homology $HC_*(X_\nu)$ as developed in the context of algebraic geometry, corresponds exactly to the difference between the real Deligne cohomology and reduced Deligne cohomology of $X_\nu$ \ie relative de Rham cohomology of $X_\nu$ (up to a shift of degrees). The cyclic homology groups $HC_*(X_\nu)$ have coefficients in $\C$ for a complex place $\nu$ of $K$, \resp in $\R$ for a real place, and in this context they play the role of relative de Rham cohomology. The {\em real} Betti cohomology  of a complex projective variety $X_\C$ is recovered in periodic cyclic homology theory
by the inclusion (\cf \S\ref{pch}, Proposition~\ref{lemperbis})
\begin{equation}\label{inclus}
HP_*(C^\infty(X_\sss,\R))\subset HP_*(C^\infty(X_\sss,\C))\cong HP_*(X_\C),
\end{equation}
where $X_\sss$ denotes the underlying smooth $C^\infty$ manifold. The periodic cyclic homology groups $HP_*(C^\infty(X_\sss,\R))$ and $HP_*(C^\infty(X_\sss,\C))$ are defined in topological terms in \cite{ncg}. Then the {\em real ($\lambda$)-twisted periodic cyclic homology} $HP_*^{\rrr}(X_\C)$ is defined as the hyper-cohomology of a complex of cochains (\cf Definition \ref{maptau}) described by the quasi-pullback of two  natural maps of complexes in periodic cyclic theory. The resulting map $\tau:HP_*^{\rrr}(X_\C)\to HP_*(X_\C)$ is essentially the  Tate twist $(2\pi i)^{\Theta_0}$ combined with the above inclusion \eqref{inclus}. Here, $\Theta_0$ denotes the generator of the $\lambda$-operations, \ie it differs from the above operator $\Theta$ by the grading in cyclic homology. Let us first consider the case when the  archimedean place  $\nu$   of $K$ is complex. Then, one obtains (\cf Proposition \ref{agree}) the short exact sequence (which determines the archimedean cyclic homology)
\begin{equation}\label{defnC}
   0\to HP_{*+2}^\rrr(X_\nu)\stackrel{S\circ \tau}{\to}  HC_*(X_\nu)\to \har_*(X_\nu)\to 0
\end{equation}
where the periodicity map $S$ implements here the link between  periodic cyclic homology and cyclic homology. If instead the archimedean place $\nu$ is real, one takes in the above construction the fixed points
  of the anti-linear conjugate Frobenius operator $\bar F_\infty$ acting on $HC_*(X_\nu\otimes\C)$ (\cf \S \ref{sectreal}). While the  exact sequence \eqref{defnC} suffices to define $\har_*(X_\nu)$, we also provide in Definition~\ref{realhar} the general construction of  a complex of cochains whose cohomology gives $\har_*(X_\nu)$. This complex is defined by the quasi-pullback of  two maps connecting the negative cyclic complex (\cf \cite{HoodJones}) \resp the real ($\lambda$)-twisted periodic  cyclic complex to the periodic cyclic complex.

Besides its natural clarity  at the conceptual level, this newly developed archimedean cyclic homology theory  has also the following qualities:

$1)$~It inherits by construction the rich structure of cyclic homology, such as the action of the periodicity operator $S$ (that has to be Tate twisted in the complex case and squared in the real case).\newline
$2)$~It is directly connected to algebraic $K$-theory and the $\gamma$-filtration by the regulator maps, thus it acquires naturally a role in the theory of motives.\newline
$3)$~The fundamental formula derived from \eqref{dettheta0} that gives the product of all archimedean local factors in terms of cyclic homology with coefficients in the ring $\A_\infty=\prod_{\nu|\infty} K_\nu$, is clearly of adelic nature and evidently suggests the study of a generalization of these results, by implementing the full ring $\A_K$ of ad\`eles as coefficients. Cyclic homology with coefficients in the number field $K$ should provide a natural lattice in the spirit of \cite{BK}.\newline
$4)$~When translated in terms of the logarithmic derivatives of the two sides, formula \eqref{dettheta0} is very suggestive of the existence of a global Lefschetz formula in cyclic homology.\newline
$5)$~At the conceptual level, cyclic homology is best understood as a way to embed the category of non-commutative $k$-algebras in the abelian category of $\Lambda$-modules (\cf\cite{CoExt}), where $\Lambda$ is a small category built from simplicial sets and cyclic groups. Any non-commutative algebra $\cA$ gives rise {\em canonically}, through its tensor powers $\cA^{\otimes^n}$, to a $\Lambda$-module $\cA^\natural$. The functor
$\cA\mapsto \cA^\natural$ from the category of algebras to the abelian category of $\Lambda$-modules retains all the  information needed to compute the cyclic homology groups. In fact, since these groups are computed as ${\rm Tor}(k^\natural,\cA^\natural)$ the cyclic theory fits perfectly with the use of extensions in the theory of motives.\newline
$6)$~One knows (\cf\cite{CoExt}) that the classifying space of the small category $\Lambda$ is $\P^\infty(\C)$ and its cohomology accounts for the geometric meaning of the periodicity operator $S$. The immersion of the category of algebras in the abelian category of $\Lambda$-modules is refined {\em in the commutative case} by the presence of the $\lambda$-operations. As shown in \S 6.4 of \cite{Loday}, the presence of the $\lambda$-operations for a $\Lambda$-module $E:\Lambda^{\rm op}\to (k-{\rm Mod})$ is a consequence of the fact that $E$ factorizes through the category ${\rm \bf Fin}$ of finite sets. This clearly happens for $E=\cA^\natural$, when the algebra $\cA$ is commutative. We expect that a deeper understanding in the framework of algebraic topology, of the relations between the cyclic category $\Lambda$ and the category ${\rm \bf Fin}$ of finite sets should shed light on the structure of the ``absolute point" $\Spec(\F_1)$ and explain the role of cyclic homology and the $\lambda$-operations in the world of motives.\newline
The paper is organized as follows\newline
In Section \ref{sectreduced}, we recall the definition of the real Deligne cohomology and review briefly the result of \cite{Weibelcris} (\cf also \cite{FT}) which relate the Hodge filtration of the Betti cohomology of a smooth, projective variety over $\C$ to the cyclic homology of the associated scheme. This provides a direct link between the reduced Deligne cohomology (in the sense of \cite{Loday}) and the cyclic homology of the variety.\newline
In Section \ref{sectarch}, we first recall the well known formula describing the multiplicity of the poles of the archimedean local factors of $X$ as the rank of some real Deligne cohomology groups. This result leads unambiguously to the definition of the archimedean cohomology of a smooth, projective algebraic variety $X$ over a number field as an infinite direct sum of real Deligne cohomology groups. Then, we point out that by neglecting at first the nuance between the real Deligne cohomology and the reduced Deligne cohomology, this infinite direct sum is nothing but the cyclic homology group $ \oplus_{n\geq 0} HC_n(X_\C)$. \newline
The remaining sections are then dedicated to describe in cyclic terms  the difference between reduced and unreduced Deligne cohomology.\newline
In Section \ref{sectrealcyc} we use the computation of the cyclic homology of a smooth manifold (\cf \cite{ncg}, Theorem 46) and the stability property of the periodic cyclic homology when passing from the algebraic cyclic homology of a smooth complex projective variety $X_\C$ to the associated $C^\infty$-manifold $X_\sss$ to construct the real ($\lambda$)-twisted periodic cyclic homology $HP^\rrr_*(X_\C)$.\newline
The scheme theoretic relation between $X_\sss$ and $X_\C$ is explored in depth in Appendix \ref{sectweil} where we construct, by elaborating on the functor Weil restriction ${\rm Res}_{\C/\R}$, an additive functor $X\mapsto {}_*X$ which properly belongs to algebraic geometry over $\C$ (\ie maps schemes over $\C$ to schemes over $\C$). This allows one to extend the obvious map of locally ringed spaces $X_\sss\to X_\C$, to a {\em morphism of schemes} $\pi_X:\Spec(C^\infty(X_\sss,\C))\to X_\C$ that factors naturally through the complex scheme $({}_*X)_\C = {}_*X\times\C$
and to which the general theory of \cite{weibel} can be applied.\newline
In Section \ref{sectharcyc} we provide the definition and the first properties of the archimedean cyclic homology theory. In \S \ref{sectreal} we provide the cyclic homology meaning of  the anti-linear conjugate Frobenius operator $\bar F_\infty$ and develop the construction at the real places.\newline
Finally, Section \ref{sectmain} contains a detailed proof of Theorem \ref{MT}.

\section{The reduced Deligne cohomology and cyclic homology}\label{sectreduced}

In this section we shortly review the result of \cite{Weibelcris} from which our paper develops. This basic statement relates the Hodge filtration on the Betti cohomology of a smooth, projective variety over $\C$ to the cyclic homology of the associated scheme. Throughout this paper we follow the conventions of \cite{weibelbook} to denote the shift of the indices in chain $C_*$ \resp cochain complexes $C^*$ \ie
\begin{equation*}\label{shift}
    C[p]_n:=C_{n+p},\ \ C[p]^n:=C^{n-p}.
\end{equation*}
Note that this convention is the opposite of the one used in \cite{Schneider}.

\subsection{Reduced Deligne cohomology}\label{redD}

Let $X_{\C}$ be  a smooth, projective variety over the complex numbers.
One denotes by $\R(r)$ the subgroup $(2\pi i)^r\R$ of $\C$ ($i=\sqrt{-1}$) and by $\R(r)_\mathcal D$ the complex of sheaves of holomorphic differential forms $\Omega^\cdot=\Omega^\cdot_{X(\C)}$ on the complex manifold $X(\C)$ associated to $X_\C$, whose cohomology defines the real Deligne cohomology of $X_\C$:
\[
\R(r)_\mathcal D : ~\R(r)\stackrel{\epsilon}{\to}\Omega^0\stackrel{d}{\to}
\Omega^1_{}\stackrel{d}{\to}\Omega^1_{}\stackrel{d}{\to}\cdots\stackrel{d}{\to}\Omega^{r-1}_{}\to 0.
\]
Here $\R(r)$ is placed in degree $0$ and $\Omega^p_{}$ in degree $p+1$ ($\forall p\ge 0$) and the map $\epsilon$ is the inclusion of the twisted constants: $\R(r)\subset\C\subset\cO_{X(\C)}=\Omega^0_{X(\C)}$. The real Deligne cohomology of $X_{\C}$ is defined as  the hyper-cohomology  of the above complex: $H^n_{\mathcal D}(X_{\C},\R(r)) := \mathbb H^n(X(\C),\R(r)_\mathcal D)$. One has an evident short exact sequence of complexes
\begin{equation}\label{deligne0}
0\to (\Omega^{<r}_{X(\C)})[1]\to \R(r)_\mathcal D\to \R(r)\to 0.
\end{equation}
Following \cite{Loday} (\S 3.6.4), we call the {\em reduced Deligne complex} the kernel of the surjective map
$\R(r)_\mathcal D\to \R(r)$ in \eqref{deligne0}, that is the truncated de Rham complex shifted by $1$ to the right. The {\em reduced Deligne cohomology} is then defined as the hyper-cohomology of the reduced Deligne complex:
\begin{equation}\label{delignered}
\tilde H^n_{\mathcal D}(X_\C,\R(r)): =\mathbb H^n(X(\C),(\Omega^{<r}_{X(\C)})[1])= \mathbb H^n(\text{Cone}(F^r\Omega^\cdot_{X(\C)} \stackrel{\iota}{\to}\Omega^\cdot_{X(\C)})[1]).
\end{equation}
The second equality in \eqref{delignered} is an immediate consequence of the definition of the Hodge sub-complex $F^r\Omega^\cdot_{X(\C)}\stackrel{\iota}{\subseteq}\Omega^\cdot_{X(\C)}$. Up to a shift by $1$, the reduced  Deligne cohomology is sometimes referred to as relative de Rham cohomology. The results of \cite{GAGA} and \cite{Grothendieck} show that the reduced Deligne cohomology is computed in the same way in terms of the de Rham complex of algebraic differential forms on the scheme $X_\C$
and thus makes sense for schemes over $\C$.\newline
In Proposition \ref{propcyclic1} we shall review a generalization to the projective case of Proposition 3.6.5 of \cite{Loday} which relates, for a commutative and smooth $\C$-algebra $A$, the cyclic homology of $A$ with the reduced Deligne cohomology of the affine algebraic spectrum $\Spec(A)$. The algebraic analogue (\cf \cite{Loday}, Theorem 3.4.12) of the computation of cyclic cohomology of smooth manifolds of \cite{ncg} (Theorem 46), implies Proposition 3.6.5 of \cite{Loday}, \ie the isomorphism
\begin{equation*}
    HC_n(A)\cong \bigoplus_{i\geq 0}\tilde H^{n+1-2i}_{\mathcal D}(\Spec(A),\R(n+1-i)).
\end{equation*}
In the affine case, the reduced Deligne cohomology is computed in terms of global K\"{a}hler differentials.

\subsection{$\lambda$-decomposition}
 We recall from \cite{FT}, \cite{Loday} and \cite{Weibelcris}, that one has a natural action of the multiplicative semigroup $\N^\times$ of positive integers on the cyclic homology groups of a {\em commutative} algebra (over a ground ring). This action is the counterpart in cyclic homology of Quillen's  $\lambda$-operations in algebraic $K$-theory (\cf \cite{Loday} 4.5.16)
\begin{prop}\label{eulerdec}
Let $A$ be a commutative algebra and $(C_k(A)=A^{\otimes (k+1)},b,B)$ its mixed complex of chains. The $\lambda$-operations define (degree zero) endomorphisms $\lam_*$ of $C_*(A)$ commuting with the grading and satisfying the following properties\newline
-~ $\lam_{nm}=\lam_n\lam_m$,\quad $\forall n,m\in \N^\times$\newline
-~ $b\lam_m=\lam_m b$,\quad $\forall m\in \N^\times$\newline
-~$\lam_m B=mB\lam_m$,\quad $\forall m\in \N^\times$.
 \end{prop}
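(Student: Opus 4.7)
The plan is to reduce everything to the Hodge (or $\lam$-)decomposition of the mixed complex $(C_*(A),b,B)$, which is available precisely because $A$ is commutative (\cf \cite{Loday}, \S4.5). Concretely, I would first recall the mutually orthogonal Eulerian idempotents $e_n^{(i)}\in\Q[S_{n+1}]$ ($0\le i\le n$) acting on $C_n(A)=A^{\otimes(n+1)}$ by signed permutation of the tensor factors; they sum to the identity and produce a canonical splitting $C_n(A)=\bigoplus_i C_n^{(i)}(A)$. One then \emph{defines} the chain-level $\lam$-operations by
\begin{equation*}
\lam_m\vert_{C_n(A)}:=\sum_{i\ge 0} m^i\, e_n^{(i)},
\end{equation*}
which is manifestly a degree-zero endomorphism.

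With this definition the multiplicativity $\lam_{nm}=\lam_n\lam_m$ is immediate from the orthogonality $e_n^{(i)}e_n^{(j)}=\delta_{ij}e_n^{(i)}$:
\begin{equation*}
\lam_n\lam_m=\sum_{i,j} n^i m^j\, e_n^{(i)} e_n^{(j)}=\sum_i (nm)^i\, e_n^{(i)}=\lam_{nm}.
\end{equation*}
The commutation $b\lam_m=\lam_m b$ reduces to the standard compatibility $b\circ e_{n+1}^{(i)}=e_n^{(i)}\circ b$ (\cf \cite{Loday}, Proposition 4.6.4), essentially because the Hochschild differential respects the Hodge grading on a commutative algebra.

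The only non-formal step — and the true substance of the proposition — is the identity $\lam_m B=m B\lam_m$. It reduces to the claim that Connes' operator $B$ \emph{raises the Hodge weight by one}, \ie $B\circ e_n^{(i)}=e_{n+1}^{(i+1)}\circ B$. Granting this,
\begin{equation*}
\lam_m B=\sum_i m^i\, e_{n+1}^{(i)} B=\sum_i m^i\, B\, e_n^{(i-1)}=m\, B\sum_j m^j\, e_n^{(j)}=m B\lam_m.
\end{equation*}
To establish the weight shift one exploits the fact that in the commutative setting $(C_*(A),b,B)$ carries a commutative mixed DGA structure for the shuffle product, with $B$ a derivation of degree $+1$. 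The shuffle product has weight $0$ for the symmetric-group action underlying the idempotents, while the factorisation $B=(1-t)sN$ — and in particular the extra degeneracy $s$ — inserts a fresh tensor factor and thereby raises the weight by exactly one. This weight-shifting step is the main obstacle; it is carried out in \cite{Loday} \S4.6 and \cite{FT}, and everything else in the statement follows from it by the formal manipulations above.
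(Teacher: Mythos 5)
Your proposal is correct and is essentially the paper's own route: the paper disposes of the proposition by citing Loday (equation (4.5.4.2) for multiplicativity, Proposition 4.5.9 for commutation with $b$, Theorem 4.6.6 for $\lam_m B=mB\lam_m$), and your reduction --- orthogonality of the Eulerian idempotents, their compatibility with $b$, and the weight-raising property of $B$, which you yourself defer to Loday \S 4.6 and Feigin--Tsygan --- is precisely the content of those references, so the only genuinely hard step is outsourced in the same way. Two minor imprecisions worth fixing: the Eulerian idempotents live in $\mathbb{Q}[S_n]$ acting on the last $n$ tensor factors of $C_n(A)=A\otimes A^{\otimes n}$ (not in $\mathbb{Q}[S_{n+1}]$ on all factors), and defining $\lam_m=\sum_i m^i e_n^{(i)}$ requires a ground ring containing $\mathbb{Q}$, whereas Loday's $\lambda$-operations are defined integrally --- harmless for this paper, which works over $\R$ and $\C$.
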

 \proof The statements are all proven in  \cite{Loday}. The first is equation (4.5.4.2). The second is Proposition 4.5.9 and the third is  Theorem 4.6.6 of \opcit. \endproof

This construction gives rise, for a commutative algebra $A$ over a ground field of characteristic $0$, to a {\em canonical} decomposition, called the $\lambda$-decomposition, of the cyclic homology of $A$ as a direct sum
  \begin{equation*}\label{lambdadec}
    HC_n(A)=\bigoplus_{j\ge 0} HC_n^{(j)}(A)
  \end{equation*}
  which is uniquely determined as a diagonalization of the endomorphisms $\lam_m$ \ie
   \begin{equation*}\label{spec}
        \lam_m(\alpha)=m^j\,\alpha\qqq \alpha \in HC_n^{(j)}(A),\  m\in \N^\times.
      \end{equation*}

\subsection{Cyclic homology of smooth, projective varieties}

In \cite{Weibelcris} (Lemma 3.0) it is proven that the $\lambda$-decomposition extends to the projective case, thus for a smooth, projective algebraic variety $X_\C$ over the complex numbers one has
the finite decomposition
   \begin{equation*}\label{hcn}
    HC_n(X_\C)=\bigoplus_{j\geq 0} HC_n^{(j)}(X_\C).
   \end{equation*}
We recall the following key result of \cite{Weibelcris} (Theorem 3.3)
  \begin{prop}\label{propcyclic1}
Let  $X_\C$ be a  smooth, projective algebraic variety over $\C$.
Then one has canonical isomorphisms
\begin{equation}\label{cyclicequ1}
    HC_n^{(j)}(X_\C)
    \cong\tilde H^{2j+1-n}_{\mathcal D}(X_\C,\R(j+1))\cong H_B^{2j-n}(X(\C),\C)/F^{j+1}\qquad\forall j\geq 0,\forall n\geq 0.
\end{equation}
\end{prop}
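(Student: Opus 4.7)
The plan is to follow the approach of Weibel in \cite{Weibelcris}, which globalizes to smooth projective varieties the affine computation of Loday (Proposition~3.6.5 of \cite{Loday}) already recalled in Section~\ref{redD}. The starting point is the Hochschild--Kostant--Rosenberg theorem: for a smooth commutative $\C$-algebra $A$, there is a canonical isomorphism $HH_n(A)\cong\Omega^n_{A/\C}$ under which Connes' $B$-operator corresponds to the de Rham differential $d$. Running the mixed complex $(C_*(A),b,B)$ through this identification and decomposing by the $\lambda$-operations of Proposition~\ref{eulerdec} yields, weight by weight, the affine version
\begin{equation*}
HC_n^{(j)}(A)\cong\tilde H^{2j+1-n}_{\mathcal D}(\Spec(A),\R(j+1)).
\end{equation*}

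For $X_\C$ smooth and projective, I would first realize $HC_n(X_\C)$ as the Zariski hypercohomology of the sheafified mixed complex of $\cO_{X_\C}$. Smoothness makes the sheafified HKR theorem identify the Hochschild sheaves with $\Omega^\cdot_{X_\C/\C}$ and the sheafified $B$-operator with $d$. Since the $\lambda$-operations are functorial on commutative algebras, they act already on the presheaf level and descend compatibly through the local-to-global spectral sequence. In this way one obtains a decomposition $HC_n(X_\C)=\bigoplus_j HC_n^{(j)}(X_\C)$ together with
\begin{equation*}
HC_n^{(j)}(X_\C)\cong\mathbb H^{2j-n}(X_\C,\Omega^{<j+1}_{X_\C/\C}).
\end{equation*}
By the definition of the reduced Deligne complex as $(\Omega^{<j+1})[1]$, the right-hand side coincides with $\tilde H^{2j+1-n}_{\mathcal D}(X_\C,\R(j+1))$, giving the first isomorphism in \eqref{cyclicequ1}.

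For the second isomorphism I would pass to hypercohomology in the short exact sequence of complexes of sheaves
\begin{equation*}
0\to F^{j+1}\Omega^\cdot_{X_\C}\to\Omega^\cdot_{X_\C}\to\Omega^{<j+1}_{X_\C}\to 0
\end{equation*}
and invoke the degeneration at $E_1$ of the Hodge-to-de Rham spectral sequence to conclude that $\mathbb H^m(F^r\Omega^\cdot)$ injects into $H^m_{dR}(X_\C)$ with image the Hodge piece $F^rH^m_B(X(\C),\C)$ under the de Rham--Betti comparison. The resulting long exact sequence then yields
\begin{equation*}
\mathbb H^{2j-n}(\Omega^{<j+1})\cong H^{2j-n}_B(X(\C),\C)/F^{j+1},
\end{equation*}
completing the chain of identifications.

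The main technical obstacle is the projective globalization, namely showing that the $\lambda$-operations descend compatibly through the hypercohomology computation and that the spectral sequence collapses sufficiently on each $\lambda$-eigenspace to recover precisely the truncated de Rham complex; this is the content of Lemma~3.0 of \cite{Weibelcris} and is what enables Theorem~3.3 of \emph{op.\ cit.} The affine HKR step and the final Hodge-theoretic identification are by comparison essentially formal consequences of classical results.
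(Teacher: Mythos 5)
Your proposal is correct and follows essentially the same route as the paper: the key identification $HC_n^{(j)}(X_\C)\cong\mathbb H^{2j-n}(X_\C,\Omega^{\leq j}_{X_\C})$ is delegated (as in the text) to Lemma~3.0 and Theorem~3.3 of \cite{Weibelcris}, after which the first isomorphism is the definition \eqref{delignered} of reduced Deligne cohomology (together with the GAGA/Grothendieck comparison already recorded in \S\ref{redD}), and the second follows from Hodge-to-de Rham degeneration and the de Rham--Betti comparison, exactly as in the paper. Your extra sketch of the sheafified HKR step and the Hodge-filtration long exact sequence just fills in the internals of the cited results and the paper's two-line Hodge-theoretic remark.
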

In \cite{Weibelcris} (Theorem 3.3) it is proven that
 \begin{equation*}\label{weib}
   HC_n^{(j)}(X_\C)= \mathbb H^{2j-n}(X_\C,\Omega^{\leq j}_{X_\C})=\mathbb H^{2j-n}(X(\C),\Omega^{\leq j}_{X(\C)}).
 \end{equation*}
Moreover it follows from the degeneration of the ``Hodge to de Rham'' hyper-cohomology spectral sequence  and the canonical identification of de Rham cohomology with the singular cohomology that $\mathbb H^{2j-n}(X(\C),\Omega^{\leq j}_{X(\C)})\cong H_B^{2j-n}(X(\C),\C)/F^{j+1}$.

\section{Cyclic homology and archimedean cohomology}\label{sectarch}

 In this section we first recall the basic result which expresses the order of the poles of Serre's archimedean local factors $L_\nu(H^w(X),s)$ ($\nu\vert\infty$) of the complex L-function of a smooth, projective algebraic variety $X$ over a number field in terms of the ranks of some real Deligne cohomology groups of $X_\nu: = X\times_K K_\nu$ over $K_\nu$. The outcome leads unambiguously to consider an infinite direct sum of real Deligne cohomology groups of $X_\nu$ as the most natural candidate for the archimedean cohomology of $X$. If one ignores at first the nuance between real Deligne cohomology and reduced Deligne cohomology, this infinite direct sum (when $\nu$ is a complex place of $K$) is nothing but the cyclic homology direct sum
  $ \oplus_{n\geq 0} HC_n(X_\C)$. In the last part of the section we shall describe the strategy on how to understand  in full this difference.

\subsection{Deligne cohomology and poles of the archimedean factors}

Let $X$ be  a  smooth, projective variety over a number field $K$. We fix  an archimedean  place $\nu\vert\infty$ of $K$ and a Hodge weight $w$ of the singular cohomology of the complex manifold $X_\nu(\C)$ and consider the local factor $L_\nu(H^w(X),s)$. In view of the definition of these factors given by Serre (\cf \cite{Se3}) as a product of powers of shifted $\Gamma$-functions, these functions are completely specified by the multiplicities of their poles at some integer points on the real line. By a result of Beilinson (\cite{Beilinson}, \cf also \cite{Deninger91} and \cite{Schneider}) the order of these poles can be expressed in terms of the ranks of some real Deligne cohomology groups of $X_\nu$, they only occur at integers $s=m\leq \frac w2$ and their multiplicity is provided by the well-known formula
\begin{equation}\label{ordpole}
    \displaystyle{{\rm ord}_{s=m}}L_\nu(H^w(X),s)^{-1}={\rm dim}_\R H^{w+1}_{\cD}(X_{\nu},\R(w+1-m))
\end{equation}
where for a complex place $\nu$ the real Deligne cohomology groups of $X_\nu$ are defined as in \S \ref{redD} ($X_{\nu}=X_{\C}$) and for a real place $\nu$ ($X_\nu = X_{/\R}$), these groups are defined as
\begin{equation}\label{realrealdefn}
   H^q_{\cD}(X_{\nu},\R(p)):=H^q_{\cD}(X_\nu\otimes_\R\C,\R(p))^{\rm DR-conjugation}
\end{equation}
\ie the subspace of $H^q_{\cD}(X_\nu\otimes_\R\C,\R(p))$ made by fixed points of  the de Rham conjugation.

\subsection{Deligne cohomology and archimedean cohomology}

 Let us now consider the pairs $(w,m)$ of integers which enter in formula \eqref{ordpole}, when $\dim~ X=d$  (so that $w$ takes integer values between $0$ and $2d$). They give rise to the set (here we use $w=q$)
  \begin{equation}\label{ad}
 A_d=\{(q,m)\mid 0\leq q\leq 2d, \ m\leq q/2\}.
  \end{equation}
  The relevant infinite dimensional archimedean cohomology of $X_{\nu}$  is therefore dictated by \eqref{ordpole} as the infinite direct sum
\begin{equation}\label{relevcohom}
\bigoplus_{(q,m)\in A_d} H^{q+1}_{\cD}(X_{\nu},\R(q+1-m)).
\end{equation}
\begin{figure}
\begin{center}
\includegraphics[scale=0.6]{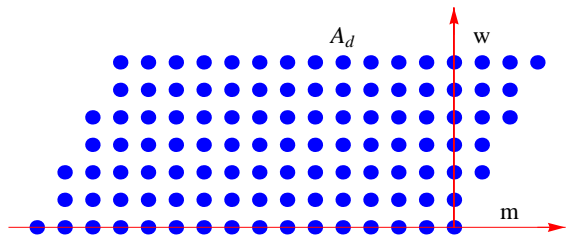}
\caption{The subset $A_d$ for $d=3$}\label{adsub}
\end{center}
\end{figure}
\begin{figure}
\begin{center}
\includegraphics[scale=0.6]{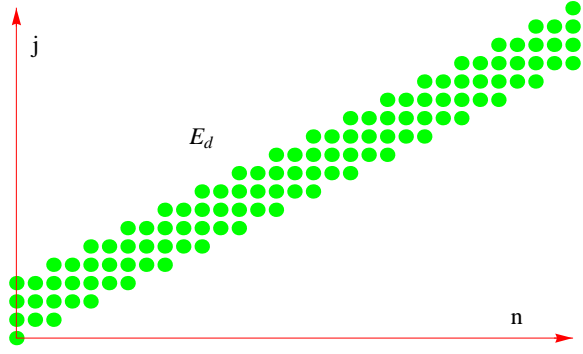}
\caption{The subset $E_d$ for $d=3$}\label{edsub}
\end{center}
\end{figure}
We shall now rewrite this sum in an equivalent way by using the following
\begin{lem}\label{trivchange}
Let $d\geq 0$ be an integer. The map of sets which maps the pair of integers $(n,j)$ to $(q,m)$ under the relations
\begin{equation*}\label{njqm}
    q=2j-n,\ \ m=j-n
\end{equation*}
is a bijection of $E_d=\{(n,j)\mid n\geq 0,\, 0\leq 2j-n\leq 2d\}$ with
$A_d$.
\end{lem}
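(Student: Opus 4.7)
The plan is to prove the bijection by exhibiting an explicit two-sided inverse and then checking, purely by linear arithmetic, that the defining inequalities of $E_d$ and $A_d$ transform into one another under the change of variables. First I would verify that the stated forward map sends $E_d$ into $A_d$: for $(n,j) \in E_d$ one has $q = 2j-n \in [0,2d]$ by the very definition of $E_d$, while a one-line computation gives $\tfrac{q}{2} - m = \tfrac{2j-n}{2} - (j-n) = \tfrac{n}{2} \ge 0$, so the condition $m \le q/2$ of $A_d$ is equivalent to $n \ge 0$.

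Next I would write down the candidate inverse, namely $(q,m) \mapsto (n,j) := (q-2m,\, q-m)$, obtained by solving the linear system $q = 2j-n$, $m = j-n$. The condition $m \le q/2$ defining $A_d$ is then precisely $n = q - 2m \ge 0$, and substituting gives $2j-n = q \in [0,2d]$, so this map sends $A_d$ into $E_d$. A direct substitution in both orders shows that the two compositions are the identity, so the map is a bijection.

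I do not expect any obstacle: the argument is a routine verification that the integral linear change of coordinates $(n,j) \leftrightarrow (q,m)$ sets up an affine equivalence between the two systems of inequalities. The only mild care needed is the observation that, because $n$ is an integer, the inequality $m \le q/2$ in $A_d$ (with $q$, $m$ integers) automatically corresponds to the integer constraint $n = q - 2m \ge 0$, so no parity issues arise.
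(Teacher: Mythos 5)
Your proposal is correct and follows essentially the same route as the paper: exhibit the explicit inverse $(q,m)\mapsto(q-2m,\,q-m)$ and verify by direct linear arithmetic that the defining inequalities of $E_d$ and $A_d$ correspond under the change of variables. Your check that $m\le q/2$ translates exactly to $n\ge 0$ (and conversely) matches the verification given in the paper.
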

\proof The inverse map sends $(q,m)\in A_d$ to $(n,j)$ where $n= -2 m+q,j= -m+q$. One easily checks that the conditions are preserved, in fact one has
\begin{equation*}
  n\geq 0, \, 0\leq 2j-n\leq 2d\implies
  \ j-n\leq (2j-n)/2
\end{equation*}
and
\begin{equation*}
   0\leq q\leq 2d, \ m\leq q/2\implies
  -2 m+q\geq 0, \, 0\leq 2(-m+q)-(-2 m+q)\leq 2d.
\end{equation*}
 \endproof

Thus by Lemma \ref{trivchange}, the sum in \eqref{relevcohom} can be equivalently rewritten as
\begin{equation}\label{change}
  \bigoplus_{(q,m)\in A_d} H^{q+1}_{\cD}(X_{\nu},\R(q+1-m))=\bigoplus_{(n,j)\in E_d}\, H^{2j+1-n}_{\mathcal D}(X_{\nu},\R(j+1)).
\end{equation}
Moreover, by applying the Proposition \ref{propcyclic1}, one obtains the isomorphism
\begin{equation}\label{change1}
\bigoplus_{n\geq 0} HC_n(X_\C)\cong\bigoplus_{(n,j)\in E_d}\,\tilde H^{2j+1-n}_{\mathcal D}(X_\C,\R(j+1)).
\end{equation}
It follows that the difference between the archimedean cohomology of $X_{\nu}$ as in \eqref{change} and the cyclic homology direct sum as in \eqref{change1} is expressed by the nuance between the real Deligne cohomology  and the reduced  Deligne cohomology of $X_\C$.

\subsection{Passing from $\tilde H^{*}_{\mathcal D}(X_\nu,\R(\cdot))$ to $H^{*}_{\mathcal D}(X_{\nu},\R(\cdot))$}\label{passingsect}

To understand the difference between reduced Deligne cohomology $\tilde H^{*}_{\mathcal D}(X_\nu,\R(\cdot))$  and real Deligne cohomology $H^{*}_{\mathcal D}(X_{\nu},\R(\cdot))$, we assume first that the place $\nu$ is complex, and introduce the long exact sequence associated to the short exact sequence \eqref{deligne0}. It is of the form
\begin{equation*}
\cdots\to H^w_B(X(\C),\R(r))\to \tilde H^{w+1}_{\mathcal D}(X_\C,\R(r))\to H^{w+1}_{\mathcal D}(X_{\C},\R(r)) \to H^{w+1}_B(X(\C),\R(r))\to\cdots
\end{equation*}
This long exact sequence can be equivalently written as
\begin{equation*}
\cdots\to H^w_B(X(\C),\R(r))\to  H_{{\rm dR},\text{rel}}^{w}(X_\C,r)\to H^{w+1}_{\mathcal D}(X_\C,\R(r)) \to H^{w+1}_B(X(\C),\R(r))\to\cdots
\end{equation*}
 in view of the isomorphism $\tilde H^{w+1}_{\mathcal D}(X_\C,\R(r))\cong H_{{\rm dR},\text{rel}}^w(X_\C,r):=H^w_{{\rm dR}}(X(\C))/F^r$. By comparing with \eqref{change}, we see that one has to estimate  $H^{w+1}_{\mathcal D}(X_\C,\R(r))$  for $(w,m)\in A_d$, with $A_d$ as in \eqref{ad} where $r=w+1-m$, \ie for $(w,w+1-r)\in A_d$.
One knows that for $w<2r$ the natural map
\begin{equation*}
    H^w_B(X(\C),\R(r))\to  H^w_{{\rm dR}}(X(\C))/F^r
\end{equation*}
is injective (in $H^w_{{\rm dR}}$ the intersection $F^r\cap \bar F^r=\{0\}$ when $2r>w$). For $w+1<2r$ one gets a short exact sequence of the form
\begin{equation}\label{seqshort}
 0\to H^w_B(X(\C),\R(r))\to  \tilde H^{w+1}_{\mathcal D}(X_\C,\R(r))\to H^{w+1}_{\mathcal D}(X_\C,\R(r)) \to 0
\end{equation}
This holds for the pair $(w,w+1-r)\in A_d$ since
\begin{equation*}
    (w,w+1-r)\in A_d\implies w+1-r\leq w/2 \implies w/2+1\leq r\implies w+1<2r.
\end{equation*}
Therefore,when the place $\nu$ is complex, the difference between the archimedean cohomology of $X_{\nu}$ as in \eqref{change} and the cyclic homology direct sum as in \eqref{change1} will be taken care of by a suitable interpretation in cyclic terms of the real Betti cohomology $H^{w}_B(X(\C),\R(r))$ and of the map  $H^w_B(X(\C),\R(r))\to  \tilde H^{w+1}_{\mathcal D}(X_\C,\R(r))$.
When instead the place $\nu$ of $K$ is real  the corresponding real Deligne cohomology groups are defined in \eqref{realrealdefn} as fixed points under the anti-linear de Rham conjugation $\bar F_\infty$. In \S \ref{sectreal} we shall show that  $\bar F_\infty$ admits a direct cyclic homology interpretation. Thus at a real place $\nu$ one has to further refine the above construction by taking the fixed points of the cyclic counterpart of $\bar F_\infty$. This is how the remaining part of the paper develops.

\section{Real ($\lambda$)-twisted cyclic homology of a smooth, projective variety}\label{sectrealcyc}

In this section we give the interpretation in cyclic terms of the real Betti cohomology and of the map  $H^w_B(X(\C),\R(r))\to  \tilde H^{w+1}_{\mathcal D}(X_\C,\R(r))$. We show that the computation of the cyclic homology of smooth manifolds (\cf \cite{ncg}, Theorem 46) jointly with the stability property of the periodic cyclic homology when passing from the algebraic cyclic homology of a smooth complex projective variety $X_\C$ to the associated $C^\infty$-manifold $X_\sss$ (using a natural morphism of schemes $\Spec(C^\infty(X_\sss,\C))\to X_\C$ whose definition is given in Appendix~\ref{sectweil}), give rise (with the implementation of a natural involution on the Frechet algebra $C^\infty(X_\sss,\C)$  and of a Tate twist) to a {\em real subspace} $HP^\rrr_*(X_\C)$ of the periodic cyclic homology of $X_\C$. This new structure is more precisely described by means of a map of {\em real graded vector spaces}
\begin{equation*}\label{realst}
HP^\rrr_*(X_\C)\stackrel{\tau}{\to} HP_*(X_\C)
\end{equation*}
which we deduce from a map of associated complexes.

\subsection{Hochschild and cyclic homology of schemes}

We first recall from \cite{weibel} the definition of Hochschild and cyclic homology of a scheme $X_k$ over a field $k$. We use the general conventions of \cite{weibel} and re-index a chain complex $\cC_*$ as a cochain complex by writing  $\cC^{-n}:=\cC_n$, $\forall n\in \Z$.

\subsubsection{Hochschild homology}

One lets $\cC_*^h$ be the sheafification of the chain complex of presheaves $U\mapsto C_*^h(\cO_{X_k}(U))=\cO_{X_k}(U)^{\otimes (*+1)}$, (the tensor products are over $k$), with the Hochschild boundary $b$ as differential. Then, one re-indexes this complex as a negative (unbounded) cochain complex \ie one lets $\cC^{-n}:=\cC_n^h$, $\forall n$, and one finally defines the Hochschild homology $HH_n(X_k)$ of $X_k$  as the Cartan-Eilenberg hypercohomology $\mathbb H^{-n}(X_k,\cC^{*})$
\[
HH_n(X_k) := \mathbb H^{-n}(X_k,\cC^{*}) = H^{-n}(\Gamma(\tot~ I^{**})).
\]
Here $I^{**}$ is an injective Cartan-Eilenberg resolution of  $\cC^{*}$ and $\tot$ is the total complex (using products rather than sums, \cf \cite{weibelbook}, 1.2.6 and \cite{weibel}, Appendix).
\begin{figure}
\begin{center}
\includegraphics[scale=0.6]{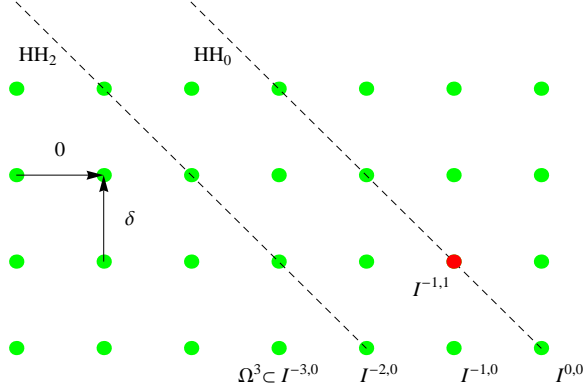}
\caption{Bi-complex describing the Hochschild homology of  $X_\C$, the vertical line $(I^{-m,*},\delta)$ with abscissa $-m$ is an injective resolution of $\Omega^m$. }\label{hochsheaf}
\end{center}
\end{figure}
We shall only work with algebras over $k=\C$ or $\R$ and in that case the $\lambda$-decomposition (for commutative algebras) provides a natural decomposition of $\cC_*$ as a direct sum of chain sub-complexes of the form
\begin{equation*}\label{ccc}
    \cC_*=\oplus\, \cC_*^{(i)}
\end{equation*}
and one then sets $HH_n^{(i)}(X_k) := \mathbb H^{-n}(X_k,\cC_{*}^{(i)})$.\newline
In the case of affine schemes $X_k=\Spec(A)$ for a $k$-algebra $A$,  one has the equality with Hochschild homology (\cf \cite{WG}, 4.1, and \cite{Weibelcris} Proposition 1.3)
\begin{equation*}\label{hochs}
    HH_n(X_k)=HH_n(A), \ \ HH_n^{(i)}(X_k)=HH_n^{(i)}(A).
\end{equation*}
Let now $X_\C$ be the scheme  over $\C$ associated to a smooth, projective complex (classical) algebraic variety $X_\aaa$. In this case even though the scheme $X_\C$ has more points than the variety $X_\aaa$ the categories of open sets are the same and so one can talk indifferently about sheaves over $X_\aaa$ or over $X_\C$. By applying \cite{Weibelcris} (Corollary 1.4) one has
\begin{equation*}\label{hochs1}
   HH_n^{(j)}(X_\C)\cong H^{j-n}(X_\C,\Omega_{X_\C}^j)\qqq j, n.
\end{equation*}
Figure \ref{hochsheaf} shows the corresponding simplified version of the Cartan-Eilenberg injective resolution of the Hochschild complex. Using the Hochschild-Kostant-Rosenberg theorem one simplifies the chain complex $\cC_*^h$  by replacing the sheaf $\cC_j^h$ with the sheaf $\Omega^j$ of algebraic differential forms of degree $j$ and the Hochschild boundary $b$ by $0$. One can then choose a
Cartan-Eilenberg injective resolution where the horizontal boundaries are $0$. The $\lambda$-decomposition is now read as the decomposition of the bi-complex in the sum of its vertical columns.

\subsubsection{Cyclic homology}\label{sectcychom}

 In order to recall the definition of the cyclic homology $HC_*(X_k)$ of a  scheme $X_k$ over a field $k$ given in \cite{Weibelcris}, we start by reviewing the following definition of cyclic homology of a mixed complex of sheaves $(\cM,b,B)$ on $X_k$
 \begin{equation*}\label{cycfirst}
    HC_n(\cM):=\mathbb H^{-n}(X_k,\tot \cB_{**}(\cM))
 \end{equation*}
 where $\cB_{**}(\cM)$ denotes the corresponding bi-complex of sheaves.
 One then applies this definition to the usual $(b,B)$-bicomplex, \ie to the mixed complex on $X_k$ where the $B$ co-boundary operator turns the sheafified Hochschild chain complex into a mixed complex of sheaves.

 In the case of an affine scheme $X_k=\Spec(A)$, for a $k$-algebra $A$ ($k=\C$ in our case),  one derives by applying the fundamental result of  \cite{weibel} (\cf Theorem 2.5) the equality with the usual cyclic cohomology
  \begin{equation*}\label{cycweib}
    HC_n(X_k)=HC_n(A), \ \ HC_n^{(i)}(X_k)=HC_n^{(i)}(A).
\end{equation*}
  Let now $X_\C$ be the scheme  over $\C$ associated to a smooth, projective complex (classical) algebraic variety $X_\aaa$. Next we describe the simplified version of the complex of sheaves
  $\tot \cB_{**}(\cM)$ using the afore-mentioned Hochschild-Kostant-Rosenberg theorem. This amounts to replace  the mixed complex $(\cM,b,B)$ of sheaves by the mixed complex $(\Omega_{X_\C}^*,0,d)$ where $d$ is the de Rham boundary (which corresponds to the coboundary operator $B$). The total complex of the $(b,B)$-bicomplex simplifies to the following chain complex of sheaves
  \begin{equation*}\label{chain}
   \cT_{m}=\bigoplus_{u\geq 0} \Omega_{X_\C}^{m-2u}, \ \
   (b+B)\left(\sum_{u\geq 0}\, \omega_{m-2u}\right)=\sum_{u\geq 1} \,d\omega_{m-2u}\in \cT_{m-1}.
  \end{equation*}
  As explained in \cite{Weibelcris} (Example 2.7),
  passing to the corresponding  cochain complex of sheaves indexed in negative degrees
    $\cT^{-n}:=\cT_{n}$ one obtains (using $j=-n-u$)
    \begin{equation*}
        \cT^n=\bigoplus_{ 0\leq j\leq - n} \Omega_{X_\C}^{2j+n}, \ \ (\cT^*,d)=
        \bigoplus_{ j\geq 0}(\Omega_{X_\C}^{2j+*},d)_{(j+*)\leq 0}=
        \bigoplus_{ j\geq 0}(\Omega_{X_\C}^{\leq j},d)[-2j]
    \end{equation*}
    which is the product of the truncated de Rham complexes $(\Omega_{X_\C}^{\leq j},d)$ shifted by $-2j$. Moreover this decomposition corresponds to the
    $\lambda$-decomposition of the mixed complex $(\cM,b,B)$.
    Using a Cartan-Eilenberg injective resolution $(A^{*,*};d,\delta)$ of the de Rham complex, one obtains a Cartan-Eilenberg injective resolution $(J^{*,*};d_1,d_2)$ of the cochain complex $\cT^*$ of the following form ($r,s\in\Z$)
\begin{equation}\label{jrs}
    J^{r,s}:=\bigoplus_{p\leq -r, \atop p\equiv r (2)} A^{p,s}=
    \bigoplus_{p\equiv r (2)} A^{p,s}/F^{-r+1}.
\end{equation}
This is the quotient of the strictly periodic bi-complex
$
(P^{r,s}=\bigoplus_{p\equiv r (2)} A^{p,s};d,\delta)
$
   \begin{figure}
\begin{center}
\includegraphics[scale=0.6]{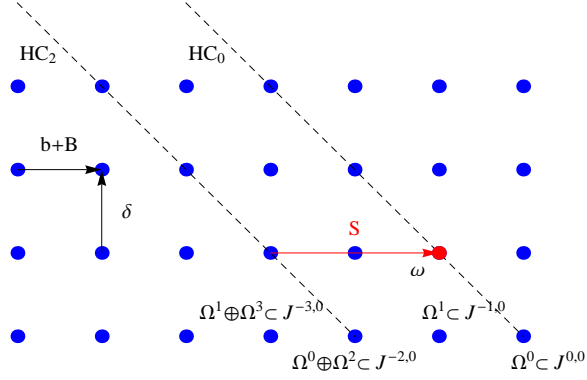}
\caption{Bicomplex describing the cyclic homology of  $X_\C$, and map $S$}\label{cycsheaf}
\end{center}
\end{figure}
by the sub-complex
\begin{equation}\label{jneg}
(N^{r,s}=\bigoplus_{p> -r, \atop p\equiv r (2)} A^{p,s};d,\delta)
 \end{equation}
 corresponding to the Hodge filtration.
The differentials in \eqref{jrs} are given by the truncation $d_1$ of the horizontal differential in the resolution $(A^{*,*};d,\delta)$ of the de Rham complex and by $d_2= \delta$. The $\lambda$-decomposition corresponds to the following decomposition of $J^{*,*}$
\begin{equation*}\label{lamdecj}
 J^{r,s}=\bigoplus_{j\geq 0} J^{r,s}(j), \ \    J^{r,s}(j)= \left\{
                                                              \begin{array}{ll}
                                                                A^{r+2j,s}, & \hbox{if} \  r+j\leq 0 ;\\
                                                                \{0\}, & \hbox{otherwise.}
                                                              \end{array}
                                                            \right.
\end{equation*}
Thus, since $r+j\leq 0\Longleftrightarrow r+2j\leq j$, $(J^{*,*}(j);d_1,d_2)$ provides a Cartan-Eilenberg resolution of the truncated de Rham complex $\Omega^{\leq j}_{X(\C)}$. By
    computing the hypercohomology, one obtains (\cf\cite{Weibelcris}, Theorem 3.3)
 \begin{equation*}\label{weibbis}
  HC_n^{(j)}(X_\C)= \mathbb H^{2j-n}(X_\C,\Omega^{\leq j}_{X_\C}) , \ \ HC_n(X_\C)=\prod_{j\in\Z} \mathbb H^{2j-n}(X_\C,\Omega^{\leq j}_{X_\C}) .
 \end{equation*}
 The periodicity operator $S$ acquires here a simple meaning as an endomorphism of the bi-complex $J^{*,*}$, as  Figure \ref{cycsheaf} shows. We shall describe this fact more in details in the next section.

\subsection{Resolution using differential forms of type $(p,q)$}\label{sectrespq}

Let $X_\C$ be a smooth, projective variety over $\C$. In Appendix \ref{sectweil} (\cf \S \ref{sectmorphis}), we define a canonical morphism of schemes $\pi_X:\Spec(C^\infty(X_\sss,\C))\to X_\C$. In order to understand the effect of $\pi_X$ on cyclic homology we use, as in \cite{Weibelcris}, the results of \cite{GAGA}
and \cite{Grothendieck} to pass from the Zariski to the analytic topology and then use the Dolbeault resolution as a soft resolution of the de Rham complex. Thus the general discussion as given in \S \ref{sectcychom} applies using for  $A^{p,q}$  the $\C$-vector space of smooth complex differential forms on $X(\C)$ of type $(p,q)$. Thus the cohomology $\mathbb H^{-n}$ of the following bi-complexes ($r,s\in\Z$)
computes the cyclic homologies $HC_n(X_\C)$ \resp $HC_n^{(j)}(X_\C)$
\begin{equation}\label{crs}
    C^{r,s}:=\bigoplus_{p\leq -r, \atop p\equiv r (2)} A^{p,s}, \qquad
C^{r,s}(j):= \left\{
                                                              \begin{array}{ll}
                                                                A^{r+2j,s}, & \hbox{if} \  r+j\leq 0 ;\\
                                                                \{0\}, & \hbox{otherwise.}
                                                              \end{array}
                                                            \right.
\end{equation}
The differentials are defined as follows: $d_1=\partial$ and $d_2=\bar \partial$, where $d_1=\partial$ is truncated
according to the identification of $C^{*,*}$ with the quotient of the strictly periodic bicomplex
by the sub-complex $N^{r,s}$ corresponding to the Hodge filtration as in \eqref{jneg}. The periodic bi-complex and its $\lambda$-decomposition are given by
\begin{equation}\label{perbis}
(P^{r,s}=\bigoplus_{p\equiv r (2)} A^{p,s};\partial,\bar\partial), \qquad  P^{r,s}(j)= A^{r+2j,s}\qqq r,s,j\in \Z.
 \end{equation}
 Next result shows an equivalent analytic way to describe the cyclic homology of $X_\C$. As usual, $T^*_\C$ denotes the complexified cotangent bundle on the manifold $X(\C)$
\begin{lem}\label{lemper}
Let $X_\C$ be a smooth complex projective  variety of dimension $m$. Then in degree $\leq -m$ the total complex ${\rm Tot}\,C^{*,*}$ of \eqref{crs} coincides with the complex of smooth differential forms of given parity on the associated complex manifold $X(\C)$
\begin{equation}\label{par}
   \cdots\stackrel{d}{\to} C^\infty(X(\C),\wedge^{\rm even}T^*_\C)\stackrel{d}{\to}
    C^\infty(X(\C),\wedge^{\rm odd}T^*_\C)\stackrel{d}{\to}\cdots
\end{equation}
where $d$ is the usual differential. The total complex ${\rm Tot}\,P^{*,*}$ of \eqref{perbis} coincides with \eqref{par}, and the total complex ${\rm Tot}\,P^{*,*}(j)=PC^*(j)$ is given by the sub-complex
\begin{equation}\label{parj}
   \cdots\stackrel{d}{\to} C^\infty(X(\C),\wedge^{*+2j}T^*_\C)\stackrel{d}{\to}
    C^\infty(X(\C),\wedge^{*+1+2j}T^*_\C)\stackrel{d}{\to}\cdots
\end{equation}
\end{lem}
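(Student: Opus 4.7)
The plan is to unwind the definitions and observe that everything reduces to grouping smooth $(p,q)$-forms by the parity of $p+q$ and to verifying that the truncation defining $C^{r,s}$ becomes vacuous in the stated range. First, I would compute the total degree $n$ component of $\tot P^{*,*}$:
\[
(\tot P^{*,*})_n \;=\; \bigoplus_{r+s=n} P^{r,s} \;=\; \bigoplus_{r+s=n}\ \bigoplus_{p \equiv r\,(2)} A^{p,s},
\]
and, after setting $r=n-s$, the congruence $p\equiv r\,(2)$ becomes $p+s\equiv n\,(2)$. Using $A^{p,q}=C^\infty(X(\C),\wedge^{p,q}T^*_\C)$ and the Hodge-type decomposition $\wedge^k T^*_\C=\bigoplus_{p+q=k}\wedge^{p,q}T^*_\C$, the above direct sum is exactly $C^\infty(X(\C),\wedge^{\varepsilon(n)}T^*_\C)$, where $\varepsilon(n)\in\{\mathrm{even},\mathrm{odd}\}$ matches the parity of $n$. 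Since $\partial$ and $\bar\partial$ anti-commute, the total differential of the bi-complex is $\partial+\bar\partial=d$, which proves the second assertion and identifies $\tot P^{*,*}$ with \eqref{par}.

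Next I would handle the truncated bicomplex $C^{*,*}$. The defining inequality $p\leq -r$, together with $r=n-s$, reads $n\leq s-p$. Since $0\leq p\leq m$ and $0\leq s\leq m$ the right-hand side is bounded below by $-m$, so for every $n\leq -m$ this inequality is automatically satisfied on every nonzero summand $A^{p,s}$ contributing to $\tot P^{*,*}_n$. Consequently the truncation removes nothing, and $\tot C^{*,*}_n=\tot P^{*,*}_n$ for $n\leq -m$, establishing the first assertion. The same inequality shows that the truncated horizontal differential $d_1$ agrees with $\partial$ on each of these summands, so the identification is one of complexes, not merely of graded vector spaces.

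Finally, for the $\lambda$-component I would use the explicit formula $P^{r,s}(j)=A^{r+2j,s}$ and substitute $p=r+2j$, $q=s$, giving
\[
(\tot P^{*,*}(j))_n \;=\; \bigoplus_{r+s=n} A^{r+2j,s} \;=\; \bigoplus_{p+q=n+2j} A^{p,q} \;=\; C^\infty(X(\C),\wedge^{n+2j}T^*_\C),
\]
which produces the displayed sub-complex with differential the restriction of $d=\partial+\bar\partial$. The argument is essentially a bookkeeping with bidegrees; the only point requiring genuine care is the verification that the Hodge truncation disappears in the range $n\leq -m$, which is precisely where the complex-dimension hypothesis $\dim X=m$ enters the picture.
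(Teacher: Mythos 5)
Your proposal is correct and follows essentially the same route as the paper's own proof: identify the degree-$n$ piece of the total complex as a sum of $(p,s)$-forms with $p+s\equiv n\;(\mathrm{mod}\;2)$, observe that the Hodge truncation $p\leq -r$ is automatic once the total degree is $\leq -m$ because $0\leq p,s\leq m$, and note that the total differential is $\partial+\bar\partial=d$, with the $\lambda$-component statement read off directly from $P^{r,s}(j)=A^{r+2j,s}$. No gaps.
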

\begin{figure}
\begin{center}
\includegraphics[scale=0.6]{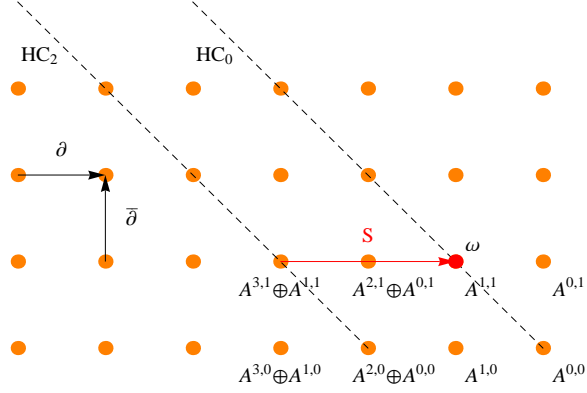}
\caption{The bi-complex $(C^{*,*},\partial,\bar\partial)$ computing the cyclic homology of $X_\C$}\label{cycforms}
\end{center}
\end{figure}
\proof The component of degree $k$ of the total complex ${\rm Tot}\,C^{*,*}$ is described by
\[
\bigoplus_{r+s=k} C^{r,s}=\bigoplus_{r+s=k} \bigoplus_{p\leq -r\atop p\equiv r (2)} A^{p,s}.
\]
If $k\leq -m$ then $r\leq -m$ and the condition $p\leq -r$ is automatic so that one derives
\[
\bigoplus_{r+s=k} C^{r,s}=\bigoplus_{r+s=k} \bigoplus_{ p\equiv r (2)} A^{p,s}
=\bigoplus_{p+s\equiv k (2)}A^{p,s}=C^\infty(X(\C),\wedge^{k ({\rm mod}\,2)} T^*_\C).
\]
Moreover the co-boundary operator of ${\rm Tot}\,C^{*,*}$ is $\partial+\bar\partial=d$ so the statement follows. The remaining statements follow from \eqref{perbis}.\endproof

 Next, we describe in more details the action of the periodicity map $S$ of cyclic homology on the bi-complex $(C^{*,*};\partial,\bar\partial)$ as in \eqref{crs}.
\begin{lem}\label{mapS}
The following map defines an endomorphism of degree $2$ of the bi-complex $(C^{*,*};\partial,\bar\partial)$
\begin{equation}\label{maps}
    S\omega =\omega/F^{-r-1}\in C^{r+2,s}\qqq  \omega \in C^{r,s}.
\end{equation}
\end{lem}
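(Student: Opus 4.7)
The plan is to realize $S$ as the descent to $C^{*,*}$ of a tautological shift-by-$(2,0)$ endomorphism of the strictly periodic bi-complex $P^{*,*}$ from \eqref{perbis}. The key observation is that the parity condition $p\equiv r\,({\rm mod}\,2)$ in the definition $P^{r,s}=\bigoplus_{p\equiv r\,(2)}A^{p,s}$ depends only on $r$ modulo $2$, so there is a tautological equality $P^{r+2,s}=P^{r,s}$ as direct sums of vector spaces. The identity map, viewed as a map $\sigma:P^{r,s}\to P^{r+2,s}$, is therefore a bona fide endomorphism of degree $2$ of $P^{*,*}$; and since the differentials $\partial$ and $\bar\partial$ are defined componentwise on each $A^{p,s}$ independently of the decoration $r$, the map $\sigma$ manifestly commutes with both $\partial$ and $\bar\partial$ on the periodic bi-complex.

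Next, I would show that $\sigma$ descends to the quotient $C^{*,*}=P^{*,*}/N^{*,*}$, and identify the descended map with the formula $\omega\mapsto\omega/F^{-r-1}$. Since $N^{r,s}=\bigoplus_{p>-r,\,p\equiv r\,(2)}A^{p,s}$ and $p>-r$ implies $p>-r-2$, one has the inclusion $N^{r,s}\subseteq N^{r+2,s}$, which guarantees that $\sigma$ induces a well-defined map $\bar\sigma:C^{r,s}\to C^{r+2,s}$. To match $\bar\sigma$ with the stated formula, note that the subspace $F^{-r-1}\cap C^{r,s}$ consists of those summands $A^{p,s}\subset C^{r,s}$ with $-r-1\leq p\leq -r$ and $p\equiv r\,(2)$; parity excludes $p=-r-1$, so only $A^{-r,s}$ survives, and this is precisely the summand $N^{r+2,s}/N^{r,s}$ that $\bar\sigma$ kills. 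Hence $\bar\sigma(\omega)=\omega/F^{-r-1}\in C^{r+2,s}$, as required.

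Finally, the commutation of $S=\bar\sigma$ with $\partial$ and $\bar\partial$ on $C^{*,*}$ is inherited from the (obvious) commutation of $\sigma$ with these operators on $P^{*,*}$, using that both differentials descend to $C^{*,*}$ because $\partial N^{r,s}\subseteq N^{r+1,s}$ and $\bar\partial N^{r,s}\subseteq N^{r,s+1}$. The only delicate point—which is really the substance of the lemma—is the compatibility of the shift with the truncation convention for $\partial$: this is exactly what the inclusion $N^{r,s}\subseteq N^{r+2,s}$ encodes, and it explains why modding out by $F^{-r-1}$ (as opposed to naively reindexing) is the correct recipe. Once this descent picture is in place, the verification is essentially bookkeeping on the Hodge levels $p$ and the parity condition modulo $2$.
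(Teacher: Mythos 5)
Your proposal is correct and follows essentially the same route as the paper: both arguments realize $S$ as the tautological degree-$(2,0)$ shift on the strictly periodic bi-complex $(P^{*,*};\partial,\bar\partial)$ and reduce the lemma to the stability of the Hodge sub-complex $N^{*,*}$ under this shift, i.e.\ to the inclusion $N^{r,s}\subseteq N^{r+2,s}$ coming from $p\geq -r+1\implies p\geq -r-1$ together with the unchanged parity condition. Your additional bookkeeping identifying the descended map with $\omega\mapsto\omega/F^{-r-1}$ (the killed summand being $A^{-r,s}=N^{r+2,s}/N^{r,s}$) and the descent of $\partial,\bar\partial$ is a harmless elaboration of what the paper leaves implicit.
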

\proof It is enough to show that the  sub-complex $N^{r,s}$ as in \eqref{jneg} of the periodic bi-complex \eqref{perbis} is stable under the map $S$. A differential form $\omega$ belongs to $N^{r,s}\subset P^{r,s}$ if and only if its homogeneous components do so. Thus, for $\omega$ of type $(p,s)$ this means that $p\equiv r$ (mod $2$) and that $p\geq -r+1$. The element $S\omega$  is of type $(p,s)$ and one needs to check that it belongs to $N^{r',s}$ with $r'=r+2$. One has
$p\equiv r'$ (mod. $2$) and  $p\geq -r'+1$. \endproof

Correspondingly, the bi-complex  that computes the Hochschild homology of $X_\C$ is described in Figure \ref{hochsforms}. It fills up a finite square in the plane and it is defined by
\[
C_h^{r,s}:=(A^{-r,s};0,\bar\partial).
 \]
This bi-complex maps by inclusion into the bi-complex $(C^{*,*},\partial,\bar\partial)$ and it coincides precisely with the kernel (bi-complex) of the map $S$. Thus, by using the surjectivity of $S$ at the level of complexes, one derives an exact sequence of associated total complexes
\begin{equation}\label{exactseq}
    0\to C_h^*\to C^*\stackrel{S}{\to} C^{*+2}\to 0.
\end{equation}
The corresponding long exact sequence of cohomology groups is then the $SBI$ exact sequence
\begin{equation*}\label{sbi}
    \cdots\to HH_n(X_\C) \stackrel{I}{\to} HC_n(X_\C)\stackrel{S}{\to}HC_{n-2}(X_\C)\stackrel{B}{\to} HH_{n-1}(X_\C)\to\cdots
\end{equation*}
\begin{figure}
\begin{center}
\includegraphics[scale=0.6]{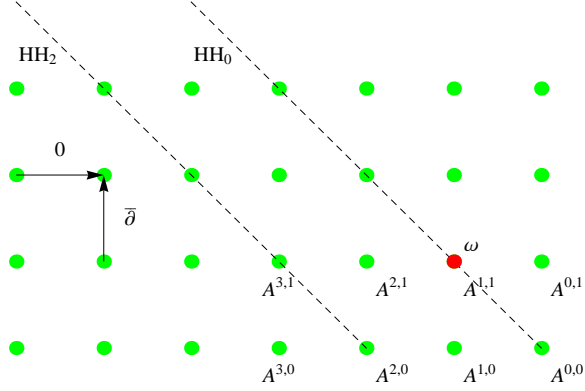}
\caption{The bi-complex $C_h^{r,s}=(A^{-r,s},0,\bar\partial)$ computing the Hochschild homology of $X_\C$}\label{hochsforms}
\end{center}
\end{figure}
\begin{rem} \label{ssurject}{\rm
Since the analytic space  $X_\ann$  associated to $X_\C=X_\aaa$ (\cf \cite{GAGA}) can be endowed with the structure of  a  K\"{a}hler manifold, the map $S$ is surjective at the level of cyclic homology (\cf \cite{Weibelcris} Proposition 4.1). We discuss (in the K\"{a}hler case) a specific example
where the result might look surprising. We consider a cyclic homology class $[\omega]\in HC_0(X_\C)$ coming from a form $\omega$ of type $(1,1)$ (\cf the red dot in Figure \ref{cycforms}). One has $\bar\partial \omega=0$ although there is no condition on $\partial \omega$ since the horizontal co-boundary, is automatically zero  due to the truncation.  It seems surprising that $[\omega]$ can belong to the image of $S$ since this means exactly that there exists a form $\omega'$ also of type $(1,1)$ and also representing the above class (\ie $[\omega]=[\omega']\in HC_0(X_\C)$) and such that
\begin{equation*}
   \bar\partial \omega'=0=\partial \omega'.
\end{equation*}
The reason why this holds is that one can modify $\omega$ to $\omega'=\omega+\bar\partial \alpha$ with $\alpha$ of type $(1,0)$ by using the K\"{a}hler metric so that $\omega'$ becomes harmonic  for any choice of the three Laplacians
\begin{equation*}
    \bar\partial \bar\partial^*+\bar\partial^*\bar\partial=
    \partial \partial^*+\partial^*\partial=\frac 12( dd^*+d^*d).
\end{equation*}
Then it follows that $\omega'$ is automatically also $\partial$-closed.
}\end{rem}

\subsection{The periodic cyclic homologies $HP_*(X_\C)$ and $HP_*(C^\infty(X_\sss,\C))$}\label{pch}

We shall now compare  the periodic cyclic homology $HP_*(X_\C)$ of a smooth and complex projective  variety $X_\C$ (viewed as a scheme over $\C$) with the periodic cyclic homology $HP_*(C^\infty(X_\sss,\C))$ of the underlying  smooth manifold $X_\sss=X(\C)$.\newline
By \cite{ncg} (\cf Theorem 46), the periodic cyclic homology of the commutative algebra $C^\infty(X_\sss,\C)$ is computed as the de Rham cohomology of smooth differential forms, \ie as the Betti cohomology with complex coefficients $H^*_B(X(\C),\C)$. This requires to take care of the natural Frechet topology on the algebra $C^\infty(X_\sss,\C)$. By Lemma 45 of \cite{ncg} one derives the description of the topological Hochschild homology groups as $HH_k(C^\infty(X_\sss,\C))=C^\infty(X_\sss,\wedge^kT^*_\C)$ where the map $B$ is the de Rham differential. We refer to \cite{ncg} for the computation of the cyclic homology and in particular for the identification of the periodic cyclic homology with the de Rham cohomology of smooth differential forms. Moreover, by applying Theorem 4.6.10 of \cite{Loday} (adapted to the topological case) one obtains the more precise identification
\begin{equation*}\label{cyctop}
    HP_n^{(j)}(C^\infty(X_\sss,\C))\cong H^{2j-n}_{B}(X(\C),\C)\qqq n,j\in \Z
\end{equation*}
where one takes into account the natural Frechet topology on the algebra $C^\infty(X_\sss,\C)$ in the periodic cyclic homology.
\begin{prop}\label{lemperbis}
Let $X_\C$ be a smooth complex projective  variety of dimension $m$.\newline
$(i)$~The map $\pi_X^*$ of cyclic homology groups induced by the morphism of schemes \[
\pi_X:\Spec(C^\infty(X_\sss,\C))\to X_\C
\]
is the composite of the following two isomorphisms
\begin{equation*}
 \,\hspace{100pt}\raisetag{-50pt} \xymatrix@C=25pt@R=25pt{
 HP_n^{(j)}(X_\C)\ar[dr]_\sim^{\varsigma}\ar[rr]^-{\pi_X^*} & &
 HP_n^{(j)}(C^\infty(X_\sss,\C)) & \\
 &  H^{2j-n}_{B}(X(\C),\C) \ar[ur]^\sim &\\
}\hspace{140pt}
\end{equation*}
$(ii)$~For a pair of integers $n,j$ with $n\geq 2m$ and $\frac n2\leq j\leq n$, the statement  in $(i)$ holds at the level of the cyclic homology groups, \ie
\begin{equation*}\label{percycl}
    HC_n^{(j)}(X_\C)\cong H^{2j-n}_{B}(X(\C),\C)\cong HC_n^{(j)}(C^\infty(X_\sss,\C))
\end{equation*}
where the upper index $(j)$ refers to the corresponding component of the $\lambda$-decomposition.
\end{prop}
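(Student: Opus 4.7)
The plan is to identify both sides of the diagram in (i) with the Betti cohomology $H^{2j-n}_B(X(\C),\C)$ via natural isomorphisms, and then verify that $\pi_X^*$ corresponds to the identity under these identifications. On the algebraic side, Proposition~\ref{propcyclic1} yields $HC_n^{(j)}(X_\C) \cong H^{2j-n}_B(X(\C),\C)/F^{j+1}$, and the periodicity $S$ of Lemma~\ref{mapS} intertwines the $\lambda$-weights as $S: HC_n^{(j)}\to HC_{n-2}^{(j-1)}$; in cohomological terms this becomes the natural surjection $H^{2j-n}_B/F^{j+1}\twoheadrightarrow H^{2j-n}_B/F^j$. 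Since the Hodge filtration is finite, passing to the inverse limit gives
\begin{equation*}
HP_n^{(j)}(X_\C) \;\cong\; \projlim_k H^{2j-n}_B(X(\C),\C)/F^{j+k+1} \;\cong\; H^{2j-n}_B(X(\C),\C).
\end{equation*}
On the smooth side, Theorem~46 of \cite{ncg} computes $HP_*(C^\infty(X_\sss,\C))$ as $H^*_B(X(\C),\C)$ via the de Rham realization, and the topological version of Theorem~4.6.10 of \cite{Loday} refines this to $HP_n^{(j)}(C^\infty(X_\sss,\C)) \cong H^{2j-n}_B(X(\C),\C)$.

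To see that $\pi_X^*$ corresponds to the identity under these two identifications, I would realize both isomorphisms through the single bicomplex $(C^{*,*};\partial,\bar\partial)$ of Section~\ref{sectrespq}. The Dolbeault sheaves of smooth $(p,q)$-forms on $X(\C)$ resolve the algebraic sheaves $\Omega^\bullet_{X_\C}$, and their global sections coincide term-by-term with the smooth de Rham complex that, via HKR, computes $HH_*(C^\infty(X_\sss,\C))$. The morphism $\pi_X:\Spec(C^\infty(X_\sss,\C))\to X_\C$ from Appendix~\ref{sectweil} acts on sheaves of functions by the inclusion of algebraic into smooth functions; propagated through the $(b,B)$-bicomplexes and through the $\lambda$-decomposition, the resulting chain map is the identity on the Dolbeault model, and therefore induces the identity on $H^{2j-n}_B$ after the Hodge-to-de-Rham degeneration and the de-Rham-to-Betti comparison.

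For (ii), the strategy is to deduce it from (i) by showing that the canonical map $HC_n^{(j)} \to HP_n^{(j)}$ is an isomorphism on both sides under the range $n\geq 2m$ and $n/2\leq j\leq n$. On the algebraic side, $HC_n^{(j)}(X_\C) \cong H^{2j-n}_B/F^{j+1}$, and $j\leq n$ gives $2j-n\leq j$, so every Hodge component $H^{p,q}$ with $p+q = 2j-n$ satisfies $p\leq 2j-n\leq j$; thus $F^{j+1}H^{2j-n}_B = 0$ and $HC_n^{(j)}(X_\C)\cong H^{2j-n}_B$. On the smooth side, the analogous analysis of the smooth $(b,B)$-bicomplex presents $HC_n^{(j)}(C^\infty(X_\sss,\C))$ as the cohomology in degree $2j-n$ of the truncated smooth de Rham complex $\Omega_{\sss}^{\leq j}(X_\sss)$; for $j<n$ the inequality $2j-n+1\leq j$ ensures that truncation does not affect the cohomology in degree $2j-n$, and for the extremal case $j=n$ the hypothesis $n\geq 2m$ yields $j+1>2m=\dim_\R X(\C)$, so $\Omega^{j+1}_{\sss}(X_\sss) = 0$ and truncation is again harmless.

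The main technical hurdle I foresee is the chain-level identification of $\pi_X^*$ with the identity on the Dolbeault model. This requires matching the abstract algebraic $\lambda$-decomposition of the cyclic complex of $C^\infty(X_\sss,\C)$ with the bidegree decomposition induced by the complex structure on $X(\C)$, together with the Frechet-topological subtleties of Theorem~46 in \cite{ncg} and the compatibility of the Dolbeault resolution with sheaves on the scheme $X_\C$ provided by Appendix~\ref{sectweil}. Granting this compatibility, the remaining arguments reduce to the Hodge filtration and truncated de Rham computations above.
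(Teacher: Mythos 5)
Your two outer identifications are fine, and they are exactly the ones the paper itself quotes: on the algebraic side Proposition \ref{propcyclic1} together with stabilization of the $S$-tower gives $HP_n^{(j)}(X_\C)\cong H_B^{2j-n}(X(\C),\C)$, and on the smooth side Theorem 46 of \cite{ncg} plus the topological version of Theorem 4.6.10 of \cite{Loday} give the same for $HP_n^{(j)}(C^\infty(X_\sss,\C))$. But the actual content of part $(i)$ is the commutativity of the triangle, i.e.\ that the map induced by the \emph{scheme morphism} $\pi_X$ coincides with this comparison isomorphism, and this is precisely the step you label ``the main technical hurdle'' and then grant. That is a genuine gap, not a routine verification: the cyclic complex of the Fr\'echet algebra $C^\infty(X_\sss,\C)$ is not the Dolbeault bicomplex $(C^{*,*};\partial,\bar\partial)$, and $\pi_X^*$ is defined through functoriality of Zariski hypercohomology of the sheafified $(b,B)$-complexes, so the claim that the induced chain map ``is the identity on the Dolbeault model'' has no direct meaning until one produces a model of \emph{both} sides on which the morphism of schemes acts visibly. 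The paper's proof supplies exactly this mechanism: it computes $HP_n^{(j)}(X_\C)$ by the \v{C}ech bicomplex of algebraic differential forms for an affine Zariski cover $\cU$, observes that $\pi_X$ is an affine morphism so that $\cV=\pi_X^{-1}(\cU)$ is an affine cover of $\Spec(C^\infty(X_\sss,\C))$, uses Theorem \ref{affscheme} $(iii)$ to identify $\pi_X^*$ on each intersection with the inclusion of algebraic sections of $\Omega^p$ into smooth forms of type $(p,0)$, and then notes that this inclusion is a morphism between two resolutions of the constant sheaf $\C$ on $X_\ann$ (holomorphic forms and smooth forms); finally the contractibility of the \v{C}ech columns on the smooth side (fine sheaves over an affine cover) shows the image cocycle is cohomologous to a globally defined closed form of degree $2j-n$, representing the same Betti class. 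Some argument of this kind is what must replace ``granting this compatibility''; without it you have only re-derived the two known isomorphisms, not identified $\pi_X^*$ with their composite.

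Two smaller points. First, in $(ii)$ your filtration and truncation computations are correct and are essentially the paper's own argument ($F^{j+1}H_B^{2j-n}=0$ when $j\le n$; the stupid truncation $\Omega^{\le j}$ is harmless in degree $2j-n$ when $j<n$, and for $j=n$ the hypothesis $n\ge 2m$ kills the issue), but the reduction ``deduce $(ii)$ from $(i)$ via the canonical map $HC_n^{(j)}\to HP_n^{(j)}$'' uses a map that does not exist with those indices: the natural comparison shifts degree and weight, namely $S\colon HP_{n+2}^{(j+1)}\to HC_n^{(j)}$ as in \eqref{sbibis} and its $\lambda$-refined version, so you must either compose with the periodicity isomorphism or argue directly as the paper does; in either case naturality with respect to $\pi_X^*$ again rests on the chain-level identification from $(i)$. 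Second, your route silently conflates the algebraic cyclic homology of $C^\infty(X_\sss,\C)$ (which is what Weibel's theorem for the affine scheme gives) with the topological one computed by Connes' theorem; the paper also treats this lightly, but if you rebuild the proof you should at least record where the Fr\'echet topology enters.
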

\proof $(i)$~Take a covering $\cU$ of $X_\C$ made by affine Zariski open sets $U_i$. The morphism $\pi_X$ is affine (as any morphism of an affine scheme to a projective scheme) thus the inverse images $V_i=\pi_X^{-1}(U_i)$ form an open affine covering $\cV$ of the scheme $\Spec(C^\infty(X_\sss,\C))$. The discussion of  \S \ref{sectcychom} applies to $X_\C$ using in place of the injective resolution $(A^{*,*};d,\delta)$ of the de Rham complex, the \v{C}ech bicomplex (\cf \cite{Hart} Lemma III 4.2 and Theorem III 4.5)
\begin{equation*}\label{cech}
    (\text{\v{C}}^{p,q}=C^q(\cU,\Omega^p),d,\delta)
\end{equation*}
where $d$ is the de Rham coboundary and $\delta$ the \v{C}ech coboundary. In particular $HP_n^{(j)}(X_\C)$ is the cohomology $H^{-n}$ of the complex of cochains
\begin{equation*}\label{drequ}
   (\bigoplus_{r+s=*} \text{\v{C}}^{r+2j,s}, d+\delta).
\end{equation*}
 Let then $\omega\in HP_n^{(j)}(X_\C)$ be represented by a cocycle
 \begin{equation*}
 \omega=\sum_{r+s=-n}\omega_{r,s}\,, \ \
 \omega_{r,s}\in C^s(\cU,\Omega^{r+2j})
 \end{equation*}
 The image $\pi_X^*(\omega)\in HP_n^{(j)}(C^\infty(X_\sss,\C))$ is then given by the corresponding \v{C}ech cocycle for the affine open covering $\cV$ of $\Spec(C^\infty(X_\sss,\C))$.
Now, by Theorem \ref{affscheme} $(iii)$, the map $\pi_X^*$ coincides on the affine Zariski open sets $U:=U_{i_0,\ldots ,i_k}=\cap U_{i_j}$ with the inclusion $\Gamma(U,\Omega_{X_\C}^p)\stackrel{\iota}{\to} C^\infty(U,\wedge^{p,0}T^*_\C)$ of algebraic sections of  $\Omega_{X_\C}^p$ into the space of smooth sections  of the vector bundle $\wedge^{p,0}T^*_\C$ of (complex) differential forms of type $(p,0)$. This inclusion is the restriction of the corresponding inclusion for the analytic space $X_\ann$ and the latter is a morphism for the following two resolutions of the constant sheaf $\C$ for the usual topology. The first resolution is given by the sheaf of holomorphic differential forms
\begin{equation*}
    0\to \C\to \Omega^0 \stackrel{\partial}{\to} \Omega^1\to \ldots
\end{equation*}
and the second resolution is the de Rham complex of sheaves of $C^\infty$ differential forms
\begin{equation*}
    0\to\C \to C^\infty(\bullet,\wedge^0 T^*_\C)\stackrel{d}{\to}
    C^\infty(\bullet,\wedge^1 T^*_\C)\to \ldots
\end{equation*}
This shows that  the Betti cohomology class of $\omega$ is the same as the Betti cohomology class of $\pi_X^*(\omega)$ which is represented by the \v{C}ech cocycle $\iota(\omega)$ in the \v{C}ech bicomplex $C^{a,b}=C^{b}(\cV,\wedge^a T^*_\C)$ of smooth differential forms, associated to the covering $\cV$. In this bicomplex the vertical lines are contractible since one is in the affine case and thus $\iota(\omega)$  is cohomologous to a global section \ie a closed differential form of degree $2j-n$.

$(ii)$~For $n\geq 2m$, one sees that  $HC_n(C^\infty(X_\sss,\C))=\oplus_{j\geq n/2} H^{2j-n}_{B}(X(\C),\C)$.
The  statement then follows from the $\lambda$-decomposition  $HC_n^{(j)}(X_\C)$ by applying Theorem 3.4 of \cite{Weibelcris}  and Theorem 4.6.10 of \cite{Loday} (adapted to the topological case) for $HC_n^{(j)}(C^\infty(X_\sss,\C)).$ \endproof

\begin{rem}\label{unstableran}{\rm
$(i)$~To understand the behavior of the map $\pi_X^*$ we give a simple example. Let $X_\C=\P^1_\C$ be the complex projective line. It is obtained  by gluing two  affine lines $U_+=\Spec(\C[z])$ and
$U_-=\Spec(\C[1/z])$ on the intersection $U=\Spec(\C[z,1/z])$. Let $\cU=\{U_\pm\}$ be the affine covering of $X_\C$, then the differential form $dz/z\in C^1(\cU,\Omega^1_{X_\C})$ determines a cocycle $\omega$ in the  \v{C}ech bicomplex. Let $[\omega]_{\rm cyc}\in HC_0(X_\C)$ be the corresponding cyclic homology class as in Figure \ref{cycsheaf}. One has $[\omega]_{\rm cyc}\in HC^{(1)}_0(X_\C)$ by construction. Since for any commutative algebra $B$ ($B=C^\infty(X_\sss,\C)$ in this example) one has $HC_0(B)=HC_0^{(0)}(B)$ (\cf \cite{Loday} Theorem 4.6.7), it follows that $\pi_X^*([\omega]_{\rm cyc})\in
HC_0^{(1)}(B)=\{0\}$. Let now $[\omega]_{\rm per}\in HC_2(X_\C)$ be represented by the same cocycle $\omega$ in the \v{C}ech bicomplex, with $S [\omega]_{\rm per}=[\omega]_{\rm cyc}$ as in Figure \ref{cycsheaf}. One has $[\omega]_{\rm per}\in HC_2^{(2)}(X_\C)$ and $\pi_X^*([\omega]_{\rm per})\in HC_2^{(2)}(B)$ is obtained by first writing the cocycle $\omega$ as a coboundary $\delta(\xi)$ in the \v{C}ech bicomplex of $B$,
\begin{equation*}
 \omega=\xi_+-\xi_-,\ \     \xi_+=\bar z dz/(1+z\bar z), \ \ \xi_-=- dz/(z(1+z\bar z)),
\end{equation*}
where $\xi_\pm\in \Gamma(\pi_X^{-1}(U_\pm),\Omega^1_B)$. Hence the class $\pi_X^*([\omega]_{\rm per})\in HC_2^{(2)}(B)$ is represented by the two form $\omega_2=-d\xi\in \Gamma(\Spec(B),\Omega^2_B)$
\begin{equation*}
     \omega_2=-dz\wedge d\bar z/(1+z\bar z)^2\in \Gamma(\Spec(B),\Omega^2_B).
\end{equation*}
 A similar computation performed in the \v{C}ech bicomplex of $B$, starting with $\pi_X^*([\omega]_{\rm cyc})$, produces $0$ because the coboundary $d$ is $0$ since there is no component $\Omega^2$ in $J^{0,0}$ (\cf Figure \ref{cycsheaf}).

$(ii)$~In the  above example $(i)$, it suffices to adjoin the variable $\bar z$ in order to obtain the existence of global affine coordinates such as  $z/(1+z\bar z)$, an affine scheme $Y$ and morphism $\pi:Y\to X_\C$ so that $\pi^*([\omega]_{\rm per})\in HC_2^{(2)}(\cO(Y))$ is represented by a global two form. This   illustrates the general fact that the map $\pi_X$ factorizes through an affine scheme ${}_*X$ whose construction is entirely in the realm of algebraic geometry. The general construction of the functor $X\mapsto {}_*X$ is provided in Appendix \ref{sectweil}.

$(iii)$~Lemma \ref{lemper} shows that the cyclic homology $HC_n(X_\C)$ already stabilizes, \ie becomes periodic,  for $n\geq m$. On the other hand, the cyclic homology $HC_n(C^\infty(X_\sss,\C))$ of the underlying smooth manifold only stabilizes for $n\geq 2m=2~\dim X_\C$, and is infinite dimensional for lower values of $n$. This shows that these two homologies cannot coincide in the unstable region $n<2m$.
}\end{rem}

\subsection{The Tate-twisted map $\tau: PC_\rrr^*(X_\C)\to PC^*(X_\C)$}\label{realperiodic}

We keep denoting with $X_\C$  a smooth, projective algebraic variety over $\C$ and with $X_\sss$ the associated smooth manifold. Let $PC^*(X_\C)$ and $PC^*(C^\infty(X_\sss,\C))$ be the cochain complexes which compute the periodic cyclic homologies as
\begin{equation}\label{negaindex}
    HP_n(X_\C)=H^{-n}\left(PC^*(X_\C)\right)   \,,  \ \ HP_n(C^\infty(X_\sss,\C))=H^{-n}\left(PC^*(C^\infty(X_\sss,\C))\right).
\end{equation}
In order to define the Tate twisted map
\begin{equation}\label{ttwist}
   \tau: HP^\rrr_*(X_\C)\to HP_*(X_\C)
\end{equation}
at the level of complexes of cochains, we shall use a quasi pullback of  two maps of complexes which correspond to
the morphisms of schemes
\begin{equation*}\label{schemaps}
    \pi_X:\Spec(C^\infty(X_\sss,\C))\to X_\C, \ \ \iota: \Spec(C^\infty(X_\sss,\C))\to \Spec(C^\infty(X_\sss,\R))
\end{equation*}
where the morphism $\iota$ is induced by the natural inclusion $C^\infty(X_\sss,\R)\subset C^\infty(X_\sss,\C)$.

We recall that if $A\stackrel{f}{\to} C\stackrel{g}{\leftarrow} B$ is a diagram of cochain complexes in an abelian category, its quasi-pullback is defined as the cochain complex
\begin{equation*}
A\times_C B:=\text{Cone}(A\oplus B\stackrel{f-g}{\to}C)[1].
\end{equation*}
The two natural projections give rise to maps from $A\times_C B$ to $A$ and $B$ \resp and the diagram
\[
\begin{CD}
A\times_C B@>f'>> B\\
@VVg'V @VVgV\\
A@>>f> C
\end{CD}
\]
commutes up-to canonical homotopy. The short exact sequence of complexes
\begin{equation}\label{basesequ}
 0\to \text{Cone}(A \stackrel{f}{\to}C)[1]\to
A\times_C B
 \stackrel{f'}{\to} B\to 0
\end{equation}
then determines an induced long exact sequence of cohomology groups. In particular when $f$ is a quasi isomorphism, so is $f'$ and the maps $g:B\to C$ and $g':A\times_C B\to A$ are quasi isomorphic.

\begin{defn}\label{maptau}
We define $PC_\rrr^*(X_\C)$ to be the  quasi pullback of the two morphisms of complexes
\begin{equation*}\label{qpb}
    \pi_X^*:PC^*(X_\C)\to PC^*(C^\infty(X_\sss,\C)),\quad (2\pi i)^{\Theta_0}:PC^*(C^\infty(X_\sss,\R))\to PC^*(C^\infty(X_\sss,\C))
\end{equation*}
where $\pi_X^*$ is the map of complexes induced from the morphism  $\pi_X$ and $\Theta_0$ is the generator of the $\lambda$-operations.
We denote by $\tau: PC_\rrr^*(X_\C)\to PC^*(X_\C)$ the projection map of complexes on the first factor of the quasi pullback.
\end{defn}
The map $\pi_X^*:PC^*(X_\C)\to PC^*(C^\infty(X_\sss,\C))$ is an isomorphism in cohomology (we take always into account the Frechet topology of $C^\infty(X_\sss,\C)$). Thus, the map $\tau: HP^\rrr_*(X_\C)\to HP_*(X_\C)$ is quasi isomorphic to the map $(2\pi i)^{\Theta_0}:PC^*(C^\infty(X_\sss,\R))\to PC^*(C^\infty(X_\sss,\C))$.
This shows, using the component $(j)$ in the $\lambda$-decomposition, that one  has a commutative diagram
\begin{equation}\label{Smap}
\begin{CD}
HP_{n+2}^{\rm real,\,(j+1)}(X_\C) @>S>> HP_n^{\rm real,\,(j)}(X_\C)\\
@VV\tau V @VV\tau V\\
HP_{n+2}^{(j+1)}(X_\C)@>>(2\pi i)^{-1}\,S> HP_n^{(j)}(X_\C)
\end{CD}
\end{equation}
In the simplest case of $X_\C$ be a single point, one has $HP_n^{(j)}(X_\C)\neq \{0\}$ only for $n=2j$ and in that case the map $\tau:HP_n^{\rm real,\,(j)}(X_\C)\to HP_n^{(j)}(X_\C)$ is the multiplication by $(2\pi i)^j$ from $\R$ to $\C$.
\begin{prop}\label{realiden} The isomorphism $\varsigma$ as in Proposition \ref{lemperbis} $(i)$ lifts to an isomorphism \[HP_n^{\rm real,\,(j)}(X_\C)\stackrel[\sim]{\varsigma}{\to} H_{B}^{2j-n}(X(\C),\R(j))\] such that the following diagram commutes
\begin{equation}\label{Smap1}
\begin{CD}
HP_n^{\rm real,\,(j)}(X_\C) @>{\varsigma} >\sim>H_{B}^{2j-n}(X(\C),\R(j))\\
@VV\tau V @VV\subset V\\
HP_n^{(j)}(X_\C) @>\varsigma>\sim > H_{B}^{2j-n}(X(\C),\C).
\end{CD}
\end{equation}
\end{prop}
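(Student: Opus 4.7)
The plan is to exploit the definition of $PC_\rrr^*(X_\C)$ as a quasi-pullback together with the fact, recorded in Proposition \ref{lemperbis}$(i)$, that $\pi_X^*$ induces an isomorphism on the $(j)$-component of periodic cyclic homology. First I would invoke the general property of quasi-pullbacks stated after \eqref{basesequ}: since $\pi_X^*:PC^*(X_\C)\to PC^*(C^\infty(X_\sss,\C))$ is a quasi-isomorphism on each $\lambda$-component $(j)$, the second projection
\[
PC_\rrr^*(X_\C)\longrightarrow PC^*(C^\infty(X_\sss,\R))
\]
is also a quasi-isomorphism on the component $(j)$. Consequently $HP_n^{\rm real,\,(j)}(X_\C)$ is canonically identified with $HP_n^{(j)}(C^\infty(X_\sss,\R))$, and under this identification the map $\tau$ becomes the Tate-twisted comparison $(2\pi i)^{\Theta_0}:HP_n^{(j)}(C^\infty(X_\sss,\R))\to HP_n^{(j)}(C^\infty(X_\sss,\C))$, post-composed with $(\pi_X^*)^{-1}$ on the target.

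Next I would compute $HP_n^{(j)}(C^\infty(X_\sss,\R))$ explicitly. By Theorem~46 of \cite{ncg}, adapted in \S\ref{pch} to the Frechet setting, the periodic cyclic homology of $C^\infty(X_\sss,\R)$ is the de Rham cohomology of real smooth differential forms on $X_\sss$, which agrees with Betti cohomology $H^*_B(X(\C),\R)$. Applying Theorem~4.6.10 of \cite{Loday} (in its topological version) the $\lambda$-decomposition extracts the component of Hodge weight $j$, giving
\[
HP_n^{(j)}(C^\infty(X_\sss,\R))\cong H^{2j-n}_B(X(\C),\R),
\]
and similarly with $\R$ replaced by $\C$. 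The inclusion of cochain complexes $PC^*(C^\infty(X_\sss,\R))\subset PC^*(C^\infty(X_\sss,\C))$ translates, under these identifications, into the scalar-extension inclusion $H^{2j-n}_B(X(\C),\R)\hookrightarrow H^{2j-n}_B(X(\C),\C)$.

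The key point is then to check that the extra factor $(2\pi i)^{\Theta_0}$ converts the real inclusion into the twisted inclusion $H^{2j-n}_B(X(\C),\R(j))\subset H^{2j-n}_B(X(\C),\C)$. On the $(j)$-eigenspace the generator $\Theta_0$ of the $\lambda$-operations acts as multiplication by $j$, so $(2\pi i)^{\Theta_0}$ restricts there to multiplication by $(2\pi i)^j$, sending $H^{2j-n}_B(X(\C),\R)$ isomorphically onto $(2\pi i)^j H^{2j-n}_B(X(\C),\R)=H^{2j-n}_B(X(\C),\R(j))$ inside $H^{2j-n}_B(X(\C),\C)$. Defining $\varsigma$ on $HP_n^{\rm real,\,(j)}(X_\C)$ as the resulting isomorphism onto $H^{2j-n}_B(X(\C),\R(j))$ yields the claimed lift, and the diagram \eqref{Smap1} commutes tautologically: the right vertical arrow is by construction the inclusion $\R(j)\subset \C$, while the bottom horizontal arrow is the isomorphism of Proposition \ref{lemperbis}$(i)$, which itself is the composition of $\pi_X^*$ with the Betti identification.

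The main obstacle will be the careful bookkeeping of the $\lambda$-grading across the three periodic cyclic complexes appearing in the quasi-pullback, and the verification that the topological version of Theorem~4.6.10 of \cite{Loday} applies to $C^\infty(X_\sss,\R)$ so that $\Theta_0$ indeed acts as the Hodge weight on the Betti side. Once this identification of $\Theta_0$ with the weight is in place, the computation of $(2\pi i)^{\Theta_0}$ as multiplication by $(2\pi i)^j$, and hence the commutativity of \eqref{Smap1}, becomes formal from the quasi-pullback construction.
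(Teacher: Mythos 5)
Your proposal is correct and follows essentially the same route as the paper: the paper likewise uses the quasi-pullback property to identify $\tau$ (up to quasi-isomorphism) with $(2\pi i)^{\Theta_0}:PC^*(C^\infty(X_\sss,\R))\to PC^*(C^\infty(X_\sss,\C))$, then passes to $H^{-n}$ and the $(j)$-component, where the identification with Betti cohomology and the action of $\Theta_0$ by $j$ turn this map into the inclusion $H_{B}^{2j-n}(X(\C),\R(j))\subset H_{B}^{2j-n}(X(\C),\C)$. Your write-up simply makes explicit the bookkeeping that the paper's terse proof leaves implicit.
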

\proof Taking the cohomology $H^{-n}$ and the component $(j)$ of the $\lambda$-decomposition, the map
\begin{equation*}
    (2\pi i)^{\Theta_0}:PC^*(C^\infty(X_\sss,\R))\to PC^*(C^\infty(X_\sss,\C))
\end{equation*}
is isomorphic to the map
\begin{equation*}
    H_{B}^{2j-n}(X(\C),\R(j))\to  H_{B}^{2j-n}(X(\C),\C)
\end{equation*}
associated to the additive group homomorphism  $\R(j)\subset \C$. \endproof

\section{The  archimedean cyclic homology $\har_*(X_\C)$}\label{sectharcyc}

In this section we give the definition and describe the first properties of the archimedean cyclic homology. For the construction, we follow the parallel with the description of the real Deligne cohomology that can be interpreted as the hyper-cohomology of a complex of sheaves which is, up to a shift of degrees, the quasi pullback of the two natural maps of complexes
\begin{equation*}
    \R(r)\to \Omega^*, \ \ F^r\Omega^*\to \Omega^*.
\end{equation*}
In cyclic homology, the Hodge filtration gets replaced by the negative cyclic homology: \cf \S \ref{sectneg}.  \S \ref{sectarcdefn} describes the archimedean cyclic homology of a smooth, projective complex algebraic variety.
In \S \ref{sectreal}, we give the corresponding definition in the case the algebraic variety is defined over the reals, by using the action of the anti-linear Frobenius.

\subsection{Negative cyclic homology}\label{sectneg}
We recall (\cf \cite{HoodJones}) that the negative cyclic homology $HN_*$ (of a cyclic $k$-module over a field $k$) is defined by dualizing the cyclic cohomology $HC^*$ with respect to the coefficient polynomial ring $k[v]=HC^*(k)$. Here, the variable $v$ has degree two and corresponds to the periodicity map $S$
in cyclic cohomology. At the level of the defining bi-complexes, the negative cyclic homology bi-complex $NC_{*,*}$ coincides, up-to a shift of degrees, with the kernel of the natural map of complexes connecting the periodic $(b,B)$ bi-complex $PC_{*,*}$ and the $(b,B)$ bi-complex $CC_{*,*}$. We set $u=v^{-1}$ and let $(C_*,b,B)$ be a mixed complex of chains. Then one finds (\cf \cite{HoodJones})
\begin{figure}
\begin{center}
\includegraphics[scale=0.5]{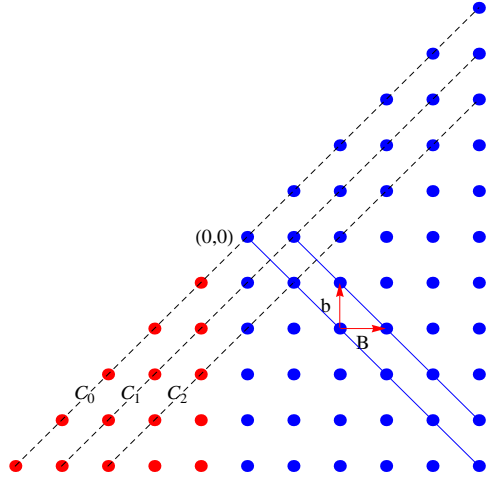}
\caption{In blue, the negative sub-complex $NC_{*,*}$ of the periodic $(b,B)$ bi-complex $PC_{*,*}$
all re-indexed in negative degrees. The hypercohomology $\mathbb H^{-n}$ of the blue complex of cochains is $HN_n$. The hypercohomology $\mathbb H^{-n}$ of the quotient (in red) complex of cochains is $HC_{n-2}$.}
\label{percyc}
\end{center}
\end{figure}
\begin{equation*}\label{bicomplexes}
    NC_{*,*}=k[u]\hat\otimes_k C_*, \qquad PC_{*,*}=k[u,u^{-1}]\hat\otimes_k C_*, \qquad
    CC_{*,*}=(k[u,u^{-1}]/uk[u])\hat\otimes_k C_*.
\end{equation*}
Here, the two boundary operators are $b$ and $uB$ and if $M$ and $N$ are graded $k$-modules, one defines $M\hat\otimes N$ by $(M\hat\otimes N)_n = \prod M_i\otimes N_{n-i}$.\newline
In the context of the sheafified theories over a scheme $X_k$ over a field $k$, it is convenient  (\cf  \cite{Weibelcris}) to re-index  the above bi-complexes in negative degrees, so that they turn into cohomology bi-complexes. Figure \ref{percyc} shows their behavior.\newline
Then the hyper-cohomology $\mathbb H^{-n}$ of the negative complex of cochains $NC^{*,*}$ is denoted by $HN_n(X_k)$, the hyper-cohomology $\mathbb H^{-n}$ of the periodic complex of cochains $PC^{*,*}$ is denoted $HP_n(X_k)$ and
the hyper-cohomology $\mathbb H^{-n}$ of the quotient cochain complex  is $HC_{n-2}(X_k)$. One  has a short  exact sequence of  cochain complexes  of the form
\begin{equation}\label{shortcomp}
    0\to NC^{*,*}\stackrel{I}{\to} PC^{*,*}\to CC^{*,*}[-1,-1]\to 0
\end{equation}
For instance, one has $NC^{-1,-1}=0$, $PC^{-1,-1}=CC^{0,0}$, $CC^{-1,-1}[-1,-1]=CC^{0,0}$. We let $ PC^j= {\rm Tot}^j(PC^{*,*})$ and $NC^j= {\rm Tot}^j(NC^{*,*})$ be the total complexes and we denote with $I:NC^j\to PC^j$ the canonical inclusion. Then \eqref{shortcomp} yields the short exact sequence of total complexes of cochains
\begin{equation}\label{shortcomp1}
    0\to NC^*\stackrel{I}{\to} PC^*\to CC^*[-2]\to 0.
\end{equation}
This sequence induces the long exact sequence of hypercohomology groups $\mathbb H^{-n}$
 \begin{equation}\label{sbibis}
    \cdots\to HN_n(X_k) \stackrel{I}{\to} HP_n(X_k)\stackrel{S}{\to}HC_{n-2}(X_k)\stackrel{B}{\to} HN_{n-1}(X_k)\to\cdots
\end{equation}
that sits on top of the $SBI$ long exact sequence in a canonical commutative diagram (\cf \cite{Loday}, Proposition 5.1.5).
The $\lambda$-decomposition is compatible with \eqref{sbibis}, therefore one also has the following exact sequence (\cf \cite{Loday}, Theorem 4.6.9, for the analogous statement for the SBI sequence)
\begin{equation*}\label{sbibisjj}
    \cdots\to HN^{(j)}_n(X_k) \stackrel{I}{\to} HP^{(j)}_n(X_k)\stackrel{S}{\to}HC^{(j-1)}_{n-2}(X_k)\stackrel{B}{\to} HN^{(j)}_{n-1}(X_k)\to\cdots
\end{equation*}
Note that the index $j$ is fixed all along the sequence.
The sequence \eqref{shortcomp1} determines a canonical isomorphism
\begin{equation}\label{coneident}
HC_{n-2}(X_k) \cong \mathbb H^{-n}(X_k,\text{Cone}(NC^*\stackrel{I}{\to}PC^*)).
\end{equation}

\subsection{The archimedean cyclic homology of $X_\C$}\label{sectarcdefn}
For a smooth complex projective variety $X_\C$ we consider the complex which is (up to a shift of degrees) the quasi-pullback of the two maps of complexes (\cf Definition~\ref{maptau})
\[
 I:NC^*(X_\C)\to PC^*(X_\C),\qquad{\rm and}\qquad \tau: PC_\rrr^*(X_\C)\to PC^*(X_\C).
 \]
\begin{defn}\label{realhar} The archimedean cyclic homology of a smooth projective complex variety $X_\C$  is defined as follows
\begin{equation*}\label{hardefngen}
    \har_n(X_\C):= \mathbb H^{-n}\left(X_\C,\text{Cone}\left( NC^*(X_\C)\oplus PC_\rrr^*(X_\C) \stackrel{\beta}{\to} PC^*(X_\C)                      \right)[2]\right)
\end{equation*}
where the map $\beta$ is given by
\begin{equation*}
    \beta(\omega,a)=I(\omega)-\tau(a).
\end{equation*}
\end{defn}

The long exact sequence of cohomology groups  associated to \eqref{basesequ} produces in this context the following
\begin{prop}\label{proplongexactseq}
There is a long exact sequence of the form
\begin{multline}\label{lloong}
   \cdots\to HP_{n+2}^\rrr(X_\C)\stackrel{S}{\to}HC_n(X_\C)\to \har_n(X_\C)\to
   HP_{n+1}^\rrr(X_\C)\stackrel{S}{\to}\\ \to HC_{n-1}(X_\C)\to \har_{n-1}(X_\C)\to \cdots
\end{multline}
\end{prop}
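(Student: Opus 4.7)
The plan is to derive the long exact sequence \eqref{lloong} by specializing the general quasi-pullback exact sequence \eqref{basesequ} to the present setting and then passing to hypercohomology. First I would apply \eqref{basesequ} to the diagram of total cochain complexes
$$NC^*(X_\C)\stackrel{I}{\to} PC^*(X_\C)\stackrel{\tau}{\leftarrow} PC_\rrr^*(X_\C),$$
so that with $A=NC^*(X_\C)$, $B=PC_\rrr^*(X_\C)$, $C=PC^*(X_\C)$, $f=I$ and $g=\tau$ one obtains a short exact sequence
$$0\to \text{Cone}(I)[1]\to NC^*\times_{PC^*}PC_\rrr^*\to PC_\rrr^*\to 0.$$
Since the quasi-pullback in the middle is by construction $\text{Cone}(\beta)[1]$ with $\beta(\omega,a)=I(\omega)-\tau(a)$, an overall shift by $[1]$ yields the short exact sequence of cochain complexes
$$0\to \text{Cone}(I)[2]\to \text{Cone}(\beta)[2]\to PC_\rrr^*[1]\to 0.$$

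Next I would pass to hypercohomology $\mathbb H^{-n}(X_\C,-)$ and identify each of the three terms. Using the convention $C[p]^n=C^{n-p}$ of the paper together with Definition \ref{realhar} and \eqref{negaindex}, the middle term $\mathbb H^{-n}(\text{Cone}(\beta)[2])$ is $\har_n(X_\C)$; the quotient gives $\mathbb H^{-n}(PC_\rrr^*[1])=\mathbb H^{-n-1}(PC_\rrr^*)=HP^\rrr_{n+1}(X_\C)$; and the sub-object gives $\mathbb H^{-n}(\text{Cone}(I)[2])=\mathbb H^{-n-2}(\text{Cone}(I))$, which by the canonical isomorphism \eqref{coneident} is $HC_n(X_\C)$. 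The associated long exact sequence therefore has all the desired terms in the right degrees.

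The one point that I expect to need a careful check is the identification of the connecting homomorphisms of the resulting long exact sequence with the maps denoted $S$ in \eqref{lloong}. A direct snake-lemma chase shows that a class $[a]\in HP^\rrr_{n+1}(X_\C)$ represented by a cocycle $a\in PC_\rrr^{-n-1}$ is sent by the boundary map to the class of $(0,\tau(a))$ in $\text{Cone}(I)$, so the boundary factors as $HP^\rrr_{n+1}\stackrel{\tau}{\to}HP_{n+1}\to HC_{n-1}$, where the second arrow is the periodicity operator appearing in the $SBI$ sequence \eqref{sbibis}. Combined with the Tate-twist bookkeeping already recorded in diagram \eqref{Smap}, this composite is precisely the map $S$ of \eqref{lloong}, which completes the identification and yields the desired long exact sequence.
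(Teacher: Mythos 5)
Your proposal is correct and follows essentially the same route as the paper: specializing the quasi-pullback short exact sequence \eqref{basesequ} to $I:NC^*(X_\C)\to PC^*(X_\C)$ and $\tau:PC_\rrr^*(X_\C)\to PC^*(X_\C)$, identifying the cone of $I$ with cyclic homology via \eqref{coneident}, and passing to hypercohomology, the only differences being your (equivalent) shift bookkeeping. Your explicit check that the connecting homomorphism is the composite $S\circ\tau$ is a point the paper leaves implicit here (it surfaces in Proposition \ref{agree}), and it is carried out correctly.
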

\proof The exact sequence of complexes \eqref{basesequ} takes here the  form
\begin{multline*}
    0\to \text{Cone}\left(NC^*(X_\C) \stackrel{I}{\to} PC^*(X_\C)\right)\to
   \text{Cone}\left(PC_\rrr^*(X_\C)\oplus NC^*(X_\C) \stackrel{\beta}{\to} PC^*(X_\C)\right)
    \to\\ \to PC_\rrr^*(X_\C)[-1]\to 0
\end{multline*}
Moreover, \eqref{coneident} reads as $HC_{n-2}(X_\C) = \mathbb H^{-n}(X_\C,\text{Cone}(NC^*(X_\C)\stackrel{I}{\to}PC^*(X_\C)))$.
Passing to hypercohomology  one gets the long exact sequence
\begin{equation*}\label{lexxx}
\cdots\to HC_{n-2}(X_\C)\to \har_{n-2}(X_\C)\to \mathbb H^{-n}(X_\C, PC_\rrr^*(X_\C)[-1])\to HC_{n-3}(X_\C)\to \cdots
\end{equation*}
which takes  the form
\begin{equation*}\label{lloongbis}
   \cdots \to HC_{n-2}(X_\C)\to \har_{n-2}(X_\C)\to
   HP_{n-1}^\rrr(X_\C)\to HC_{n-3}(X_\C)\to \cdots
\end{equation*}
\endproof
Next, we compare \eqref{lloong} with the  following exact sequence of real Deligne cohomology (\cf \S\ref{passingsect})
\begin{equation}\label{Ds}
 \to H^m_B(X(\C),\R(r))\to  H_{{\rm dR},\text{rel}}^{m}(X(\C),r)\to H^{m+1}_{\mathcal D}(X_\C,\R(r)) \to H^{m+1}_B(X(\C),\R(r))\to\cdots
\end{equation}
To achieve a correct comparison we implement in \eqref{lloong} the $\lambda$-decomposition and deduce  the following exact sequence
\begin{multline}\label{seqcrux}
   \to HP_{n+2}^{\rm real,\,(j+1)}(X_\C)\stackrel{S}{\to}HC_n^{(j)}(X_\C)\to {\har_n}^{,(j)}(X_\C)\to\\
\to   HP_{n+1}^{\rm real,\,(j+1)}(X_\C)\stackrel{S}{\to}HC_{n-1}^{(j)}(X_\C)\to \cdots
\end{multline}
Notice that in both exact sequences \eqref{Ds} and \eqref{seqcrux} the indices $j$ and $r$ are kept fixed, while the value of $m=2j-n$ increases by $1$ as $n$ decreases by $1$.
With $m=2j-n$ and $r=j+1$, one then argues the existence of the following correspondence:
\vspace{.1in}

\tracingtabularx
\hglue 23mm\begin{tabularx}{\linewidth}%
{>{\setlength{\hsize}{.8\hsize}}X|%
>{\setlength{\hsize}{.7\hsize}}X}
$H_B^{2(j+1)-(n+2)}(X(\C),\R(j+1))$ & $HP_{n+2}^{\rm real,\,(j+1)}(X_\C)$\\
$\tilde H^{2j-n+1}_{\mathcal D}(X_\C,\R(j+1))=H_{{\rm dR},\text{rel}}^{2j-n}(X(\C),j+1)$ & $HC_n^{(j)}(X_\C)$\\
$H^{2j-n+1}_{\mathcal D}(X_\C,\R(j+1))$& ${\har_n}^{,(j)}(X_\C)$\\
\end{tabularx}
\vspace{.1in}

If $X_k$ is a smooth, projective variety over a field $k$ of characteristic zero (so in particular in the case $k=\C$) the ``Hodge-to-de Rham'' spectral sequence degenerates, then by applying Proposition 4.1 (b) in \cite{Weibelcris}, \eqref{sbibis} brakes into  short exact sequences
\begin{equation}\label{sbibis1}
 0   \to HN_{n+2}(X_k) \stackrel{I}{\to} HP_{n+2}(X_k)\stackrel{S}{\to}HC_{n}(X_k)\to 0.
\end{equation}
By implementing the $\lambda$-decomposition, for each $j\geq 0$, one derives exact sequences of the form
\begin{equation}\label{sbibis2}
 0   \to HN_{n+2}^{(j+1)}(X_k) \stackrel{I}{\to} HP_{n+2}^{(j+1)}(X_k)\stackrel{S}{\to}HC_{n}^{(j)}(X_k)\to 0
\end{equation}
In view of Proposition 4.1 (a) of \opcit and the isomorphism \eqref{cyclicequ1} of Proposition \ref{propcyclic1} \ie \begin{equation*}\label{sbibis4}
HC_{n}^{(j)}(X_\C)\cong H_{{\rm dR},\text{rel}}^m(X(\C),j+1)
\end{equation*}
the sequences \eqref{sbibis2} correspond, for $m=2j-n$ and $k=\C$, precisely to the exact sequences
\begin{equation*}\label{sbibis3}
 0   \to F^{j+1} H_{{\rm dR}}^m(X_{\C}) \stackrel{\iota}{\to} H_{{\rm dR}}^m(X_{\C})\to H_{{\rm dR},\text{rel}}^m(X_\C,j+1)\to 0.
\end{equation*}
The following result and the next Corollary~\ref{mainlemC} validates the above argued correspondence
\begin{prop}\label{agree} For a smooth, complex projective variety $X_\C$ and for any pair of integers $(n,j)\in E_d$, there is a short exact sequence
\begin{equation}\label{cp}
   0\to HP_{n+2}^{\rm real,(j+1)}(X_\C)\stackrel{S\circ \tau }{\longrightarrow}
   HC_n^{(j)}(X_\C)\to {\har_n}^{,(j)}(X_\C)\to 0.
\end{equation}
\end{prop}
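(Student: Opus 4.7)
The plan is to derive \eqref{cp} from the long exact sequence of Proposition \ref{proplongexactseq}, after decomposing into $\lambda$-eigenspaces (which is compatible with all of the arrows). The relevant segment is
\begin{equation*}
\har_{n+1}^{,(j)}(X_\C)\to HP_{n+2}^{{\rm real},(j+1)}(X_\C)\stackrel{S\circ\tau}{\to}HC_n^{(j)}(X_\C)\to \har_n^{,(j)}(X_\C)\to HP_{n+1}^{{\rm real},(j+1)}(X_\C)\stackrel{S\circ\tau}{\to}HC_{n-1}^{(j)}(X_\C).
\end{equation*}
The proposition therefore reduces to showing that the two occurrences of $S\circ\tau$ above, at indices $m=n$ and $m=n-1$, are both injective: the first delivers the injectivity asserted in \eqref{cp}, while the second forces the connecting morphism $\har_n^{,(j)}\to HP_{n+1}^{{\rm real},(j+1)}$ to be zero, giving the surjectivity $HC_n^{(j)}\twoheadrightarrow\har_n^{,(j)}$.

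Next I would identify $S\circ\tau:HP_{m+2}^{{\rm real},(j+1)}(X_\C)\to HC_m^{(j)}(X_\C)$ in Betti-cohomological terms. By Proposition \ref{realiden} together with diagram \eqref{Smap1}, the map $\tau$ corresponds under the isomorphism $\varsigma$ to the natural inclusion $H_B^{2j-m}(X(\C),\R(j+1))\hookrightarrow H_B^{2j-m}(X(\C),\C)$. By Proposition \ref{propcyclic1} combined with the description of \S\ref{sectcychom} using the truncation to $\Omega^{\leq j}_{X_\C}$, the periodicity map $S:HP_{m+2}^{(j+1)}(X_\C)\to HC_m^{(j)}(X_\C)$ corresponds to the Hodge quotient $H_B^{2j-m}(X(\C),\C)\twoheadrightarrow H_B^{2j-m}(X(\C),\C)/F^{j+1}$. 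Consequently $S\circ\tau$ is, up to an invertible scalar, the natural composite
\begin{equation*}
H_B^{2j-m}(X(\C),\R(j+1))\hookrightarrow H_B^{2j-m}(X(\C),\C)\twoheadrightarrow H_B^{2j-m}(X(\C),\C)/F^{j+1}.
\end{equation*}

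The concluding step is purely Hodge-theoretic, echoing the discussion of \S\ref{passingsect}. A class $\beta\in H_B^w(X(\C),\R(j+1))$ satisfies $\bar\beta=(-1)^{j+1}\beta$ under coefficient conjugation, so if moreover $\beta\in F^{j+1}$ then $\bar\beta\in F^{j+1}$ as well, hence $\beta\in F^{j+1}\cap\bar F^{j+1}=\bigoplus_{p,q\geq j+1,\,p+q=w}H^{p,q}$. The latter space is zero whenever $w<2(j+1)$. For $(n,j)\in E_d$ one has $n\geq 0$, so for $m\in\{n,n-1\}$ the weight $w=2j-m\leq 2j+1<2(j+1)$, and the composite is injective.

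The principal obstacle is the bookkeeping of the middle paragraph: matching the cyclic map $S\circ\tau$ with the composite of the Tate-twisted inclusion and the Hodge quotient. One must verify that the isomorphisms $\varsigma$ of Propositions \ref{lemperbis} and \ref{realiden} are compatible with the periodicity map in the $\lambda$-decomposed SBI sequence and with the truncation underlying Proposition \ref{propcyclic1}. Once that identification is established, the Hodge-theoretic injectivity above is immediate and \eqref{cp} follows.
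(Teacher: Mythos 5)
Your proof is correct and follows essentially the same route as the paper: extract \eqref{cp} from the $\lambda$-decomposed long exact sequence \eqref{seqcrux}, identify $S\circ\tau$ via Propositions \ref{lemperbis}, \ref{realiden} and diagram \eqref{Smap} with the twisted Betti inclusion into $H_B^{2j-m}(X(\C),\C)/F^{j+1}\cong HC_m^{(j)}(X_\C)$, and conclude injectivity from $F^{j+1}\cap\bar F^{j+1}=0$ in weight $2j-m<2(j+1)$, which is exactly the Hodge-theoretic input of \S\ref{passingsect} packaged in \eqref{seqshort}. The only difference is that you make explicit the injectivity at the index $m=n-1$ (needed for the surjectivity onto ${\har_n}^{,(j)}(X_\C)$), a point the paper leaves implicit.
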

\proof
Let $(n,j)\in E_d$, then by Lemma \ref{trivchange} one derives that $(2j-n,j-n)\in A_d$. One then has the short exact sequence \eqref{seqshort} (for $w=2j-n$ and $r=j+1$)
\begin{equation}\label{seqshort2}
 0\to H^{2j-n}_B(X(\C),\R(j+1))\to  H_{{\rm dR},\text{rel}}^{2j-n}(X_\C,j+1)\to H^{2j+1-n}_{\mathcal D}(X_\C,\R(j+1)) \to 0.
\end{equation}
Moreover, one has a canonical isomorphism (\cf \eqref{cyclicequ1} of Proposition \ref{propcyclic1})
\begin{equation*}\label{cyclicequ1bis}
    HC_n^{(j)}(X_\C)\cong H_{{\rm dR},\text{rel}}^{2j-n}(X_\C,j+1)
\end{equation*}
One then uses \eqref{Smap} and Proposition \ref{realiden} to identify the map $HP_{n+2}^{\rm real,(j+1)}(X_\C)\stackrel{S\circ \tau }{\longrightarrow}
   HC_n^{(j)}(X_\C)$ with the map
\begin{equation*}
H^{2j-n}_B(X(\C),\R(j+1))\stackrel{\subset}{\to}  H_{{\rm dR},\text{rel}}^{2j-n}(X_\C,j+1).
\end{equation*}
\endproof
\begin{cor}\label{mainlemC}
Let $X_\C$ be a smooth, complex projective variety of dimension $d$. Then for any pair of integers $(n,j)\in E_d$ one has
\begin{equation*}\label{mainequc}
    H^{2j+1-n}_{\cD}(X_\C,\R(j+1))\cong\har_n(X_\C)^{\Theta_0=j}.
\end{equation*}
Moreover $\har_n(X_\C)^{\Theta_0=j}=\{0\}$ for
$n\geq 0,\,j\geq 0$ and $(n,j)\notin E_d$.
\end{cor}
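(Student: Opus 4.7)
The first assertion is essentially immediate from the work that has already been done. For $(n,j)\in E_d$, Proposition \ref{agree} provides the short exact sequence
\[
0\to HP_{n+2}^{\rm real,(j+1)}(X_\C)\stackrel{S\circ\tau}{\longrightarrow} HC_n^{(j)}(X_\C)\to \har_n(X_\C)^{\Theta_0=j}\to 0,
\]
identifying the eigenspace $\har_n(X_\C)^{\Theta_0=j}$ with $\har_n^{,(j)}(X_\C)$. I would then invoke Proposition \ref{realiden} to identify the left-hand term with $H_B^{2j-n}(X(\C),\R(j+1))$, and Proposition \ref{propcyclic1} to identify the middle term with $H_{{\rm dR},\text{rel}}^{2j-n}(X_\C,j+1)$. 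By the very derivation of the map $S\circ\tau$ (cf.\ the commutative diagrams \eqref{Smap} and \eqref{Smap1}), this arrow corresponds exactly to the natural injection appearing in \eqref{seqshort2}, whose cokernel is $H^{2j+1-n}_{\cD}(X_\C,\R(j+1))$. This yields the desired isomorphism.

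For the vanishing assertion, I would use the long exact sequence \eqref{seqcrux}, again rewriting its terms via the identifications of Propositions \ref{propcyclic1} and \ref{realiden}: the term $HP_{n+k}^{\rm real,(j+1)}(X_\C)$ becomes $H^{2j+2-n-k}_B(X(\C),\R(j+1))$ and $HC_{n-k}^{(j)}(X_\C)$ becomes $H^{2j-n+k}_{dR}(X(\C))/F^{j+1}$. For $(n,j)\notin E_d$ with $n,j\ge 0$ the integer $2j-n$ is either $<0$ or $>2d$, and I would split accordingly. In the case $2j-n>2d$, every Betti and de Rham degree that appears exceeds $2d$, so all four neighbouring groups vanish and \eqref{seqcrux} gives $\har_n^{,(j)}(X_\C)=0$. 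In the case $2j-n<0$ with $n\ge 2j+2$, all the relevant cohomology degrees are strictly negative and the same argument works.

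The only step that needs a moment's care (and which I would flag as the delicate point) is the boundary case $n=2j+1$, for which $2j-n=-1$ but $2j+1-n=0$, so the $0$-th Betti group is non-trivial. Here $HC_n^{(j)}(X_\C)=0$ and $HP_{n+2}^{\rm real,(j+1)}(X_\C)=0$, while the fragment
\[
0\to \har_n^{,(j)}(X_\C)\to H^0_B(X(\C),\R(j+1))\to H^0_{dR}(X(\C))/F^{j+1}
\]
reduces the claim to the injectivity of the right-hand arrow. But for $j\ge 0$ one has $F^{j+1}H^0(X(\C))=0$, so this arrow is induced by the inclusion of coefficients $\R(j+1)\subset\C$ applied to $H^0(X(\C))\cong \C^c$ (with $c$ the number of connected components), and is visibly injective. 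This completes the verification and establishes the vanishing in all remaining cases. No genuinely new ingredient beyond Propositions \ref{agree}, \ref{propcyclic1}, and \ref{realiden} is required.
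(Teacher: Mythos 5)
Your proposal is correct and follows essentially the same route as the paper: the first statement by comparing the short exact sequence of Proposition \ref{agree} with \eqref{seqshort2} via Propositions \ref{propcyclic1} and \ref{realiden}, and the vanishing by degree bookkeeping in \eqref{seqcrux}, including the delicate boundary case $n=2j+1$, which the paper likewise settles by the injectivity of $H^0_B(X(\C),\R(j+1))\to H^0_{\rm dR}(X(\C))/F^{j+1}$ (stated there as $H^0_B\cap F^{j+1}=0$, equivalent to your observation that $F^{j+1}H^0=0$).
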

\proof The first statement follows by comparing the two exact sequences \eqref{seqshort2} and
\eqref{cp}. We prove the second statement.
By applying Proposition 3.1 of \cite{Weibelcris}, one derives for $j<\frac n2$ that $HC_n^{(j)}(X_\C)=0$. Moreover the same vanishing also holds for $2j-n>2d$ (\opcit Proposition 4.1). Proposition \ref{realiden} shows that for $2j-n>2d$ one has (since $2(j+1)-(n+2)>2d$ and $2(j+1)-(n+1)>2d$)
\begin{equation}\label{complvanish}
    HP_{n+2}^{\rm real,(j+1)}(X_\C)=0, \ \ HP_{n+1}^{\rm real,(j+1)}(X_\C)=0.
\end{equation}
Thus the exact sequence \eqref{seqcrux} shows that $\har_n(X_\C)^{\Theta_0=j}=\{0\}$ for
$2j-n>2d$. For $j<\frac n2$, if  $j<\frac{ n-1}{2}$
then Proposition \ref{realiden} shows that \eqref{complvanish} holds. Let us consider now the limit case $n=2j+1$. In this case one has $2(j+1)-(n+1)=0$ and thus $
    HP_{n+1}^{\rm real,(j+1)}(X_\C)=H_B^0(X(\C),\R(j+1))\neq 0$
but the next map in  \eqref{seqcrux}, is the same as the map
\begin{equation*}
    H_B^0(X(\C),\R(j+1))\to H_{{\rm dR},\text{rel}}^{2j-(n-1)}(X_\C,j+1)\cong HC_{n-1}^{(j)}(X_\C)
\end{equation*}
and this homomorphism is injective since $H_B^0\cap F^{j+1}=0$. Thus \eqref{seqcrux} shows that $\har_n(X_\C)^{\Theta_0=j}=\{0\}$.
\endproof

\subsection{The case of real schemes}\label{sectreal}

 Next, we consider the case of a smooth, complex projective variety  defined over $\R$, $X_\R$. In this case following \cite{Weibelcris} $X_\R$ admits a cyclic homology theory over $\R$.
\begin{lem}\label{extscal}
Let $X_\R$ be a smooth, real projective variety, and denote by  $X_\C=X_\R\otimes_\R\C$ the variety obtained by extension of scalars. Then
\begin{equation}\label{extscal1}
    HC_n(X_\C)=HC_n(X_\R)\otimes_\R\C \qqq n\in\Z.
\end{equation}
\end{lem}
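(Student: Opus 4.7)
The plan is to reduce the statement to the elementary fact that cyclic homology of commutative algebras commutes with flat base change, and then to propagate this to the sheafified definition of cyclic homology of schemes via the flatness of $\C$ over $\R$.

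First I would work on the affine patches. For a commutative $\R$-algebra $B$, the key tensor identity is
\begin{equation*}
(B\otimes_\R\C)^{\otimes_\C(n+1)} \;\cong\; B^{\otimes_\R(n+1)}\otimes_\R\C \qquad \forall\,n\geq 0,
\end{equation*}
which one verifies by induction using $(B\otimes_\R\C)\otimes_\C(B\otimes_\R\C)\cong B\otimes_\R B\otimes_\R\C$. The Hochschild boundary $b$ and the Connes operator $B$ are defined by $\R$-linear formulas on tensor powers, so extending scalars carries them to their $\C$-linear counterparts. Hence the entire mixed complex $(C_*(B\otimes_\R\C),b,B)$ computing cyclic homology over $\C$ is obtained from the corresponding mixed complex $(C_*(B),b,B)$ over $\R$ by tensoring with $\C$ over $\R$. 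Since $\C$ is a free (in particular, flat) $\R$-module of rank $2$, tensoring is exact and preserves direct sums and products, so it commutes with the passage to the total $(b,B)$-bicomplex and with taking homology. This yields the affine case
\begin{equation*}
HC_n(B\otimes_\R\C) \;\cong\; HC_n(B)\otimes_\R\C.
\end{equation*}

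Next I would globalize to the scheme $X_\R$. Following the recollection in \S\ref{sectcychom} and \cite{weibel}, $HC_n(X_\R)$ is the Cartan--Eilenberg hypercohomology $\mathbb{H}^{-n}(X_\R,\,{\rm Tot}\,\cB_{**}(\cM))$ of the sheafified mixed complex $\cM$ obtained by sheafifying the presheaves $U\mapsto \cO_{X_\R}(U)^{\otimes_\R(\ast+1)}$. The base-change morphism $f:X_\C\to X_\R$ is affine and flat (indeed faithfully flat of rank $2$), and the tensor identity above implies that the analogous sheafified mixed complex on $X_\C$ is canonically identified with $f^*\cM\otimes_\R\C$, viewed as a mixed complex of sheaves on $X_\C$. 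Concretely, choosing a Zariski affine cover $\cU=\{U_i\}$ of $X_\R$, the \v{C}ech bicomplex computing $\mathbb{H}^{-n}(X_\R,{\rm Tot}\,\cB_{**}(\cM))$ base-changes termwise under $-\otimes_\R\C$ to the \v{C}ech bicomplex for the pulled-back cover on $X_\C$, because $U_i\mapsto f^{-1}(U_i)$ is an affine cover and the affine identification of the previous paragraph applies to each piece.

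Finally, since $\C$ is flat over $\R$, the functor $-\otimes_\R\C$ is exact and commutes with finite products and with the total complex formation used in Cartan--Eilenberg hypercohomology; it therefore commutes with the passage from the \v{C}ech-bicomplex to its cohomology. This delivers
\begin{equation*}
HC_n(X_\C) \;=\; \mathbb{H}^{-n}\bigl(X_\C,\,{\rm Tot}\,\cB_{**}(\cM)\otimes_\R\C\bigr) \;\cong\; \mathbb{H}^{-n}\bigl(X_\R,\,{\rm Tot}\,\cB_{**}(\cM)\bigr)\otimes_\R\C \;=\; HC_n(X_\R)\otimes_\R\C,
\end{equation*}
as required. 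The only real obstacle is bookkeeping: one must check carefully that the Cartan--Eilenberg injective resolution used for the hypercohomology on $X_\R$ remains quasi-isomorphic after base change (here one either invokes flatness of $f$, or, equivalently, replaces injective resolutions by the \v{C}ech computation on an affine cover, where the claim is manifest). Aside from this routine verification, the argument is purely formal.
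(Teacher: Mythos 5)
Your argument is correct, but it takes a different route from the paper. The paper's proof is a two-line reduction: it invokes Proposition 4.1 of \cite{Weibelcris}, which (for a smooth projective variety over a field of characteristic zero) expresses $HC_n$ through the hypercohomology of the truncated de Rham complexes and hence through the Hodge groups, and then quotes flat base change for coherent cohomology, $H^q(X_\C,\Omega^p_{X_\C})=H^q(X_\R,\Omega^p_{X_\R})\otimes_\R\C$. You instead prove the base-change compatibility at the chain level: the mixed complex of $B\otimes_\R\C$ is the scalar extension of that of $B$, and since $\C$ is finite free over $\R$ the functor $-\otimes_\R\C$ is exact and commutes with the products entering the total complexes, so the identification passes to the sheafified theory via a \v{C}ech computation on an affine cover (or, more cleanly, via the fact that $f:X_\C\to X_\R$ is affine, so $f_*$ of the sheafified mixed complex of $X_\C$ is $\cM\otimes_\R\C$ and $\mathbb H^{-n}(X_\C,-)=\mathbb H^{-n}(X_\R,f_*-)$; note that your expression ``$f^*\cM\otimes_\R\C$'' has a redundant $\otimes_\R\C$, since $f^*$ already extends scalars, though your \v{C}ech description makes the intended identification clear). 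What your approach buys is generality: it needs neither smoothness nor projectivity nor the Hodge-theoretic input, only quasi-coherence of the Hochschild sheaves (\cf \cite{WG}) and the finite cohomological dimension needed to control the unbounded Cartan--Eilenberg hypercohomology, and it is the scheme-level analogue of the standard flat base-change property of $HC_*$ for algebras (\cf \cite{Loday}). What the paper's route buys is brevity, since the machinery of \cite{Weibelcris} is already in place and is used throughout the surrounding sections.
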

\proof The result follows from Proposition 4.1 of \cite{Weibelcris} and the well known decomposition
\begin{equation}\label{drdrbis}
  H^q(X_\C,\Omega_{X_\C}^p)=  H^q(X_\R,\Omega_{X_\R}^p)\otimes_\R\C.
\end{equation}
\endproof
From Lemma \ref{extscal} one derives the definition of a canonical anti-linear involution $\bar F_\infty={\rm id}\otimes \bar{~}$ on $HC_n(X_\C)$, by implementing the complex conjugation $\bar{~}$ on $\C$. This involution  should not be confused with the $\C$-linear involution $F_\infty$ used in \cite{Deninger91}. By Proposition 1.4 of \cite{De5}, the corresponding anti-linear involution $\bar F_\infty$ acting on the Betti cohomology $H^*(X_\R,\C) = H^*(X_\R,\R)\otimes_\R\C$  is  $F_\infty\otimes\bar{~}$. Note that $\bar F_\infty$ preserves the Hodge spaces $H^q(X_\R,\Omega_{X_\R}^p)$.\newline
Proposition \ref{realiden} implies that the subspace  of $HC_n^{(j)}(X_\R)$ which is the image of $HP_{n+2}^{\rm real,(j+1)}(X_\R)$ through the composite $S\circ \tau$ is globally invariant under $\bar F_\infty$ thus the latter operator descends on the quotient.

We can now introduce the  definition of the archimedean cyclic homology for real varieties.
\begin{defn}\label{realvar}
Let $X_\R$ be a smooth real projective variety and let  $X_\C=X\otimes_\R\C$ be the associated complex variety. Define the archimedean cyclic homology of $X_\R$ as
\begin{equation}\label{extscal1}
    \har_n(X_\R):=\har_n(X_\C)^{\bar F_\infty={\rm id}}.
\end{equation}
\end{defn}
Note that one can re-write \eqref{extscal1} in the equivalent form
\begin{equation}\label{extscal2}
    \har_n(X_\R)=HC_n(X_\R)/\left( HC_n(X_\R)\cap {\rm Im}(S\circ \tau)   \right)
\end{equation}
since it is easy to see that the natural map from the right hand side of \eqref{extscal2} to the left side is bijective.
\begin{cor}\label{mainlem}
Let $X_\R$ be a smooth real projective variety of dimension $d$. Then for a pair of integers $(n,j)\in E_d$ we have
\begin{equation}\label{mainequ}
    H^{2j+1-n}_{\cD}(X_{\R},\R(j+1))=\left(\har_n(X_{\R})\right)^{\Theta_0=j}.
\end{equation}
Moreover $\har_n(X_{\R})^{\Theta_0=j}=\{0\}$ for
$n\geq 0,\,j\geq 0$ and $(n,j)\notin E_d$.
\end{cor}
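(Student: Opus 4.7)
The plan is to deduce Corollary \ref{mainlem} from its complex counterpart Corollary \ref{mainlemC} by descent, applying it to $X_\C = X_\R\otimes_\R\C$ and taking fixed points under the anti-linear conjugate Frobenius $\bar F_\infty$. The substance of the proof is to verify that the isomorphism $\har_n(X_\C)^{\Theta_0=j}\cong H^{2j+1-n}_\cD(X_\C,\R(j+1))$ of Corollary \ref{mainlemC} is equivariant with respect to $\bar F_\infty$ on the cyclic side and the de Rham conjugation on the Deligne side.

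First I would observe, following Lemma \ref{extscal} and the remarks after it, that $\bar F_\infty=\id\otimes\bar{~}$ acts naturally on each term of the exact sequences \eqref{cp} and \eqref{seqshort2}. On the cyclic side, the constructions of $\pi_X$ (see Appendix \ref{sectweil}) and of the inclusion $C^\infty(X_\sss,\R)\subset C^\infty(X_\sss,\C)$ underlying Definition \ref{maptau} are visibly compatible with complex conjugation on coefficients, so $\bar F_\infty$ extends to an anti-linear involution on the whole quasi-pullback complex defining $PC^*_\rrr(X_\C)$ and on the cone complex of Definition \ref{realhar}. The $\lambda$-operations being defined over $\R$, the involution preserves the $\lambda$-decomposition and commutes with $\Theta_0$. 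On the Deligne side, the same discussion (\cite{De5}, Proposition 1.4) identifies $\bar F_\infty$ with the de Rham conjugation under the isomorphisms of Proposition \ref{propcyclic1} and Proposition \ref{realiden}, because both involutions are induced by the single datum of the real structure on $X_\R$ together with complex conjugation on $\C$.

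Next I would take $\bar F_\infty$-fixed points throughout. Since $\bar F_\infty$ is an involution and we work in characteristic zero, taking fixed points is an exact functor on $\C$-vector spaces. Applied to the equivariant short exact sequence \eqref{cp}, it yields
\begin{equation*}
0\to HP_{n+2}^{\rrr,(j+1)}(X_\C)^{\bar F_\infty=\id}\to HC_n^{(j)}(X_\R)\to \har_n^{,(j)}(X_\C)^{\bar F_\infty=\id}\to 0,
\end{equation*}
and applied to the equivariant sequence \eqref{seqshort2} it yields
\begin{equation*}
0\to H^{2j-n}_B(X(\C),\R(j+1))^{\bar F_\infty}\to H^{2j-n}_{{\rm dR},\text{rel}}(X_\C,j+1)^{\bar F_\infty}\to H^{2j+1-n}_\cD(X_\C,\R(j+1))^{\bar F_\infty}\to 0.
\end{equation*}
By definition \eqref{realrealdefn} the right-hand terms are $H^{2j+1-n}_\cD(X_\R,\R(j+1))$, by Definition \ref{realvar} combined with \eqref{extscal2} the middle cyclic term is $\har_n^{,(j)}(X_\R)$, and the equivariant version of the proof of Proposition \ref{agree} identifies the two sequences term by term. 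This proves the first assertion for $(n,j)\in E_d$. For the vanishing statement one simply notes that if $\har_n(X_\C)^{\Theta_0=j}=0$ then its subspace of $\bar F_\infty$-fixed points is also zero, so Corollary \ref{mainlemC} gives $\har_n(X_\R)^{\Theta_0=j}=0$ for $(n,j)\notin E_d$.

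The main obstacle is the bookkeeping in the first step: one must check that every map in the diagrams of Sections \ref{sectrealcyc} and \ref{sectharcyc} (most notably the Tate-twisted map $\tau$ and the quasi-pullback defining $PC^*_\rrr(X_\C)$) is genuinely $\bar F_\infty$-equivariant once $\bar F_\infty$ has been lifted from $HC_*$ to the relevant cochain complexes. Once equivariance is established, everything else is formal descent.
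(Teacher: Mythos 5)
Your proposal is correct and follows essentially the same route as the paper, whose proof of Corollary \ref{mainlem} is exactly the adaptation of Corollary \ref{mainlemC} by descent along the $\bar F_\infty$-fixed points, using Definition \ref{realvar}, \eqref{extscal2} and the equivariance of the sequences \eqref{cp} and \eqref{seqshort2} (the invariance of ${\rm Im}(S\circ\tau)$ under $\bar F_\infty$ being noted just before Definition \ref{realvar}). Your added remarks on exactness of taking invariants in characteristic zero and on the equivariance bookkeeping simply make explicit what the paper leaves to the reader.
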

\proof The proof is easily adapted from that of Corollary \ref{mainlemC}.\endproof

Let us consider the simplest example of $X_\R=\Spec \R$.  In this case the exact sequence \eqref{cp} gives for $X_\C=\Spec\C$
\begin{equation*}\label{cp1}
   0\to HP_{n+2}^{\rrr}(\Spec\C)\stackrel{S\circ \tau}{\longrightarrow}
   HC_n(\Spec\C)\to \har_n(\Spec\C)\to 0.
\end{equation*}
One therefore derives
\begin{equation*}\label{cp2}
    \har_{2m}(\Spec\C)=\C/\left((2\pi i)^{m+1}\R\right)
\end{equation*}
while the odd homology groups all vanish. This is in agreement with the well-known result for real Deligne cohomology. For $X_\R=\Spec \R$, the de Rham conjugation $\bar F_\infty$ is the ordinary complex conjugation, and we claim that it admits a non-zero fixed vector in $ \har_{2m}(\Spec\C)$ if and only if $m$ is even. Indeed, in that case the quotient $\C/\left((2\pi i)^{m+1}\R\right)$ coincides with $\R$, while it reduces to $i\R$ when $m$ is odd. Thus we derive
\begin{equation*}\label{cp3}
    \har_{4m}(\Spec\R)=\C/\left((2\pi i)^{2m+1}\R\right), \ \ \har_{n}(\Spec\R)=\{0\}, \quad \text{for}~ n\not\equiv 0\, (4).
\end{equation*}
  It follows that the operator $\Theta=\Theta_0-\Gamma$ acts on $\har_{4m}(\Spec\R)$ by multiplication by $2m-4m$ and hence its spectrum consists of all negative  even  integers $2k\leq 0$. One then derives  the formula for the local (archimedean) Euler factor
 \begin{equation*}\label{caser}
   \Gamma_\R(s)^{-1}=\prod_{k\leq 0}\left(\frac{s-2k}{2\pi}\right).
 \end{equation*}

\begin{rem}\label{denig}{\rm  The archimedean cohomology defined in \cite{Deninger91} differs from the real Deligne cohomology in the case of real places and when the weight $w$ is odd. Indeed, by \cite{Deninger91} (\cf proof of Proposition 5.1) one gets in such case (we use the notation of \opcit)
\begin{equation*}
    \left(H_{\rm ar}^w\right)^{\Theta=m}=
    H^{w+1}_{\cD}(X_{/\C},\R(n))^{\bar F_\infty=-id}
\end{equation*}
The real archimedean cyclic homology on the other hand does not have this mismatch and it produces exactly the real Deligne cohomology.
}
\end{rem}

\section{Proof of Theorem \ref{MT}}\label{sectmain}

\begin{thm}\label{main} Let $X$ be a smooth, projective variety of dimension $d$ over an algebraic number field $K$ and let $\nu\vert\infty$ (\ie $K_\nu = \C,\R$) be an archimedean place of $K$.
Then the action of the operator $\Theta=\Theta_0-\Gamma$ on the archimedean cyclic homology of $X_\nu$ satisfies the following formula
\begin{equation}\label{dettheta}
    \prod_{0\leq w \leq 2d} L_\nu(H^w(X),s)^{(-1)^{w+1}}=\frac{det_\infty(\frac{1}{2\pi}(s-\Theta))|_{\har_{\rm even}(X_\nu)}}{
    det_\infty(\frac{1}{2\pi}(s-\Theta))|_{\har_{\rm odd}(X_\nu)}},\qquad s\in\R.
\end{equation}
The left-hand side of the formula describes the product of Serre's archimedean local factors in the complex $L$-function of $X$ and $det_\infty$ denotes the regularized determinant (\cf\eg\cite{RS},\cite{Deninger91}).
\end{thm}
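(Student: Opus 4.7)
The strategy is to compute the right-hand side of \eqref{dettheta} eigenvalue-by-eigenvalue for $\Theta$ acting on $\har_*(X_\nu)$, and to match the result term-by-term against Serre's Gamma-factor presentation through Beilinson's formula \eqref{ordpole}. The whole argument is essentially an exercise in unpacking the identifications already established in Corollaries~\ref{mainlemC} and \ref{mainlem} once one passes between the two index sets $E_d$ and $A_d$ via Lemma~\ref{trivchange}.

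First I make the spectrum of $\Theta$ on $\har_*(X_\nu)$ fully explicit. Since $\Theta=\Theta_0-\Gamma$, the $\lambda$-decomposition gives $\Theta\equiv j-n$ on $\har_n^{(j)}(X_\nu)$; by Corollaries~\ref{mainlemC} and \ref{mainlem} the eigenspace is zero unless $(n,j)\in E_d$, in which case its real dimension equals $\dim_\R H^{2j+1-n}_{\cD}(X_\nu,\R(j+1))$. Making the change of variables $w=2j-n$, $m=j-n$ of Lemma~\ref{trivchange}, the eigenvalue becomes $\Theta=m$, the Deligne group becomes $H^{w+1}_{\cD}(X_\nu,\R(w+1-m))$ with $(w,m)\in A_d$, and the parity of $n=w-2m$ coincides with the parity of $w$. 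Hence $\har_{\rm even}(X_\nu)$ (resp.\ $\har_{\rm odd}(X_\nu)$) aggregates exactly those pairs with $w$ even (resp.\ odd), and by Beilinson's formula \eqref{ordpole} the multiplicity of the eigenvalue $m$ of $\Theta$ inside $\har_w^{\!\text{-parity}}(X_\nu)$ coming from the Hodge weight $w$ is precisely $\text{ord}_{s=m}L_\nu(H^w(X),s)^{-1}$.

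Next I organize the regularized determinant accordingly. For $\epsilon\in\{0,1\}$ one obtains
\begin{equation*}
\det\nolimits_\infty\!\left(\tfrac{1}{2\pi}(s-\Theta)\right)\Big|_{\har_{\epsilon}(X_\nu)}
\;=\;\prod_{\substack{0\leq w\leq 2d\\ w\equiv \epsilon\,(2)}}\;\prod_{m\leq w/2}^{\rm reg}\left(\frac{s-m}{2\pi}\right)^{\text{ord}_{s=m}L_\nu(H^w(X),s)^{-1}},
\end{equation*}
where the outer product is finite and the inner one is an infinite zeta-regularized product. The identification of the inner regularized product with $L_\nu(H^w(X),s)^{-1}$ is the classical Lerch-type relation $\Gamma_\C(s-a)^{-1}=\prod_{k\geq 0}^{\rm reg}((s-a+k)/2\pi)$ (and its real analogue for $\Gamma_\R$) applied to each shifted Gamma factor appearing in Serre's formula from \cite{Se3}: matching pole locations with multiplicities determines the product up to an exponential of a polynomial, which is pinned down by the standard normalization of $\det_\infty$ as used in \cite{Deninger91}. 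Taking the ratio of the $\epsilon=0$ and $\epsilon=1$ determinants then converts the parity partition into the sign exponent $(-1)^{w+1}$ and yields \eqref{dettheta}.

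For a real place $\nu$ with $K_\nu=\R$, Lemma~\ref{extscal} and Definition~\ref{realvar} realize $\har_*(X_\nu)$ as the $\bar F_\infty$-fixed subspace of $\har_*(X_\nu\otimes_\R\C)$; since $\bar F_\infty$ commutes with both $\Theta_0$ and $\Gamma$ the eigenvalue analysis of $\Theta$ descends to the fixed-point subspace, and the defining identity \eqref{realrealdefn} ensures that Beilinson's formula \eqref{ordpole} still computes the multiplicities on the nose. The main technical obstacle is the regularization step in the preceding paragraph: one must verify that exchanging the finite outer product with the zeta-regularized inner product produces no spurious exponential correction. This is controlled by the uniform bound $m\leq w/2\leq d$ on the eigenvalue shifts across the (finitely many) Hodge weights, which lets one perform a single zeta-regularization over the combined spectrum and thereby reduces the question to the already-known regularized-determinant formulas for $\Gamma_\C$ and $\Gamma_\R$.
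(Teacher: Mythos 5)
Your proposal is correct and follows essentially the same route as the paper: the heart in both cases is the identification, via Lemma~\ref{trivchange}, Beilinson's formula \eqref{ordpole} and Corollaries~\ref{mainlemC} and \ref{mainlem}, of the spectral multiplicities of $\Theta=\Theta_0-\Gamma$ on $\har_{\rm even/odd}(X_\nu)$ with the pole orders of the local factors, together with the parity matching $n\equiv w\,(2)$. The only difference is cosmetic: where you unfold the Lerch-type regularized product identities for $\Gamma_\C$ and $\Gamma_\R$ and the multiplicativity of $\det_\infty$ over the finite weight decomposition, the paper delegates exactly this normalization issue to \cite{Deninger91}, Proposition 2.1, reducing the identity \eqref{dettheta} to the equality of divisors of zeros and poles on both sides.
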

\proof By \cite{Deninger91}, Proposition 2.1, it is enough to show that both sides of \eqref{dettheta} have the same divisor of zeros and poles. We fix a Hodge weight $w$ and consider the archimedean local factor $L_\nu(H^w(X),s)$ associated first to a real place $\nu$ of $K$ (\ie $K_\nu = \R$). This function is specified by the multiplicities of its poles and it is well known that these latter only occur at integer values $s=m\leq \frac w2$ and their multiplicity is described by the following formula
\begin{equation}\label{ordpolebis}
    {\rm ord}_{s=m}L_\nu(H^w(X),s)^{-1}={\rm dim}_\R H^{w+1}_{\cD}(X_\nu,\R(w+1-m)).
\end{equation}
Here $X_\nu=X_{/\R}$ and the corresponding real Deligne cohomology groups are defined in \eqref{realrealdefn}.
By Lemma \ref{trivchange}, the pairs $(w,m)$ which enter in \eqref{ordpolebis} correspond bijectively to pairs $(n,j)\in E_d$, for $w=2j-n$, $m=j-n$. Using the identification obtained in Corollary \ref{mainlem} we also know that
\begin{equation}\label{mainequ}
    H^{2j+1-n}_{\cD}(X_{\R},\R(j+1))=\left(\har_n(X_{\R})\right)^{\Theta_0=j}\qqq (n,j)\in E_d.
\end{equation}
Hence by combining \eqref{ordpolebis} with  \eqref{mainequ} we get
\begin{equation*}\label{ordpole1}
    {\rm ord}_{s=m}L_\nu(H^w(X),s)^{-1}={\rm dim}_\R \left(\har_n(X_{\R})\right)^{\Theta_0=j}.
\end{equation*}
Thus, at a real place $\nu$, the divisor $\Delta$ of the function
\begin{equation*}
    \zeta_\nu(X)= \prod_{0\leq w \leq 2d} L_\nu(H^w(X),s)^{(-1)^{w+1}}
\end{equation*}
is given by the formula
\begin{align}\label{delta}
    \Delta&=\sum_{(w,m)\in A_d}(-1)^w {\rm ord}_{s=m}L_\nu(H^w(X),s)^{-1}\delta
    =\sum_{(n,j)\in E_d}(-1)^n {\rm dim}_\R \har_n(X_{\R})^{\Theta_0=j}\delta_m\\
    &=\sum_{n,j\geq 0}(-1)^n {\rm dim}_\R \har_n(X_{\R})^{\Theta_0 =j }\delta_{j-n}.\notag
\end{align}
This because $w\equiv n$ (mod $2$) and $j-n=m$ and because $\har_n(X_{\R})^{\Theta_0=j}=\{0\}$ for
$n\geq 0,\,j\geq 0$ and $(n,j)\notin E_d$ (Lemma \ref{mainlem}). We rewrite the right hand side of \eqref{delta} as
\begin{equation*}
    \sum_{n,j\geq 0}(-1)^n {\rm dim}_\R \har_n(X_{\R})^{\Theta_0 =j }\delta_{j-n}
    =\sum_{n\geq 0,\, k\in \Z}(-1)^n {\rm dim}_\R \har_n(X_{\R})^{\Theta_0-\Gamma =k }\delta_{k}
\end{equation*}
This in turns gives, using the operator $\Theta= \Theta_0-\Gamma$
\begin{equation*}\label{main2}
     \Delta=\sum_{k\in \Z} {\rm dim}_\R \har_{\rm even}(X_{\R})^{\Theta =k }\delta_{k}-\sum_{k\in \Z} {\rm dim}_\R \har_{\rm odd}(X_{\R})^{\Theta =k }\delta_{k}
\end{equation*}
which represents the signed spectral measure of the operator $\Theta$ acting on $\har_*(X_\R)$ with the convention that the index $n$ in $\har_n(X_\R)$ is always restricted to be $n\geq 0$.

For a complex place one proceeds in the same way, using Corollary \ref{mainlemC} instead of Corollary \ref{mainlem}.
\endproof

\begin{rem}\label{more}{\rm The operators $\Theta_0$ and $\Gamma$ commute and the proof of Theorem \ref{main} shows that the decomposition of the alternate product on the left hand side of equation \eqref{dettheta} into factors associated to specific weights corresponds to the spectral decomposition of the operator $2\Theta_0-\Gamma$, which commutes with $\Theta$. Thus, by restricting the right hand side of \eqref{dettheta} to the spectral projection $2\Theta_0-\Gamma=w$, one recovers precisely the factor $L_\nu(H^w(X),s)^{(-1)^{w+1}}$.
In this way we refine \eqref{dettheta}.
}\end{rem}

\section{Appendix: Weil restriction and the  functor $X\mapsto {}_*X$}\label{sectweil}

Let $X$ be a smooth, projective complex variety viewed as a scheme and $X_\aaa$ its set of closed points.
 The results of Section \ref{sectrealcyc} are based on the canonical morphism of locally ringed spaces
$\mu:X_\sss\to X_\aaa$ that links $X_\aaa$ to the associated $C^\infty$-manifold $X_\sss$, viewed as the locally ringed space with structure sheaf $\cO_{X_\sss}(U):=C^\infty(U,\C)$ for every open subset $U\subset X_\sss$. Although $\mu$ is very useful and plays an important role in our construction it does not belong properly to the realm of algebraic geometry. In particular the locally ringed space $X_\sss$ is not a scheme and differs substantially from the affine scheme $\Spec(A)$, with $A=\cO(X_\sss)=C^\infty(X_\sss,\C)$. In this section we show how one may {\em extend} $\mu$ to a {\em morphism of schemes}
\begin{equation*}
    \pi_X:\Spec(A)\to X.
\end{equation*}

In fact, we shall construct a pair made of a natural additive functor $X\mapsto {}_*X$ from the category of schemes over $\C$ to the category of schemes over $\R$ and a natural transformation of functors
\begin{equation*}
    p_X: ({}_*X)_\C\to X, \ \ ({}_*X)_\C:= {}_*X\otimes_\R \C
\end{equation*}
which fulfills the following properties when applied to projective varieties  over $\C$:

$(i)$~The {\em real} scheme ${}_*X$ is affine.\newline
$(ii)$~The morphism $p_X$ is bijective on complex points, \ie $p_X$ induces a bijection ${}_*X(\C)\cong X(\C)$.\newline
$(iii)$~If $X$ is also smooth, the morphism of locally ringed spaces $X_\sss\to X_\aaa$ {\em extends canonically} to a morphism of schemes
 \begin{equation*}
 \pi_X:\Spec(\cO(X_\sss))\stackrel{\gamma^*}{\to} ({}_*X)_\C \stackrel{p_X}{\to} X
\end{equation*}
 where $\gamma: \cO({}_*X)\to \cO(X_\sss)=C^\infty(X_\sss,\C)$ is  the  Gelfand transform of the algebra
 $B=\cO({}_*X)$ of global regular sections of the affine scheme ${}_*X=\Sp(B)$:
\begin{equation*}
    B=\cO({}_*X)\ni a\mapsto \gamma(a)=\hat a, \ \ \hat a(\chi):=\chi(a)\qqq \chi \in \Hom_\C(B,\C).
\end{equation*}

\subsection{The functor $X\mapsto {}_*X$}

We start by recalling the following well known construction in algebraic geometry. Let $Z$ be a scheme and let $\cL$ be an invertible sheaf on $Z$. Given a global section $s\in \cL(Z)$ of $\cL$, one defines the following open subset $Z_s\subset Z$
\begin{equation*}\label{zs}
    Z_s:=\{z\in Z\mid \cL_z=s_z\cO_{Z,z}\}.
\end{equation*}

It is a standard fact that the inclusion $Z_s \hookrightarrow Z$ is an affine morphism, thus if $Z$ is an affine scheme then $Z_s$ is an affine open subset of $Z$ (\cf\cite{GW} Example 12.4(4)).

Now, let $X_{}$ be a scheme over $\C$ and let $Y_\R={\rm Res}_{\C/\R}X_{}$ be the Weil restriction. Then define
\begin{equation*}
   Z_{}:= Y_\R\times_\R\C=\prod_{\rho}X_{}^\rho
\end{equation*}
where the product of schemes  is over $\C$, $\rho$ varies between the two $\R$-linear embeddings $\C\to \C$ (\ie the identity and the complex conjugation $\sigma$) and $X_{}^\rho$ is the complex scheme obtained from $X_{}$ by extension of scalars using $\rho:\C\to \C$. The two projections $p_\rho: Z_{}\to X_{}^\rho$ are both morphisms of schemes. The action of $\Gal(\C/\R)$ on $Z_{}$ determines a canonical isomorphism $\tilde \sigma: Z\stackrel{\sim}{\to} Z^\sigma$.

Let $\cL$ be an invertible sheaf on $X_{}$ and $\cL^\sigma$ the corresponding invertible sheaf over $X_{}^\sigma$. Furthermore, let $\tilde \cL$ be the invertible sheaf on  $Z_{}$
 \begin{equation*}\label{tildeL}
    \tilde \cL:=p^*_{{\rm id}}\cL\otimes p_\sigma^*\cL^\sigma.
\end{equation*}
The action of $\Gal(\C/\R)$ on $Z_{}$ determines a canonical  anti-linear involution on sections of $\tilde \cL$ on any $\tilde \sigma$-invariant open set $U\subset Z$,
\begin{equation*}\label{invol}
     \Gamma(U,\tilde \cL)\ni \xi\mapsto \xi^* \in \Gamma(U,\tilde \cL)
\end{equation*}
such that, in particular
\begin{equation*}\label{invol1}
  (\xi\otimes \eta^\sigma)^*=\eta\otimes \xi^\sigma  \qqq\xi, \eta \in \Gamma(U,p^*_{{\rm id}}\cL).
\end{equation*}
Let $\{\xi_j\}_{j\in J}\in \Gamma(Z_{},\tilde \cL)$ be a family of global sections.
 Consider the open subset of $Z_{}$
 \begin{equation}\label{invol2}
    \Omega(\cL, \{\xi_j\})=Z_s, \qquad s:=\sum_{j\in J} \xi_j\xi_j^*\in \Gamma(Z_{},\tilde\cL^{\otimes 2}).
\end{equation}
The subsets $\Omega(\cL, \{\xi_j\})$ satisfy the following  property
\begin{lem} \label{yprod} Let  $\{\xi_j\}_{j\in J}$ and $\{\eta_k\}_{k\in I}$ be families of global sections of two invertible sheaves $\cL_1$ and $\cL_2$ on $Z_{}$, then
\[
\Omega(\cL_1\otimes \cL_2, \{\xi_j\otimes \eta_k\})=\Omega(\cL_1, \{\xi_j\})\cap \Omega(\cL_2, \{\eta_k\}).
\]
\end{lem}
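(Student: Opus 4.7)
\textbf{Proof proposal for Lemma \ref{yprod}.}

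The plan is to reduce the identity to two elementary facts: first, that the anti-linear involution $\xi\mapsto \xi^*$ is multiplicative under the natural identification $\widetilde{\cL_1\otimes \cL_2}\cong \tilde \cL_1\otimes \tilde \cL_2$; and second, that for invertible sheaves the locus where a decomposable section generates is the intersection of the generating loci of its two factors.

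First I would check that under the canonical isomorphism
\[
\widetilde{\cL_1\otimes\cL_2}=p^*_{\rm id}(\cL_1\otimes\cL_2)\otimes p^*_\sigma(\cL_1\otimes\cL_2)^\sigma \;\cong\; \tilde\cL_1\otimes\tilde\cL_2
\]
obtained by reordering tensor factors, and for every $\tilde\sigma$-invariant open $U\subset Z$, the involution on $\widetilde{\cL_1\otimes \cL_2}$ satisfies
\[
(\xi\otimes\eta)^{*}=\xi^{*}\otimes\eta^{*}\qquad \forall\,\xi\in \Gamma(U,\tilde\cL_1),\ \eta\in\Gamma(U,\tilde\cL_2).
\]
It suffices to verify this on decomposable sections of the form $\xi=a\otimes b^\sigma$ and $\eta=c\otimes d^\sigma$, where the formula $(a\otimes b^\sigma)^*=b\otimes a^\sigma$ from \eqref{invol1} makes both sides equal $(b\otimes d)\otimes(a\otimes c)^\sigma$.

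Granting this, the defining section of $\Omega(\cL_1\otimes\cL_2,\{\xi_j\otimes\eta_k\})$ computes as
\[
\sum_{j,k}(\xi_j\otimes\eta_k)(\xi_j\otimes\eta_k)^{*} \;=\;\sum_{j,k}(\xi_j\xi_j^{*})\otimes(\eta_k\eta_k^{*})\;=\;s_1\otimes s_2,
\]
where $s_i$ is the defining section of $\Omega(\cL_i,\{\cdot\})$ as in \eqref{invol2}. Thus the lemma amounts to the equality $Z_{s_1\otimes s_2}=Z_{s_1}\cap Z_{s_2}$ for global sections of the two invertible sheaves $\tilde\cL_1^{\otimes 2}$ and $\tilde\cL_2^{\otimes 2}$ on $Z$. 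This last fact is routine: at a stalk $z$, pick local generators $e_1,e_2$ of $\tilde\cL_{1,z}^{\otimes 2}$ and $\tilde\cL_{2,z}^{\otimes 2}$ and write $s_1=a_1e_1$, $s_2=a_2e_2$; then $e_1\otimes e_2$ generates the stalk of the tensor product and $s_1\otimes s_2=(a_1a_2)(e_1\otimes e_2)$ generates it iff $a_1a_2\in \cO_{Z,z}^\times$, iff both $a_1,a_2$ are units, iff both $s_1$ and $s_2$ generate at $z$.

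The only step requiring genuine care is the bookkeeping of the involution across the identification $\widetilde{\cL_1\otimes\cL_2}\cong\tilde\cL_1\otimes\tilde\cL_2$; everything else is essentially formal. Assembling these three pieces gives the equality of open subsets claimed in the lemma.
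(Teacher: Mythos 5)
Your proof is correct and follows essentially the same route as the paper: identify $\widetilde{\cL_1\otimes\cL_2}\cong\tilde\cL_1\otimes\tilde\cL_2$ with the involution acting as the tensor product of the two involutions, factor the defining section as $s=s_1\otimes s_2$, and conclude $Z_{s_1\otimes s_2}=Z_{s_1}\cap Z_{s_2}$. You merely spell out two verifications (multiplicativity of $*$ on decomposables and the stalkwise unit argument) that the paper asserts without proof.
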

\proof By construction one has $\widetilde{(\cL_1\otimes \cL_2)}=\tilde \cL_1\otimes \tilde \cL_2$ so that $\xi_j\otimes \eta_k\in \Gamma(Z_{},\widetilde{(\cL_1\otimes \cL_2)})$. Moreover the involution $*$ on  $\Gamma(Z_{},\widetilde{(\cL_1\otimes \cL_2)})$ is the tensor product of the two involutions.
Thus the sections in \eqref{invol2} are given by
\begin{equation*}
    s=\sum_{j,k} (\xi_j\otimes \eta_k)(\xi_j\otimes \eta_k)^*
    =\left(\sum_j \xi_j \xi_j^*\right)\otimes \left(\sum_k \eta_k \eta_k^*\right)=s_1\otimes s_2.
\end{equation*}
It follows that $\Omega(\cL_1\otimes \cL_2, \{\xi_j\otimes \eta_k\})=Z_s=Z_{s_1}\cap Z_{s_2}=\Omega(\cL_1, \{\xi_j\})\cap \Omega(\cL_2, \{\eta_k\})$.\endproof

\begin{defn}\label{starx} Let $X_{}$ be  a scheme over $\C$. We denote by $S(X_{})$ the smallest class of open subsets of $Z_{}=X_{}\times_\C X_{}^\sigma$ such that $Z_{}\in S(X_{})$ and  for any invertible sheaf $\cL$ on $X_{}$ and a family of global sections $\{\xi_j\}_{j\in J}\in \Gamma(Z_{},\tilde\cL)$ the following condition holds
\begin{equation}\label{induct}
  \exists W\in S(X_{}), \ W\subset \bigcup_j {Z_{}}_{\xi_j}\implies  \Omega(\cL, \{\xi_j\})\in S(X_{}).
\end{equation}
\end{defn}
One has $S(X_{})=\cup_{n\ge 0} S_n(X_{})$, where $S_n(X_{})$ is defined by induction: one takes $S_0(X_{})=\{Z_{}\}$ and defines
\begin{equation*}
    S_{n+1}(X_{})=\{\Omega(\cL, \{\xi_j\})\mid \exists  W\in S_n(X_{}), \ W\subset
    \bigcup_j {Z_{}}_{\xi_j}\}.
\end{equation*}
 If $\cL=\cO_{X_{}}$ and the family $\{\xi_j\}$ reduces to the constant section $\{1\}$, one gets $S_0(X_{})\subset S_1(X_{})$ and by induction, $S_n(X_{})\subset S_{n+1}(X_{})$.

\begin{lem} \label{yincl}
$(i)$~The class $S(X_{})$ is stable under finite intersections.

 $(ii)$~For any $W\in S(X_{})$:~
$
Y(\R)\subset  W(\C)
$.

$(iii)$~Any $W\in S(X_{})$ fulfills $\tilde \sigma (W)=W$.
\end{lem}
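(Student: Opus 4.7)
The plan is to establish the three claims by inductions on the depth $n$ at which $W$ appears in the filtration $S(X) = \bigcup_n S_n(X)$, with the base case $W = Z$ being trivial for all three. Assertion (iii) is in fact non-inductive: the involution $\xi \mapsto \xi^*$ on $\Gamma(Z, \tilde\cL)$ intertwines the action of $\tilde\sigma$ with anti-linear conjugation of scalars, so $s := \sum_j \xi_j \xi_j^*$ automatically satisfies $s^* = s$; its zero locus is therefore $\tilde\sigma$-stable, hence so is $W = Z_s$.

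For (i), I would handle pairwise intersections by induction on $n_1 + n_2$ where $W_i \in S_{n_i}(X)$. If some $n_i = 0$ then $W_i = Z$ and the intersection is the other set. Otherwise, writing $W_i = \Omega(\cL_i, \{\xi_j^{(i)}\}_{j \in J_i})$ with witnesses $W_i' \in S(X)$ of strictly lower depth, Lemma~\ref{yprod} gives
\[
W_1 \cap W_2 = \Omega\!\left(\cL_1 \otimes \cL_2, \{\xi_j^{(1)} \otimes \xi_k^{(2)}\}_{(j,k) \in J_1 \times J_2}\right).
\]
The natural candidate witness is $W' := W_1' \cap W_2'$, which lies in $S(X)$ by inductive hypothesis, and for any point $p \in W'$ one can find $j, k$ with $\xi_j^{(1)}(p) \neq 0$ and $\xi_k^{(2)}(p) \neq 0$, so that $(\xi_j^{(1)} \otimes \xi_k^{(2)})(p) \neq 0$; this shows $W' \subset \bigcup_{j,k} Z_{\xi_j^{(1)} \otimes \xi_k^{(2)}}$, as required by the defining condition of $S(X)$.

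For (ii), I would induct on $n$ with $W \in S_n(X)$. For $W = \Omega(\cL, \{\xi_j\})$ with witness $W' \in S_{n-1}(X)$, the inductive hypothesis gives $Y(\R) \subset W'(\C) \subset \bigcup_j Z_{\xi_j}(\C)$. A real point $y \in Y(\R)$ corresponds to a $\tilde\sigma$-fixed $\C$-point $(x,\bar x)$ of $Z = X \times_\C X^\sigma$, at which some $\xi_{j_0}(y) \neq 0$. To conclude $y \in Z_s(\C)$ I would pick a local generator $e$ of $\cL$ near $x$ and trivialise the fibre $\tilde\cL_y = \cL_x \otimes \overline{\cL_x}$ by the $*$-invariant element $e \otimes \bar e$. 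Writing $\xi_j(y) = z_j \cdot (e \otimes \bar e)$, the defining formula $(\xi \otimes \eta^\sigma)^* = \eta \otimes \xi^\sigma$ gives $\xi_j^*(y) = \bar z_j \cdot (e \otimes \bar e)$, and hence $(\xi_j\xi_j^*)(y) = |z_j|^2 \cdot (e \otimes \bar e)^{\otimes 2}$. Summing, $s(y) = \bigl(\sum_j |z_j|^2\bigr) \cdot (e \otimes \bar e)^{\otimes 2}$ is a nonzero generator of $\tilde\cL^{\otimes 2}_y$, so $y \in Z_s = W$.

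The main obstacle will be making this positivity identity completely rigorous: one has to verify that at a $\tilde\sigma$-fixed closed $\C$-point of $Z$ the fibre $\tilde\cL_y^{\otimes 2}$ inherits a canonical real structure from the involution $*$ and that, under any $*$-compatible trivialisation, evaluating $\xi\xi^*$ produces a nonnegative real scalar which vanishes only when $\xi$ does. This is essentially the local reduction to $z\bar z \geq 0$ in $\C$ sketched above, and once it is dispatched the three inductive arguments assemble into a complete proof.
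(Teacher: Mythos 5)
Your proposal is correct and follows essentially the same route as the paper: part (iii) via $s^*=s$, part (i) by intersecting the witnesses and invoking Lemma~\ref{yprod} together with $Z_{\xi}\cap Z_{\eta}=Z_{\xi\otimes\eta}$, and part (ii) by pulling back to the $\C$-point attached to a real point of $Y$ and reducing to the positivity $\sum_j a_j\bar a_j\neq 0$ in the one-dimensional fibre. The only difference is bookkeeping (induction on $n_1+n_2$ rather than showing each $S_n(X)$ is intersection-stable), and the ``main obstacle'' you flag is exactly the short fibrewise computation the paper carries out with a basis $e$ of the pulled-back line $L\otimes\bar L$.
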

\proof $(i)$~We show inductively that $S_n(X_{})$ is stable under finite intersections. The statement holds trivially for $n=0$. We assume $(i)$ holds for $S_n(X_{})$. Let $\Omega(\cL_1, \{\xi_j\})$,  $\Omega(\cL_2, \{\eta_k\})$
belong to $S_{n+1}(X_{})$. Then there exist $W_i\in S_n(X_{})$ ($i=1,2$) such that $W_1\subset \cup {Z_{}}_{\xi_j}$, $W_2\subset \cup {Z_{}}_{\eta_k}$.
Then, using the induction hypothesis  it follows that
\begin{equation*}
    W_1\cap W_2\in S_n(X_{}), \ \ W_1\cap W_2\subset \cup_{j,k} ({Z_{}}_{\xi_j}\cap {Z_{}}_{\eta_k})
    =\cup_{j,k} {Z_{}}_{\xi_j\otimes \eta_k}.
\end{equation*}
Then Lemma \ref{yprod} shows that $\Omega(\cL_1, \{\xi_j\})\cap \Omega(\cL_2, \{\eta_k\})$
belongs to $S_{n+1}(X_{})$.

$(ii)$~We again proceed by induction on $n$ for $W\in S_n(X_{})$. The case $n=0$ is clear
since by functoriality one has $Y(\R)\subset Y(\C)$. Assume to have shown that
  $
Y(\R)\subset  W(\C)
$ for any $W\in S_n(X_{})$, then we prove the statement for $\Omega(\cL, \{\xi_j\})\in S_{n+1}(X_{})$. Let $x\in Y(\R)$, then, since $Y(\R)=X(\C)$ there exists a morphism
$\chi\in \Hom(\Spec\C,X_{})$ such that the corresponding point of $Y(\C)=X(\C)\times X_{}^\sigma(\C)$ is given by $\chi\times (\chi\circ \sigma)$. By hypothesis, there exists $W\in S_n(X_{})$ such that $W\subset \cup {Z_{}}_{\xi_j}$ and so the induction hypothesis implies that one has
$\chi\times (\chi\circ \sigma)\in {Z_{}}_{\xi_{j_0}}(\C)$ for some index $j_0$. Pulling back the invertible sheaf $\cL$ by $\chi$ one gets a rank one $\C$-vector space $L$. Let $0\neq e\in L\otimes \bar L$ be a basis, then the pull back of the sections $\xi_j\in \Gamma(Z_{},\tilde\cL)$ are of the form $a_j e$ with $a_j\in \C$ and $a_{j_0}\neq 0$. It follows that
\begin{equation*}
    \left(\chi\times (\chi\circ \sigma)\right)^*\left(\sum_j \xi_j  \xi_j^*\right)
    =\left(\sum_j a_j \bar a_j\right)e e^*\neq 0
\end{equation*}
and thus $\chi\times (\chi\circ \sigma)\in \Omega(\cL, \{\xi_j\})(\C)$.

$(iii)$~This invariance under $\tilde \sigma$ holds for the subsets $\Omega(\cL, \{\xi_j\})$ because the section $s$ of \eqref{invol2} fulfills $s^*=s$ by construction.
 \endproof

Next statement provides the functorial construction of the (real: \cf Theorem \ref{affscheme}) scheme ${}_*X$ and the projection $p_X:({}_*X)_\C\to X_{}$.
\begin{prop}\label{functstar}
Let $X_{}$ be a scheme over $\C$. Then, the restriction of the first projection $p:X_{}\times_\C X_{}^\sigma\to X_{}$ to the intersection
\begin{equation*}\label{starXX}
    {}_*X:=\projlim_{W\in S(X_{})} W
\end{equation*}
defines a morphism $p_X:{}_*X\to X_{}$ of schemes which is surjective on complex points and depends functorially on $X_{}$. Moreover the functor $X_{}\mapsto {}_*X$ preserves direct sum, \ie
\begin{equation*}
    {}_*(X_{1}\oplus X_{2})={}_*X_{1}\oplus {}_*X_{2}.
\end{equation*}
\end{prop}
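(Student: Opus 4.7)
The plan is to unpack the four assertions of the proposition in sequence, leveraging the stability and containment properties of the class $S(X)$ established in Lemma~\ref{yincl}. Since $S(X)$ is closed under finite intersections, it is filtered under reverse inclusion, so ${}_*X = \projlim_{W\in S(X)} W$ is well-defined as a projective limit of open subschemes of $Z = X\times_\C X^\sigma$ along open immersions; the morphism $p_X:{}_*X\to X$ is then simply the restriction of the first projection $p:Z\to X$. For surjectivity on complex points, I recall that a $\C$-point of $X$ corresponds via the Weil restriction to an $\R$-point of $Y_\R$, embedded in $Z(\C)$ as the pair $(\chi,\chi\circ\sigma)$; by Lemma~\ref{yincl}(ii) this pair lies in every $W(\C)$, hence in ${}_*X(\C)$, and projects back to $\chi$, yielding surjectivity.

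Functoriality is handled by induction on the filtration $S(X) = \bigcup_n S_n(X)$. Given $f:X_1\to X_2$, set $\tilde f := f\times f^\sigma: Z_1\to Z_2$; the routine compatibilities $\tilde f^* \widetilde{\cL_2} = \widetilde{f^*\cL_2}$ and $\tilde f^*(\xi\xi^*) = (\tilde f^*\xi)(\tilde f^*\xi)^*$ show that if $W_2 = \Omega(\cL_2,\{\xi_j\}) \in S_{n+1}(X_2)$ is witnessed by $W_2'\in S_n(X_2)$ with $W_2'\subset\bigcup_j(Z_2)_{\xi_j}$, then any $W_1'\in S_n(X_1)$ contained in $\tilde f^{-1}(W_2')$ (which exists by the inductive hypothesis) witnesses that $\Omega(f^*\cL_2,\{\tilde f^*\xi_j\}) \in S_{n+1}(X_1)$, and this latter set lies in $\tilde f^{-1}(W_2)$. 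Taking the intersection over $W_2\in S(X_2)$ produces the required morphism ${}_*X_1\to {}_*X_2$, and functoriality in $f$ follows from the corresponding naturality of pullback.

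The direct sum property requires a concrete geometric construction. Writing $X = X_1\oplus X_2$ decomposes $Z$ into four pieces $Z_{ij} = X_i\times X_j^\sigma$, and the orthogonal idempotents $e_1,e_2\in\cO(X)$ associated to the decomposition produce global sections $\xi_i := p_{{\rm id}}^* e_i \in \Gamma(Z,\cO_Z)$ which together generate $\cO_Z$ (since $e_1+e_2=1$ forces $Z_{\xi_1}\cup Z_{\xi_2}=Z$, so the witness $W=Z\in S_0(X)$ is available) and whose associated positive combination
\[
\xi_1\xi_1^* + \xi_2\xi_2^* \;=\; p_{{\rm id}}^*e_1 \cdot p_\sigma^* e_1 + p_{{\rm id}}^*e_2\cdot p_\sigma^* e_2
\]
is nowhere vanishing on $Z_{11}\sqcup Z_{22}$ and identically zero on $Z_{12}\sqcup Z_{21}$. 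Thus $\Omega(\cO_X,\{\xi_1,\xi_2\}) = Z_{11}\sqcup Z_{22}$ already belongs to $S_1(X)$, forcing ${}_*X\subset Z_{11}\sqcup Z_{22}$. A parallel induction, extending data on $X_i$ back to $X$ via $\cL_i\mapsto \cL_i\oplus \cO_{X_{3-i}}$ and multiplying candidate sections by the idempotent $p_{{\rm id}}^*e_i$, identifies $S(X)|_{Z_{ii}}$ with $S(X_i)$ and delivers the equality ${}_*X = {}_*X_1\oplus {}_*X_2$, using Step~3 applied to the canonical inclusions for the reverse containment.

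The principal obstacle I foresee lies in this last identification of $S(X)|_{Z_{ii}}$ with $S(X_i)$, specifically the propagation of the witness condition \emph{$\exists W\in S_n(\cdot),\ W\subset\bigcup_j Z_{\xi_j}$} under both restriction to and extension from a direct summand. One must verify at each inductive step that extended sections actually generate the extended sheaf where required and that no spurious shrinkage occurs on the diagonal components $Z_{ii}$ when cutting back; a similar bookkeeping issue arises more mildly in Step~3, where one must check that the containment $W_1'\subset\tilde f^{-1}(W_2')$ transfers to a generating condition for the pulled-back sections $\tilde f^*\xi_j$. Once this bookkeeping is discharged, all four claims of the proposition follow from the construction.
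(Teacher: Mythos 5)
Your handling of the first three claims is essentially the paper's argument: the filtered intersection over $S(X)$, surjectivity on complex points via Lemma \ref{yincl}~$(ii)$, and the induction on the levels $S_n$ for functoriality all match. (On the point you flag in the functoriality step, the paper in fact proves the sharper identity $\left(f\times f^\sigma\right)^{-1}\Omega(\cL',\{\xi'_j\})=\Omega(f^*\cL',\{\left(f\times f^\sigma\right)^*\xi'_j\})$, and since $\left(f\times f^\sigma\right)^{-1}\bigl({Z'}_{\xi'_j}\bigr)=Z_{\left(f\times f^\sigma\right)^*\xi'_j}$ the generating condition transfers automatically, so that bookkeeping is harmless.) For additivity, your sections $\xi_i=p_{\rm id}^*e_i$ give the same $s=e_1\otimes e_1^\sigma+e_2\otimes e_2^\sigma$ as the paper's family $\xi_{ij}=e_i\otimes e_j^\sigma$, hence the same $\Omega=Z_{11}\sqcup Z_{22}\in S_1(X)$, where $Z_{ii}=X_i\times_\C X_i^\sigma$; the paper then leaves the identification ${}_*X={}_*X_1\oplus{}_*X_2$ as an unspecified easy check.

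It is exactly in that last check that your specific recipe would fail. To get ${}_*X\subseteq{}_*X_1\oplus{}_*X_2$ you must realize each $W_1=\Omega(\cL_1,\{\xi_j\})\in S(X_1)$ as the $Z_{11}$-part of an element of $S(X)$, and you propose to do so by extending $\cL_1$ to $\cL_1\oplus\cO_{X_2}$ and extending the sections by zero (multiplication by $p_{\rm id}^*e_1$). But the zero-extended sections have all their nonvanishing loci inside $Z_{11}$, so condition \eqref{induct} would require some $W\in S(X)$ with $W\subseteq Z_{11}$; by Lemma \ref{yincl}~$(ii)$ every $W\in S(X)$ contains all of $Y(\R)=X(\C)$, in particular the points lying over $X_2$, so no such $W$ exists once $X_2(\C)\neq\emptyset$, and your extended $\Omega$ never enters $S(X)$. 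The repair is to extend data from both summands simultaneously (equivalently, to adjoin to the one-sided family the complementary idempotent section $e_2\otimes e_2^\sigma$): for $W_i=\Omega(\cL_i,\{\xi^{(i)}_j\})\in S(X_i)$ take $\cL=\cL_1\oplus\cL_2$ on $X$ and the union of the two zero-extended families; the cross terms vanish, so the resulting $\Omega$ equals $W_1\sqcup W_2$, and the witness condition now closes inductively because the union of nonvanishing loci contains $W'_1\sqcup W'_2$ for witnesses $W'_i\in S(X_i)$, which lies in $S(X)$ by the same induction, with base case $Z_{11}\sqcup Z_{22}$. Combined with your restriction direction (functoriality applied to the inclusions $X_i\to X$, giving ${}_*X_i\subseteq{}_*X\cap Z_{ii}$), this completes the additivity; as literally written, however, the one-sided extension by zero is the step that breaks, and you flag it as an obstacle without supplying this fix.
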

\proof It follows from Lemma \ref{yincl} that the class $S(X_{})$ is stable under finite intersections and thus the open (complex) sub-schemes $W$ of $X_{}\times_\C X_{}^\sigma$ form a projective system under inclusion. The projection $p$ restricts to each $W$ in a compatible manner, and Lemma \ref{yincl} implies that it is surjective on complex points. We check that this construction is functorial. Let $f:X_{}\to X'_{}$ be a morphism of complex schemes. Then the morphism $f\times f^\sigma:  X_{}\times_\C X_{}^\sigma\to X_{}'\times_\C {X'_{}}^\sigma$
is compatible with the first projection and the morphism $f:X_{}\to X'_{}$. It remains to show that for any $W'\in S(X'_{})$ there exists $W\in S(X_{})$ such that $(f\times f^\sigma)(W)\subset W'$.
For any invertible sheaf $\cL'$ on $X'_{}$ and sections $\xi'_j\in \Gamma(Z'_{},\tilde\cL')$, one has:~
$\left(f\times f^\sigma\right)^{-1} \Omega(\cL', \{\xi'_j\})=\Omega(\cL, \{\xi_j\})$,
where $\cL=f^*(\cL')$ is the inverse image of $\cL'$ and $\xi_j=\left(f\times f^\sigma\right)^*\xi'_j$. One then shows by induction on $n$ that
\begin{equation*}
    W'\in S_n(X'_{})\implies \left(f\times f^\sigma\right)^{-1}(W')\in S_n(X_{}).
\end{equation*}
Now let $X_{}=X_{1}\oplus X_{2}$ be the disjoint union of two schemes over $\C$. Let $e_j\in \cO(X_{})$ ($j=1,2$) be the two idempotent sections corresponding to $X_{j}$. One has $e_1+e_2=1$ and $e_1e_2=0$.  Consider the invertible sheaf $\cL=\cO_{X_{}}$ on $X_{}$. Then the tensor products $\xi_{ij}=e_i\otimes e_j^\sigma$ are sections of $\tilde \cL$ on $Z_{}=X_{}\times_\C X_{}^\sigma$. One has $\xi_{12}\xi_{12}^*=0$, $\xi_{21}\xi_{21}^*=0$  and
\begin{equation*}
    \cup_{i,j} Z_{\xi_{ij}}=Z,\ \ \Omega(\cL, \{\xi_{ij}\})=\left(X_{1}\times X_{1}^\sigma \right)\cup
   \left( X_{2}\times X_{2}^\sigma\right)\subset Z_{}.
\end{equation*}
One then easily checks that ${}_*X={}_*X_1\oplus {}_*X_2$.
\endproof
The additivity of the functor $X_{}\mapsto {}_*X$  holds because the star-localization eliminates the cross terms. Note that additivity fails instead for the Weil restriction.

\subsection{The morphism $\pi_X:\Spec(\cO(X_\sss))\to X$}\label{sectmorphis}
Next, we show that the functor $X\mapsto {}_*X$ transforms projective complex schemes  into {\em real affine} schemes
\begin{thm}\label{affscheme}
Let $X$ be a projective complex variety.  Then

$(i)$~The scheme ${}_*X$ is affine and real.

$(ii)$~The morphism $p_X: ({}_*X)_\C\to X$ is bijective on complex points, \ie $p_X$ induces a bijection ${}_*X(\C)\cong X(\C)$

$(iii)$~If $X$ is also smooth, the morphism of locally ringed space $X_\sss\to X_\aaa$ extends canonically to a morphism of schemes
 \begin{equation*}
 \pi_X:\Spec(\cO(X_\sss))\stackrel{\gamma^*}{\to} ({}_*X)_\C \stackrel{p_X}{\to} X
\end{equation*}
 where $\gamma: \cO({}_*X)\to \cO(X_\sss)=C^\infty(X_\sss,\C)$ is  the  Gelfand transform of the algebra
 $\cO({}_*X)$ of global regular sections of the affine scheme ${}_*X$:
\begin{equation*}
    \cO({}_*X)\ni a\mapsto \gamma(a)=\hat a, \ \ \hat a(\chi):=\chi(a)\qqq \chi \in \Hom_\C(\cO({}_*X),\C).
\end{equation*}
\end{thm}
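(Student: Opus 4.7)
The plan is to address the three parts in order, since $(ii)$ and $(iii)$ both build on the affineness established in $(i)$.

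For $(i)$, I would first exhibit one explicit affine member of $S(X)$ using the projectivity of $X$, then invoke Noetherianity to cut the full intersection defining ${}_*X$ down to a finite one. Fix a very ample invertible sheaf $\cL$ on $X$ with global sections $s_0,\dots,s_N$ giving a closed embedding $X\hookrightarrow\P^N_\C$. The family $\xi_{ij}=s_i\otimes s_j^\sigma\in\Gamma(Z,\tilde\cL)$ has no common zero on $Z$, so the hypothesis of \eqref{induct} holds with $W=Z\in S_0(X)$ and $\Omega_0:=\Omega(\cL,\{\xi_{ij}\})$ lies in $S_1(X)$. Pulling back along the Segre embedding $\P^N\times\P^N\hookrightarrow\P^{(N+1)^2-1}$, the Hermitian section $\sum_{ij}\xi_{ij}\xi_{ij}^*$ becomes proportional to $(\sum_ix_iy_i)^2$, so $\Omega_0$ is the intersection of the projective $Z$ with a principal affine open and hence affine. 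Noetherianity of $Z$ and the stability of $S(X)$ under finite intersection (Lemma~\ref{yincl}$(i)$) force the filtered family $\{W:W\in S(X)\}$ to stabilise, so ${}_*X$ is realised as a finite intersection of affines in the separated scheme $Z$, hence affine. The $\tilde\sigma$-stability from Lemma~\ref{yincl}$(iii)$ together with Galois descent along $\C/\R$ then produces the real structure.

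For $(ii)$, the inclusion $X(\C)=Y(\R)\subset({}_*X)_\C(\C)$ is Lemma~\ref{yincl}$(ii)$. For the reverse inclusion, I need to show that every off-diagonal $z_0=(x_1,x_2)\in Z(\C)$ (i.e.\ $x_2\neq\bar x_1$) is excluded from some $W\in S(X)$. A direct computation on basic sections $\xi=s\otimes t^\sigma$ yields the key formula
\[
\xi^*(z)=\overline{\xi(\tilde\sigma(z))},\qquad z\in Z(\C),
\]
and hence $\sum_j\xi_j\xi_j^*(z_0)=\sum_j\xi_j(z_0)\overline{\xi_j(\bar z_0)}$, where $\bar z_0:=\tilde\sigma(z_0)$ is distinct from $z_0$ by the off-diagonal hypothesis. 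Choose a high enough tensor power of $\cL$ so that sections of $\tilde\cL$ separate the two points $\{z_0,\bar z_0\}$ and prescribe any values at them. Within the affine subspace of tuples $(\xi_1,\dots,\xi_k)\in\Gamma(Z,\tilde\cL)^k$ satisfying the single complex linear condition $\sum_j\xi_j(z_0)\overline{\xi_j(\bar z_0)}=0$, a generic tuple (for $k>\dim Z$ and $\tilde\cL$ sufficiently ample) has no common zero on $Z$. Then $\Omega(\cL,\{\xi_j\})\in S_1(X)$ while $z_0\notin\Omega(\cL,\{\xi_j\})$ by construction, so $z_0\notin{}_*X$.

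For $(iii)$, the bijection $X_\sss\cong({}_*X)_\C(\C)$ from $(ii)$ associates to each $x\in X_\sss$ the $\R$-algebra character $\chi_x:\cO({}_*X)\to\C$ corresponding to the diagonal point $(x,\bar x)\in Z(\C)$. The Gelfand transform $\gamma(a)(x):=\chi_x(a)$ is real-analytic on $X_\sss$ because the diagonal embedding $X_\sss\hookrightarrow Z_\ann$ is real-analytic and algebraic sections of the affine scheme ${}_*X$ analytify on $Z_\ann$; thus $\gamma$ lands in $C^\infty(X_\sss,\C)$. The induced morphism $\gamma^*:\Spec(C^\infty(X_\sss,\C))\to({}_*X)_\C$ composes with $p_X$ to give $\pi_X$. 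Compactness of $X_\sss$ (from projectivity of $X$) ensures that every maximal ideal of $C^\infty(X_\sss,\C)$ with residue field $\C$ is evaluation at some $x\in X_\sss$, so restricting $\pi_X$ to these closed points recovers $\mu:X_\sss\to X_\aaa$, confirming that $\pi_X$ extends $\mu$ as claimed. The hardest step will be $(ii)$: balancing the cancellation condition against the covering hypothesis of \eqref{induct}. The cancellation is a single complex linear condition on the section tuple, while the no-common-zero requirement is Zariski open in $\Gamma(Z,\tilde\cL)^k$; a dimension count for very ample $\tilde\cL$ shows these loci meet once $k>\dim Z$, but verifying this effectively without circularity (given that $S(X)$ is itself defined by such coverings) requires care.
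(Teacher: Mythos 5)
You have a genuine error in part $(i)$: the Noetherian stabilization step is false. Noetherianity of $Z$ gives the ascending chain condition on open subsets (equivalently DCC on closed subsets), not the descending chain condition on opens, so the filtered family $S(X)$ under intersection need not stabilize; and in fact it provably does not once $\dim X>0$. Indeed, any finite intersection of members of $S(X)$ is again a member of $S(X)$ (Lemma \ref{yincl}$(i)$), hence a nonempty Zariski open subset of $Z$ containing $Y(\R)$, and its set of complex points is then strictly larger than the anti-diagonal $Y(\R)\cong X(\C)$ (which has empty interior); so if ${}_*X$ were a finite intersection of the $W$'s, part $(ii)$ would fail. The correct route, which is the paper's, keeps your Segre computation ($s=t^2$ with $t=\sum_i\alpha_i\otimes\alpha_i^\sigma$, so $W_0=Z_t$ is affine) but then shows that for \emph{every} $W\in S(X)$ the intersection $W_0\cap W$ is of the form $(W_0)_{s'}$, the locus inside the affine $W_0$ where a section of an invertible sheaf generates, hence affine; since the $W_0\cap W$ are cofinal in $S(X)$ and all transition maps are morphisms of affine schemes, ${}_*X=\projlim_W (W_0\cap W)$ is a filtered limit of affines and therefore affine. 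The real structure then comes from Lemma \ref{yincl}$(iii)$ as you say.

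Your part $(ii)$ is a genuinely different route from the paper's, but as written it is not yet a proof. The paper never excludes off-diagonal points one by one: it shows that a complex point of ${}_*X$, viewed as a character $\chi$ of the involutive algebra $\cO(Z_s)$, satisfies $\chi(x^*)=\overline{\chi(x)}$, by a multiplicative-cone positivity argument whose key input is that $Z_f\in S(X)$ for $f=1+\sum_j a_ja_j^*$ (Liu's extension lemma produces global sections $\eta_j=\widetilde{a_js^n}$, and including $\eta_0=s^n$ guarantees the covering hypothesis \eqref{induct} with witness $Z_s$); self-adjoint characters are exactly the anti-diagonal points. In your approach, the cancellation $\sum_j\xi_j(z_0)\overline{\xi_j(\tilde\sigma(z_0))}=0$ is \emph{not} a single complex linear condition on the tuple: it is sesquilinear, i.e.\ two real quadratic equations, so the "generic tuple in a linear slice" dimension count does not apply as stated, and you flag the circularity issue yourself without resolving it. The gap is fixable by an explicit witness rather than genericity: since $z_0\neq\tilde\sigma(z_0)$, for a sufficiently high power of $\cL$ choose $\xi_1,\xi_2$ with prescribed nonzero values at both points whose two contributions cancel, and then adjoin sections $\xi_3,\dots,\xi_k$ vanishing at $z_0$ and $\tilde\sigma(z_0)$ with no common zero on $\{\xi_1=\xi_2=0\}$; then $\bigcup_j Z_{\xi_j}=Z$, so \eqref{induct} holds with $W=Z\in S_0(X)$, while $z_0\notin\Omega(\cL,\{\xi_j\})$. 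With $(i)$ and $(ii)$ repaired, your $(iii)$ goes through essentially as in the paper (regular functions on opens of $Z$ restrict to real-analytic, hence smooth, functions on the anti-diagonal), but you should also verify the extension of $\mu$ at the level of structure sheaves, not only on closed points.
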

\begin{proof}
 $(i)$~We show that the scheme ${}_*X$ is affine if $X$ is projective. Let $\iota:X\to \P^n_{\C}$ be a closed immersion and $\cL=\iota^*(\cO_{\P^n}(1))$. Let $\{\alpha_i\}\in \Gamma(X,\cL)$ be the set of pullback sections $\alpha_i=\iota^*(x_i)$ of the global sections $x_i\in\Gamma(\P^n,\cO(1))$, $i=0,\ldots, n$. Let $\xi_{ij}=\alpha_i\otimes \alpha_j^\sigma\in \Gamma(Z,\tilde\cL)$.
 One has $\cup_{i,j} Z_{\xi_{ij}}=Z$, so that $W_0=\Omega(\cL, \{\xi_{ij}\})\in S(X)$. Moreover,
  by definition one has
\begin{equation*}
\Omega(\cL, \{\xi_{ij}\})=Z_s,\qquad  s:=\sum_{i,j} \xi_{ij}\, \xi_{ij}^*\in \Gamma(Z,\tilde\cL^{\otimes 2}).
\end{equation*}
 In fact, one has
 \begin{equation*}
    s=\sum_{i,j} (\alpha_i\otimes \alpha_j^\sigma)(\alpha_j\otimes \alpha_i^\sigma)
    =\sum_{i,j} (\alpha_i\alpha_j)\otimes (\alpha_j^\sigma\alpha_i^\sigma)
    =\left(\sum_i \alpha_i\otimes \alpha_i^\sigma\right)
    \left(\sum_j \alpha_j\otimes \alpha_j^\sigma\right)=t^2
 \end{equation*}
 where $t=\sum_i \alpha_i\otimes \alpha_i^\sigma\in \Gamma(Z,\tilde\cL)$ and thus $W_0=\Omega(\cL, \{\xi_{ij}\})=Z_s=Z_t$. Next we show that $Z_t$ is affine.  Consider the Segre embedding $\tau: \P_\C^n\times \P_\C^n\to \P_\C^{n^2+2n}$
 which is given in terms of homogeneous coordinates by
 \begin{equation*}
    \tau((x_i), (y_j))= (x_iy_j)=\tau(x,y)_{i,j}, \ \ i,j\in \{0,\ldots, n\}.
 \end{equation*}
 The open set $Z_t\subset Z$ is, by construction, the inverse image by the morphism $\tau\circ(\iota\times \iota^\sigma)$ of the complement of the hyperplane
 \begin{equation*}
    H=\{(X_{i,j})\mid \sum_i X_{i,i}=0\}\subset \P_\C^{n^2+2n}
 \end{equation*}
 thus $W_0=\Omega(\cL, \{\xi_j\})=Z_s=Z_t$ is affine.  To show that ${}_*X$ is affine it is enough to show that $W_0\cap W$ is affine for any $W\in S(X)$ but this follows from standard geometric facts since $W_0\cap W$ is of the form $(W_0)_{s'}$ where $s'$ is a section of an invertible
sheaf on $W_0$. Finally by $(iii)$ of Lemma \ref{yincl},  each of the $W\in S(X)$ is invariant under $\tilde \sigma$. This shows that the algebras $\cO(W)$ inherit a natural antilinear involution $*$ and so does their projective limit $\cO({}_*X)$. Thus this algebra is the complexification of the real algebra of fixed points of the involution, thus it follows  that the scheme ${}_*X$ is real.

 $(ii)$~We show that the morphism $p_X$ is bijective on complex points. We already know that it is surjective. Let us consider a complex point $u\in {}_*X(\C)$. Then one gets a point $u\in Z(\C)=X(\C)\times X^\sigma(\C)$ which belongs to $W(\C)$ for any $W\in S(X)$. Let $W_0$ be  as in the proof of $(i)$. The point $u$ determines a character $\chi\in \Hom_\C(B,\C)$ of the involutive algebra $B=\cO(Z_s)$ and it is enough to show that this character fulfills the identity
\begin{equation}\label{selfa}
   \chi(x^*)=\overline{\chi(x)}\qqq x\in B.
\end{equation}
This will in turn follow if one shows that
\begin{equation}\label{selfa1}
    \chi(1+\sum_j a_j a_j^*)\neq 0\qqq a_j\in B.
\end{equation}
The set of complex numbers $\chi(\epsilon+\sum_j a_j a_j^*)$, $\epsilon>0$, $a_j\in B$, is a multiplicative cone, thus it reduces to $(0,\infty)$ unless it contains $0$. Moreover one has
 \begin{equation*}
    \chi(1+x x^*)\in (0,\infty)~~\forall x\in B\implies \chi(x^*)=\overline{\chi(x)}~~\forall x\in B.
 \end{equation*}
 To show \eqref{selfa1} it is enough to prove that with $f=1+\sum_j a_j a_j^*\in B$, one has $Z_f\in S(X)$.
Lemma 1.25 of \cite{Liu} applied to this context shows  that for any $a\in B$ there exists an integer $n_0>0$ such that for all $n\geq n_0$ the section $a\otimes (s^n_{|Z_s})\in \Gamma(Z_s,\tilde\cL^{\otimes 2n})$ extends to a global section $\widetilde{a s^n}\in \Gamma(Z,\tilde\cL^{\otimes 2n})$. Thus,  there exists $n\in \N$ such that all the sections
 $a_j\otimes (s^n_{|Z_s})\in \Gamma(Z_s,\tilde\cL^{\otimes 2n})$ extend to global sections
 $\eta_j=\widetilde{a_j s^n}\in \Gamma(Z,\tilde\cL^{\otimes 2n})$. We include $\eta_0=s^n$ in the list of such $\eta_j$. Then $\Omega(\cL, \{\eta_j\})\cap Z_s=Z_f\subset Z_s$
 follows from the equality
 \begin{equation*}
    \sum_j \eta_j\eta_j^*=\left(1+\sum_j a_j a_j^*\right) s^{2n}\in \Gamma(Z_s,\tilde\cL^{\otimes 4n})
 \end{equation*}
and since $\eta_0=s^n$ one has
$
   \cup_j Z_{\eta_j}\supset Z_s\in S(X)
$
and using \eqref{induct}  $Z_f\in S(X)$.

$(iii)$~Let $B=\cO({}_*X)$. The Gelfand transform
\begin{equation*}
    B\ni a\mapsto \gamma(a)=\hat a, \ \ \hat a(\chi)=\chi(a)\qqq \chi \in \Hom_\C(B,\C)
\end{equation*}
defines a canonical algebra homomorphism $\gamma:B\to C^\infty(X_\sss,\C)=\cO(X_\sss)$. By  the proof of $(ii)$, the elements of $\Hom_\C(B,\C)$ fulfill \eqref{selfa}, and this shows that the homomorphism $\gamma$ is compatible with the involutions. It remains to show that the morphism of schemes
$p_X\circ\gamma^*: \Sp( \cO(X_\sss))\to X$ extends the morphism of locally ringed space $\mu:X_\sss\to X_\aaa$. The underlying topological space of the locally ringed space $X_\aaa$ is the set of complex points $X(\C)$ of the algebraic variety endowed with the Zariski topology. The sheaf $\cO=\cO_X$ associates to each Zariski open set $U$ the ring $\cO(U)$ of regular functions from $U$ to $\C$. The weakest topology making these functions continuous is the ordinary topology and the map $\mu$ is the identity on points and is the inclusion $\cO(U)\subset \cO_{X_\sss}(U)$ at the level of the structure sheaves.    The locally ringed space $X_\aaa$ is the set of closed points of the algebraic scheme $X$ with the induced topology and the restriction of the structure sheaf. Note that the map which to an open set of $X$ associates its intersection with $X_\aaa$ gives an isomorphism on the categories of open sets so that the sheaves on $X$ and on $X_\aaa$ are in one to one correspondence. This simple relation fails when comparing $X_\sss$ with $\Spec(\cO(X_\sss))$. Let $\iota:X_\sss\to \Spec(\cO(X_\sss))$ be the map which associates to $x\in X_\sss$ the kernel of the character $\cO(X_\sss)\ni f\mapsto f(x)$. It is a bijection of $X_\sss$ with the set of closed points of $\Spec(\cO(X_\sss))$. It is continuous since the open sets as $D(f)$ form a basis of the topology of $\Spec(\cO(X_\sss))$, and $\iota^{-1}(D(f))$ is the open subset $\{x\in X_\sss\mid f(x)\neq 0\}\subset X_\sss$. Moreover $\iota$ defines a morphism of locally ringed spaces since for every open set $W\subset \Spec(\cO(X_\sss))$ the elements of $\cO(W)$ give, by composition with $\iota$, elements of $C^\infty(W,\C)=\Gamma(W,\cO_{X_\sss})$. One then easily checks that the composition
\begin{equation*}
    X_\sss\stackrel{\iota}{\to} \Spec(\cO(X_\sss))
    \stackrel{\gamma^*}{\to} ({}_*X)_\C \stackrel{p_X}{\to} X
\end{equation*}
maps to the closed points of $X$ and is equal to the morphism $\mu$.
 \end{proof}
We end this appendix with the following result that shows how to recover the locally ringed space $X_\sss$ from the affine scheme $\Spec(\cO(X_\sss))$.
\begin{prop} \label{scheme} Let $V$ be a smooth and compact manifold, $V_\sss$  the locally ringed space such that $\cO_{V_\sss}(U)=C^\infty(U,\C)$ for every open subset $U\subset V$ and $V_\aff=\Spec(C^\infty(V,\C))$ the affine scheme over $\C$ associated to its algebra of smooth functions.

$(i)$~The map $\iota$ which to $x\in V$ associates the kernel of the character $\cO(V_\sss)\ni f\mapsto f(x)$ is a bijection of $V$ with the maximal ideals of the $\C$-algebra $\cO(V_\sss)$.

$(ii)$~For each point $\ffp\in V_\aff$ there exists a unique maximal ideal $m(\ffp)\in V$ containing the prime ideal $\ffp$.

$(iii)$~The map $m:V_\aff=\Spec(\cO(V_\sss))\to V$, $\ffp\mapsto m(\ffp)$ is continuous and the direct image sheaf $m_*(\cO_{V_\aff})$ of the structure sheaf of $V_\aff$ is the sheaf $\cO_{V_\sss}$.

$(iv)$~The morphism of locally ringed spaces $m\circ \iota: V\to V$ is the identity.
\end{prop}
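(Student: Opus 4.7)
The plan is to prove the four items in sequence, with the principal work concentrated in (i)--(ii) where the classical bump-function/partition-of-unity toolkit on a compact smooth manifold does most of the job, and then to bootstrap (iii) and (iv) from what (i) and (ii) establish.

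For (i), I would first observe that $\iota$ maps $x$ to the maximal ideal $m_x=\{f\in C^\infty(V,\C)\mid f(x)=0\}$, so the point is to show every maximal ideal $M$ of $C^\infty(V,\C)$ has this form. The argument is the standard one: if $Z(M)=\{x\in V\mid f(x)=0~\forall f\in M\}$ were empty, then for each $x\in V$ one could choose $f_x\in M$ with $f_x(x)\neq 0$; by compactness finitely many of the open sets $\{f_{x_i}\neq 0\}$ cover $V$, and since $M$ is an ideal, $g=\sum_i f_{x_i}\overline{f_{x_i}}\in M$ is everywhere positive and hence invertible in $C^\infty(V,\C)$, forcing $M$ to be the unit ideal. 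Thus $Z(M)$ contains some $x$, so $M\subset m_x$, and maximality gives $M=m_x$.

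For (ii), I would run the same compactness argument to see that $Z(\ffp)$ is non-empty for any proper prime $\ffp$, and then prove it is a singleton by primality: if distinct $x,y\in Z(\ffp)$, choose bump functions $\phi,\psi\in C^\infty(V,\R)$ with disjoint supports, $\phi(x)=\psi(y)=1$; then $\phi\psi\equiv 0\in\ffp$ while neither $\phi$ nor $\psi$ lies in $\ffp$ (each is non-zero at a point of $Z(\ffp)$), a contradiction. Hence $Z(\ffp)=\{m(\ffp)\}$, and by construction $\ffp\subset m_{m(\ffp)}$; any maximal ideal containing $\ffp$ is of the form $m_y$ by (i) with $y\in Z(\ffp)$, so it must coincide with $m_{m(\ffp)}$, giving both existence and uniqueness.

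For (iii), the key technical lemma I would extract is that if $g\in C^\infty(V,\C)$ vanishes on a neighborhood of $m(\ffq)$, then $g\in\ffq$: pick a bump $h$ with $h(m(\ffq))=1$ and support inside this neighborhood, whence $gh\equiv 0\in\ffq$ while $h\notin\ffq$ (it does not vanish at $m(\ffq)$), so primality forces $g\in\ffq$. Continuity of $m$ then follows from the basis $\{D(f)\}$ of $V_\aff$: given an open $U\subset V$ and $\ffp\in m^{-1}(U)$, choose $f\in C^\infty(V,\C)$ with $\mathrm{supp}(f)\subset U$ and $f(m(\ffp))\neq 0$; then $\ffp\in D(f)$, and for any $\ffq\in D(f)$ the point $m(\ffq)$ cannot lie outside $\mathrm{supp}(f)$ (else $f$ would vanish near $m(\ffq)$ and hence lie in $\ffq$ by the lemma), so $m(\ffq)\in U$. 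For the sheaf identification, exhaust $m^{-1}(U)$ by such basic opens $D(f_i)$ with $\mathrm{supp}(f_i)\subset U$; the lemma shows that on $D(f)$ the functions which are annihilated by some power of $f$ are precisely those vanishing near $\mathrm{supp}(f)$, so the localization $C^\infty(V,\C)[f^{-1}]$ is naturally identified with $C^\infty(\{f\neq 0\},\C)$. Sheafifying this identification and gluing over the cover yields $m_*\cO_{V_\aff}(U)=C^\infty(U,\C)=\cO_{V_\sss}(U)$. Finally (iv) is immediate: $\iota(x)=m_x$ is already maximal, so the unique maximal ideal above it is itself, and by (i) this corresponds back to $x$.

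The main obstacle I anticipate is the sheaf identification in (iii), specifically checking that the localization $C^\infty(V,\C)[f^{-1}]$ coincides with $C^\infty(\{f\neq 0\},\C)$; the surjectivity here rests on the smooth Urysohn-type fact that any $g\in C^\infty(\{f\neq 0\},\C)$ can, after multiplication by a suitable power of $f$ (using that $f$ vanishes to finite order on compact subsets of the open set but may vanish to infinite order near its boundary, which is handled by the flat-function construction), be extended smoothly to all of $V$. Once this is in place, gluing via the localization lemma is routine.
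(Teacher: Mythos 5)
Items (i) and (ii), and the continuity part of (iii), are correct and essentially the paper's argument; you even make explicit the key bump-function lemma (``if $f$ has zero germ at $m(\ffq)$ then $f\in\ffq$'') that the paper uses implicitly. The genuine gap is in your sheaf identification. The claim that the localization $C^\infty(V,\C)[f^{-1}]$ coincides with $C^\infty(\{f\neq0\},\C)$ is false whenever the zero set of $f$ meets the closure of $\{f\neq0\}$ (which is exactly the situation for the $f_j$ in your cover, since they are compactly supported in $U$). Indeed the image of $\cO_{V_\aff}(D(f))=C^\infty(V,\C)_f$ in $C^\infty(\{f\neq0\},\C)$ consists only of those $h$ such that $f^nh$ agrees on $\{f\neq0\}$ with a global smooth function for some $n$; but $h=e^{1/(f\bar f)}$ is smooth on $\{f\neq0\}$ while $|f|^{n}h$ is unbounded near every zero of $f$ lying in the closure of $\{f\neq0\}$, so no power of $f$ brings $h$ into $C^\infty(V,\C)$. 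Consequently the ``flat-function'' extension you propose as a fix cannot exist: the obstacle you flagged is an actual counterexample to your lemma, not a removable technicality. (Your parenthetical characterization of the $f$-torsion as the functions vanishing near the support of $f$ is also slightly off --- the kernel is the functions vanishing on $\{f\neq0\}$ --- but that part is harmless for injectivity.)

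The statement you actually need is weaker, and this is how the paper proceeds: one never identifies $\cO_{V_\aff}(D(f))$ with $C^\infty(\{f\neq0\},\C)$. Instead one defines ${\rm res}:\cO_{V_\aff}(m^{-1}(U))\to C^\infty(U,\C)$ by evaluating local fractions at maximal ideals. Injectivity is your kernel computation (if $g/f^n$ restricts to $0$ on $\{f\neq0\}$ then $fg=0$, so the fraction vanishes in the localization). For surjectivity one exploits the shape of the cover: choose $f_j\in C^\infty(V,\C)$ with support contained in $U$ and with $\{f_j\neq0\}$ covering $U$; given $h\in C^\infty(U,\C)$, the product $hf_j$ extends by zero to a global smooth function, and $s_j=(hf_j)/f_j\in\cO(V_\sss)_{f_j}$ restricts to $h$ on $\{f_j\neq0\}$. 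These fractions agree on $D(f_if_j)$ because $(hf_i)f_j-(hf_j)f_i$ vanishes identically on $V$, so they glue to a section of $\cO_{V_\aff}(m^{-1}(U))$ mapping to $h$. Only first powers of the $f_j$ occur and no extension problem ever arises. Finally, note that (iv) asserts an identity of morphisms of \emph{locally ringed spaces}: besides the point-level statement, which you prove, one must check that the composite is the identity on structure sheaves; with ${\rm res}$ as above this is immediate, but your ``(iv) is immediate'' covers only the underlying map of spaces.
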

\proof One checks $(ii)$ by using a partition of unity argument. Moreover $(ii)$  implies $(i)$.

$(iii)$~We show that $m$ is continuous. Let $U\subset V$ be an open set and
 let $\tilde U=m^{-1}(U)$. Each point $\ffp\in \tilde U$ is a prime ideal contained in a maximal ideal $\ffm=x\in U$. Let then $f\in \cO(V_\sss)$ be a smooth function on $V$ with support contained in $U$ and such that $f(x)\neq 0$. For any prime ideal $\ffq\in D(f)\subset V_\aff$ one has $m(\ffq)\in {\rm Support}(f)$. Indeed, if that is not the case the germ of $f$ around $m(\ffq)$ is zero and thus $f\in \ffq$. This shows that $\tilde U=m^{-1}(U)$ is open in $\Spec(\cO(V_\sss))$.  The elements  $s\in\cO_{V_\aff}(\tilde U)$ are obtained from a covering of $\tilde U=m^{-1}(U)$ by open subsets $D(f_j)
\subset m^{-1}(U)$ and for each $D(f_j)$ an element $s_j$ of the ring $\cO(V_\sss)_{f_j}$ of fractions
with denominator a power of $f_j$, with the condition that these elements agree on the pairwise intersections. Thus to such an $s\in\cO_{V_\aff}(\tilde U)$ one can associate a function $s|_U$ by restriction to the subset of maximal ideals, and this function is smooth. The obtained map ${\rm res}:\cO_{V_\aff}(\tilde U)\to C^\infty(U,\C)=\cO_{V_\sss}(U)$ is surjective. Indeed, one can cover $U$ by open sets $U_j=\{x\in U\mid f_j(x)\neq 0\}$ where each $f_j\in C^\infty(U)$ has compact support in $U$, thus  any $h\in C^\infty(U,\C)$ is obtained from the $s_j=hf_j/f_j\in \cO(V_\sss)_{f_j}$. The map ${\rm res}:\cO_{V_\aff}(\tilde U)\to C^\infty(U,\C)$ is injective since any element of $\cO_{V_\aff}(D(f))=\cO(V_\sss)_f$ is determined by its restriction to maximal ideals.

$(iv)$~One just needs to check that $m\circ \iota$ is the identity at the level of structure sheaves and this follows from the proof of $(iii)$.
\endproof

 \begin{rem}{\rm $(i)$~As soon as the dimension of $X$ is non zero, the image of the morphism of schemes
 $\pi_X:\Spec(\cO(X_\sss))
     \to X$
     contains a non-closed point. This shows in particular that this morphism does not factorize through the morphism of locally ringed spaces $X_\sss\to X_\aaa$.

     $(ii)$~Theorem \ref{affscheme} $(iii)$, shows that the associated map of periodic cyclic homology $p_X^*: HP_*(X)\to HP_*(({}_*X)_\C)$ is injective. It is in general far from surjective.
}\end{rem}

\end{document}